\documentclass[12pt]{article}

 \usepackage{amsmath}
 \usepackage{amssymb}
 \usepackage{amsfonts}
 \usepackage{graphicx}
 \usepackage{amsthm}

\newcommand{\N}{\mathbb{N}}
\newcommand{\Z}{\mathbb{Z}}
\newcommand{\cX}{\mathcal{X}}
\newcommand{\cW}{\mathcal{W}}
\newcommand{\tcW}{\widetilde{\mathcal{W}}}
\newcommand{\tW}{\tilde{W}}
\newcommand{\bH}{\mathbb{H}}
\renewcommand{\Pr}{\mathbb{P}}
\newcommand{\X}{\mathcal{X}}
\newcommand{\cF}{\mathcal{F}}
\newcommand{\E}{\mathbb E \,}
\renewcommand{\emptyset}{\varnothing}

\newcommand{\diam}{{\rm diam}}
\newcommand{\F}{{\cal F}}
\newcommand{\tN}{{\tilde{N}}}
\newcommand{\tnu}{{\tilde{\nu}}}
\newcommand{\tR}{{\tilde{R}}}
\newcommand{\eps}{\varepsilon}
\def\bea{\begin{eqnarray}}
\def\eea{\end{eqnarray}}
\renewcommand{\epsilon}{\varepsilon}
\newcommand{\R}{\mathbb{R}}

\newcommand{\dist}{{\rm dist}}
\renewcommand{\log}{{\ln \:}}

\newtheorem{theo}{Theorem}

\newtheorem{prop}{Proposition}
\newtheorem{lemm}{Lemma}
\newtheorem{defn}{Definition}

\newcommand{\eqco}{\setcounter{equation}{0}}
\newcommand{\thco}{\setcounter{theo}{0}}
\newcommand{\prco}{\setcounter{prop}{0}}
\newcommand{\laco}{\setcounter{lemm}{0}}
\newcommand{\coco}{\setcounter{coro}{0}}
\newcommand{\cjco}{\setcounter{conj}{0}}

\newcommand{\deco}{\setcounter{defn}{0}}
\newcommand{\allco}{\eqco  \thco \prco \laco \coco \cjco \deco}

\begin{document}


\author{
Mathew D. Penrose
         \thanks{Department of Mathematical Sciences, University of Bath, Bath, BA2 7AY United Kingdom: {\texttt m.d.penrose@bath.ac.uk} }
        \thanks{Funded by EPSRC grant EP/T028653/1}
\\
{\normalsize{\em University of Bath}} }

\title{Record times for  coverage thresholds and maximal spacings }



\maketitle

\begin{abstract}
Let $X_1,X_2, \ldots $ be independent uniform random points in
a bounded region $A \subset \R^d$ having a smooth boundary, $d \geq 1$.
	Let $B \subset A$ be compact.
	The {\em coverage
threshold} of $B$,  $R_n$, is the smallest $r$ such that $B$ is covered
by the balls of radius $r$ centred on $X_1,\ldots,X_n$. 
	The {\em maximal spacing} $\tR_n$ is the volume of the largest
	ball contained in $A \setminus \{X_1,\ldots,X_n\}$.

	We investigate the asymptotic frequency of {\em record times}
	in the  sequence $(R_n)$, 
	that is times $n$ for which $R_n < R_{n-1}$. 
	Let $N_m$ denote the number of records in the
	sequence $(R_n)$ up to time $m$, and let $\nu_m$ 
	be the time at which the $m$th record value
	of the sequence $(R_n)$ occurs.
	For $B \subset A^o$, we show that almost surely,
	$N_n \sim \frac12 (\log n)^2$ and
	$\nu_n^{1/\sqrt{n}}\to \exp \big(\sqrt{2}\: \big)$ as $n \to \infty$,
	and likewise for $\tN_n$ and $\tnu_n$, defined analogously in
	terms of $(\tR_n)$. But if $B=A$ and $d \geq 3$, then 
	$N_n \sim \frac12 (1- \frac{1}{d}) (\log n)^2$ and
	$\nu_n^{1/\sqrt{n}}\to \exp \big( \sqrt{2d/(d-1)} \: \big)$.

	We also discuss the generalization (for fixed $k \in \N$)
	to $k$-coverage thresholds,
	maximal $k$-spacings and non-uniformly distributed points
	$X_i$ in $A$.
\end{abstract}

%


\section{Introduction}
\label{SecIntro}

The {\em random coverage} problem is the
question of whether a specified compact region $B$ in a Euclidean
space is covered by a union of balls (of equal radius)
centred on  a set of points
placed randomly in a region $A \supset B$ (possibly $A=B$).
This type of question has been much studied
\cite{BB,CSKM,Flatto,HallZW,HallBk,Janson,MPCov}. Potential applications include
wireless communications \cite{BB,Lan}, ballistics \cite{HallBk}, genomics
\cite{Roy} and topological data analysis \cite{BW,KTV}.

As more points get added, we can cover $B$
using smaller balls. We consider here the {\em coverage threshold} $R_n$,
defined (as in \cite{MPCov}) as the  the smallest radius of balls, centred on a
 sample $\{X_1,\ldots,X_n\}$ of $n$ random points in $A$, required 
to cover $B$. More generally, for $k \in \N$
 the {\em $k$-coverage threshold} $R_{n,k}$
is the smallest radius required to cover $B$ $k$ times. 
These thresholds are random variables, because the locations of
the centres are random.
We investigate their probabilistic behaviour 
as $n$ becomes large.

A related concept is the {\em maximal spacing} in the sample.
If $d=1$ this is the largest gap between successive points of the
ordered sample. For general $d$ and $k$, we define the maximal
$k$-spacing $\tR_{n,k}$ to be the volume of the largest `hole', defining
a hole to be a ball inside $A$ containing fewer than $k$ points of the sample
(note that the definition of $\tR_{n,k}$ does not involve $B$).
The maximal spacing has also been much studied; see for example
\cite{Aaron,Deheuvels,Henze,Janson2}. As described in \cite{Aaron},
there are many statistical applications.

Quite a lot is known about the asymptotic behaviour of $R_n$ and
$\tR_n$
including the asymptotic distribution
and strong laws of large numbers;
see  \cite{MPCov}.
In the present paper we investigate the behaviour of $R_n$ and of
$\tR_n$ as {\em time series} in $n$ (viewing $n$ as time).
In particular, we investigate
the limiting behaviour for the number of {\em records}
of $R_n$, that is, the number
of times $m$ up to time $n$ that we have $R_m < R_{m-1}$.
We also consider the number of records of $\tR_n$.
More generally, for fixed $k \in \mathbb N$ we consider records of $R_{n,k}$
and of $\tR_{n,k}$.
General motivation comes from the fact that modern datasets are often not only very large but rapidly increasing in size, so there may be something to be gained from understanding quantities such as $R_n$ and $\tR_n$ as
a time series in $n$. The theory  of records for the maximum or
minimum of an i.i.d. sequence is much more fully developed; see for example
the books \cite{ANbook,Arnold}.

For $m \in \N$, let $N_{m,k}$ (resp. $\tN_{m,k}$) denote the number of
records of the sequence $(R_{n,k})_{n \geq k}$
(resp. $(\tR_{n,k})_{n \geq k}$) up to time $m$
(since our sequence
$R_{n,k}$ is non-increasing in $n$, all of our records are {\em lower records}).
    Let $\nu_{m,k}$ (resp. $\tilde{\nu}_{m,k}$) be the time of the  
$m$th record value of the sequence $(R_{n,k})_{n \geq k}$ 
(resp.,  the sequence 
$(\tR_{n,k})_{n \geq k}$). 
In Theorem \ref{t:record} we show for $B \subset A^o$
that for any fixed $k \in \N$, 
\begin{align}
N_{n,k} \sim \frac12 (\log n)^2 ~~~ \mbox{ and } ~~~
	\nu_{n,k}^{1/\sqrt{n}} \to \exp \big( \sqrt{2} \:
	\big) ~~~ \mbox{  as } n \to \infty, ~~ a.s.,
	\label{e:takehome1}
\end{align}
but in Theorem \ref{Thm3} we show for $B=A$ with a nice boundary
that if $d \geq 3 $ then
\begin{align}
N_{n,k} \sim \frac{d-1}{2 d} (\log n)^2 ~~~ \mbox{ and } ~~~
	\nu_{n,k}^{1/\sqrt{n}} \to \exp \big(\sqrt{(2d)/(d-1)}\: \big) ~~~ \mbox{  as } n \to \infty, ~~ a.s.
\label{e:takehome2}
\end{align}
In Theorem \ref{Thm2} we show 
that $\tN_{n,k} \sim \frac12 (\log n)^2$ and
$\tnu_{n,k}^{1/\sqrt{n}} \to e^{2^{1/2}} $  as $ n \to \infty,$
a.s.

These contrast  with a classic result of R\'{e}nyi \cite{Renyi}
on record values in an i.i.d. $\R$-valued sequence, which says that
the number of records up to time $n$ 
in such a sequence is asymptotic to $\log n$.
Loosely speaking, in the i.i.d. situation the probability of a new observation
at time $n$ being a record is $1/n$, while in our situation it is about
$(\log n)/n$ when $B \subset A^o$,
and is about $(1-1/d) (\log n)/n$ when $B=A$;
since the partial sums of the series 
$\sum (\log n)/n$ are asymptotic to $(\log n)^2/2$ (see Lemma 
\ref{lemsum})  this suggests the stated asymptotics for the
number of records.

The reason for the $(\log n)/n$ asymptotics is roughly
as follows. Consider first the 
case of the maximal spacing in $d=1$, taking $A=[0,1]$. It is known
\cite{Slud} that the
largest spacing is almost surely asymptotic to $(\log n)/n$,
and a new observation causes the maximal spacing to decrease precisely when it
lands in the largest gap. 
In higher dimensions, again the probability content of the `largest hole'
is asymptotic to $(\log n)/n$ \cite{Deheuvels,MPCov}
so the same asymptotics apply; again, a record occurs when a new point arrives in the largest existing hole. For the
coverage threshold when $B \subset A^o$ similar heuristics apply, leading to
 \eqref{e:takehome1}.

 For the coverage threshold
 in the case $B=A$, we allow `holes' that are centred 
 in $A$ rather than having to be contained in $A$.  
 At least for $d \geq 3$, the largest hole is likely to be centred
 near the boundary $\partial A$ of $A$,
 and this leads to its probability content being
 asymptotic to $(1-1/d) (\log n)/n$ rather than $(\log n)/n$, and hence to the different constants arising in \eqref{e:takehome2}.

 One step  in converting the above heuristics into a proof is to show
 that the `largest hole' is almost surely {\em unique}, i.e. there
 is no `tie for first place' when ordering the holes by size.
 We need this uniqueness to be sure that the event of a new random point landing in the existing largest hole really
 does cause the $k$-coverage threshold or maximal $k$-spacing to decrease. 
 The uniqueness of the largest hole is   
 intuitively unsurprising but seems to require some work to prove rigorously. 
 See Section \ref{s:uniqueness}.

 The discussion so far  has been for uniformly distributed points in $A$,
 but in fact we shall state our results in a more general setting, allowing
 for a class of non-uniform distributions on $A$ for the random points $X_i$.
 However, we do not present any result for the asymptotic frequency 
 of records for the coverage threshold when $B=A$ and  
 the distribution of the random points is uniform over $A$ with $d=2$.
 This particular case is harder to deal with because the largest hole
 could be either in the interior of $A$ or on the boundary
 (see \cite{MPCov}), making
 the asymptotics more delicate. We leave this case for future work.



\section{Statement of results}
\allco
We work within the following mathematical framework.  Let $d \in \N$,
and let $A \subset \R^d$ be compact with non-empty interior.
Suppose $X_1,X_2,\ldots$ are independent identically distributed
random $d$-vectors with probability density function
$f$ with support
 $A $. We refer to the special case where $f$ is constant on $A$
 as the {\em uniform case}.
 Let $B \subset A$ be a  specified compact set (possibly
the set $A$ itself).
For $x \in \R^d$ and $r>0$, set $B(x,r):= \{y \in \R^d:\|y-x\| \leq r\}$,
where $\|\cdot\|$ denotes the Euclidean norm.
For $n \in \N$,  let $\X_n:= \{X_1,\ldots,X_n\}$.
Given also $k \in \N$, we
 define the $k$-coverage threshold $R_{n,k}$ of $B$
 (also written $R_{n,k}(B)$ if the choice of $B$ needs to be made clear)
 by
\begin{align}
R_{n,k} : =
 \inf \left\{ r >0: \X_n   (B(x,r)) \geq k 
~~~~ \forall x \in B \right\},
~~~ n,k  \in \N,
	\label{e:defRnk}
\end{align}
where for any point set $\X \subset \R^d$ and any $D \subset \R^d$ we write
$\X(D)$ for the number of points of $\X$ in $D$,
 and we use the convention $\inf\{\} := +\infty$.
In particular $R_n : = R_{n,1}$ is the coverage threshold.  Observe that
$R_n  = \inf \{ r >0: B \subset \cup_{i=1}^n B(X_i,r) \}$.
If $B=A$ then $R_n$ is the Hausdorff distance between the sets $A$ and
$\X_n$.

Let $\theta$ denote the volume of the unit ball in $\R^d$,
that is $\theta = \pi^{d/2}/\Gamma((d/2)+1)$.  Set
\begin{align}
\tilde{R}_{n,k} : = \inf \left\{ r >0: \X_n (B(x,r))
\geq k ~~~ \forall x \in  A^{(r)} \right\},~~~ n,k \in \N, ~~ n \geq k,
\label{eqmaxspac}
	\end{align}
where $A^{(r)} := \{x \in A: B(x,r) \subset A\}$, the `$r$-interior' of $A$.  
Define the {\em maximal $k$-spacing} to be $\theta \tilde{R}_{n,k}^d$.
Set $\tR_n: = \tR_{n,1}$; the maximal 1-spacing $\theta \tilde{R}^d_n$
is also called the maximal spacing.  
Note that $\tilde{R}_{n,k} $ is the largest $r$ 
such that we can find an open ball of radius $r$ in $A$ containing
at most $k-1$ points of $\cX_n$.

For $n > k$, we define the number of (lower)
records for the $k$-coverage threshold, $N_{n,k}$, by
\begin{align*}
N_{n,k} :=  1+
	\sum_{i = k+1}^n {\bf 1}_{\{R_{i,k} < \min_{j \in \{k,\ldots,i-1\}}
	R_{j,k}\}}
= 
1 + \sum_{i = k+1}^n {\bf 1}_{\{R_{i,k} < R_{i-1,k}\}},
\end{align*}
where the second equality holds because $(R_{n,k})_{n \geq k}$ is
 monotone nonincreasing in $n$.
Also let $\nu_{m,k}$ denote the time of the $m$th record; that is
$\nu_{1,k} := k$ and for each $m >1$ in succession set
 $\nu_{m+1,k} := \inf\{n > \nu_{m,k}: R_{n,k} < R_{n-1,k}\}$.
 Similarly for the maximal $k$-spacing, we define the number of records 
$$
\tN_{n,k} := 1 + \sum_{i = k+1}^n {\bf 1}_{\{\tR_{i,k} < \tR_{i-1,k}\}},
$$
with
$\tnu_{1,k} :=k $ and for each $m >1$
in succession,
 $\tnu_{m+1,k} := \inf\{n > \tnu_{m,k}: \tR_{n,k} < \tR_{n-1,k}\}$.

For $D \subset \R^d$
let $D^o$ denote the interior and $\overline{D}$ the closure of $D$,
and let $\partial D: = \overline{D} \setminus D^o$.
Also let $\dim(D)$ denote the Hausdorff dimension of $D$.
We assume throughout that 
$A$ and $B$ are compact with
$\overline{A^o} = A$ and
  either 
 $B \subset A^o$, or $B=A$.
 We also assume throughout that the density function $f$ is continuous on $A$
 with $f_0 >0$, where 
 we define the constants
 \begin{align}
 f_0:= f_0(B) := \inf_{x \in B} f(x): ~~~~~~~~~
 f_1:=  \inf_{x \in \partial A} f(x).
	 \label{e:f0f1def}
 \end{align}

 For $d \geq 2$,
we say that $A$ {\em has a $C^{1,1}$ boundary} (for short: $\partial A \in
C^{1,1}$) if
for each $x \in \partial A$
there exists a neighbourhood $U$ of $x$ and a real-valued function $\phi$ that
is  defined on an open set in $\R^{d-1}$
and Lipschitz-continuously differentiable, such
that  $\partial A \cap U$, after a rotation, is the graph of the
function $\phi$.

For $d=1$, the condition of $A$ having a $C^{1,1}$ boundary amounts
to saying that $\partial A$ is a finite set, or in other words $A$
is a finite union of intervals, and this is how we interpret the
$C^{1,1}$ boundary condition when $d=1$. With this interpretation of
the $C^{1,1}$ condition, all of our results hold for all $d$ including $d=1$; some parts of the proofs have simplifications for the case $d=1$ that we shall not discuss in detail.

	Given $g: \N \to \R$ and $h: \N \to [0,\infty)$,
	we write $g(n) = O(h(n))$ as $n \to \infty$ if
	$\limsup_{n \to \infty} (|g(n)|/h(n)) < \infty$.
	We write $g(n) \sim h(n)$ if $\lim_{n \to \infty} g(n)/h(n)=1$.
	Similarly, given $\delta >0$ and $g: (0,\delta) \to \R$
	and $h: (0,\delta) \to [0,\infty)$, 
	we write $g(x) = O(h(x))$ as $x \downarrow 0$ if
	$\limsup_{x \downarrow 0 } (|g(x)|/h(x)) < \infty$.


\begin{theo}[Records for the $k$-coverage threshold when $B \subset A^o$]
\label{t:record}
	Suppose that $B = \overline{B^o} \neq \emptyset $,
	and that  $B \subset  A^o$ with $\dim(\partial B) < d $.
Let $k \in \N$.
  Then as $n \to \infty$,
	\begin{align}
		\E[N_{n,k}] & \sim (1/2) (\log n )^2;
\label{0125a}
\\
		N_{n,k} & \sim  (1/2) (\log n )^2
~~~~{\rm a.s.}
\label{0125b}
	\end{align} 
and 
	\begin{align}
	\nu_{n,k}^{1/\sqrt{n}} \to \exp \big( \sqrt{2} \: \big)   
~~~~{\rm a.s.}
\label{0125c}
		\end{align}
\end{theo}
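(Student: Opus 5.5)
The plan is to reduce everything to the conditional probability that $X_{n+1}$ produces a record, given $\X_n$. Let $\cF_n := \sigma(X_1,\ldots,X_n)$ and $p_n := \Pr[R_{n+1,k} < R_{n,k} \mid \cF_n]$. The event $R_{n+1,k}<R_{n,k}$ requires $X_{n+1}$ to land in the region $H_n$ where it can reduce the $k$-coverage deficiency at the critical location(s). Concretely, take $H_n$ to be the union over all $x \in B$ with $\X_n(B^o(x,R_{n,k})) \leq k-1$ of the balls $B(x,R_{n,k})$. If the `largest hole' is a.s.\ unique, which is the uniqueness result of Section~\ref{s:uniqueness} that I take as given, then $H_n$ is essentially the single ball $B(x_n,R_{n,k})$ centred at the unique maximizer $x_n$. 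In that case a point landing in its interior strictly decreases $R_{n,k}$, and a point landing outside does not. Hence
\begin{align*}
p_n = \int_{B(x_n,R_{n,k})} f(y)\,dy \approx f(x_n)\,\theta R_{n,k}^d \quad \mbox{a.s.}
\end{align*}

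Next I would use the strong law for $R_{n,k}$ from \cite{MPCov}: for $B \subset A^o$, $n\theta f_0 R_{n,k}^d/\log n \to 1$ a.s. I would also check that the maximizer $x_n$ asymptotically sits where $f$ is near $f_0$, so that $n\,p_n/\log n \to 1$ a.s. Heuristically, a hole of probability content $p$ at $x$ has chance about $e^{-np}$, and the number of essentially disjoint locations is of order $n/\log n$ in $B$. The $\dim(\partial B)<d$ and $B=\overline{B^o}$ hypotheses ensure that holes centred near $\partial B$ do not give a different constant, and that the content of the maximal hole is $(1+o(1))\log n/n$ with both bounds. I would quote this from \cite{MPCov} if it is stated there in the form $n\,\Pr[X\in B(x_n,R_{n,k})]/\log n \to 1$. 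Otherwise I would derive it from upper and lower bounds on $R_{n,k}$ together with the continuity of $f$.

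Given $p_n \sim (\log n)/n$ a.s., write $N_{n,k} = 1+\sum_{i} \xi_i$ with $\xi_{i+1}={\bf 1}\{R_{i+1,k}<R_{i,k}\}$ and $\E[\xi_{i+1}\mid\cF_i]=p_i$. Lemma~\ref{lemsum} gives $\sum_{i\le n} p_i \sim \frac12(\log n)^2$ a.s. The martingale $M_n=\sum(\xi_{i+1}-p_i)$ has quadratic variation at most $\sum p_i = O((\log n)^2)$, so Freedman's inequality or the conditional Borel--Cantelli/Lévy extension yields $M_n = o((\log n)^2)$ a.s., which proves \eqref{0125b}. For \eqref{0125a} I need $\E\sum p_i \sim \frac12 (\log n)^2$. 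The lower bound comes from Fatou. For the upper bound I would use a uniform tail estimate: $\Pr[n\theta f_0 R_{n,k}^d > (1+\eps)\log n]$ is summably small, and $p_n\le1$, giving dominated convergence for $n p_n/\log n$ after truncation. Finally \eqref{0125c} follows by inverting \eqref{0125b}. Since $N_{\nu_{m,k},k}=m$ and $N$ is monotone, we get $\frac12(\log \nu_{m,k})^2 \sim m$, i.e.\ $\log\nu_{m,k} \sim \sqrt{2m}$, so $\nu_{m,k}^{1/\sqrt m}\to e^{\sqrt 2}$.

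The main obstacle is the identification $p_n \sim (\log n)/n$ a.s., specifically its upper bound. Uniqueness of the maximal hole is needed for the lower bound; without it a point in one hole need not reduce $R_{n,k}$. The upper bound is needed to rule out that the record region $H_n$ is much larger than one ball, which could happen if several near-maximal holes exist. With a.s.\ uniqueness, $H_n$ is one ball, and the probability content follows from the strong law for $R_{n,k}$ combined with $f(x_n)\to f_0$. For the latter I would first try a direct Borel--Cantelli argument showing that, eventually, no hole of content $\ge (1-\eps)\log n/n$ exists in regions where $f>f_0+\delta$.
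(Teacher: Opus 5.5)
Your architecture matches the paper's: the record probability is the $\mu$-content of the unique largest hole, then the strong law \eqref{0315a} plus $f(W_{n,k})\to f_0$, a martingale step, uniform integrability for the means, and inversion for $\nu_{n,k}$. Using the exact conditional probability $p_n$ with L\'evy's conditional Borel--Cantelli lemma is a fine substitute for the $I_nU_{n-1}$ device of Lemma~\ref{l:genrec}.

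The genuine gap is your upper bound for $\E[N_{n,k}]$. The tail $\Pr[n\theta f_0R_{n,k}^d>(1+\eps)\log n]$ is not summable for small $\eps$. There are about $n/\log n$ essentially disjoint candidate holes, each empty with probability about $n^{-1-\eps}$, so this tail is of order $n^{-\eps}$ up to logarithmic factors. For example, for the maximal spacing $M_n$ of $n$ uniform points on a circle of unit length, Bonferroni gives $\Pr[nM_n>(1+\eps)\log n]\sim n^{-\eps}$. Bounding $p_i\le 1$ on these bad events then leaves an error of order $\sum_{i\le n}i^{-\eps}\approx n^{1-\eps}$, which swamps $(\log n)^2$.

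What is missing is control of the tail of $nR_{n,k}^d/\log n$ at all levels $t\log n$ with $t\ge1$, i.e.\ uniform integrability. This should be used together with the bound $p_n\le\mu(B(w,R_{n,k}))\le(\sup_A f)\,\theta R_{n,k}^d$ for any $w\in\cW_{n,k}$. That bound holds without uniqueness, because a record forces $X_{n+1}\in B(w,R_{n,k})$. The paper supplies the tail control through the exponential moment bound of Lemma~\ref{meanlem}. A union bound over $O(n/\log n)$ grid balls gives $\Pr[R_{n,k}>tr_n]\le n^{k-ct^d}$ for $t\ge1$ and $tr_n$ small, with $r_n:=((\log n)/n)^{1/d}$, and $\Pr[R_{n,k}\ge\delta]$ is exponentially small. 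Equivalently, your truncation argument goes through if you truncate at $T\log n$ with $T$ so large that this tail is $O(n^{-2})$.

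Two further steps need repair.

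First, for $B\subset A^o$ the uniqueness result of Section~\ref{s:uniqueness} is only Proposition~\ref{lemWn}, namely $|\cW_{n,k}\cap B^o|\le1$. A maximizer on $\partial B$ need not be equidistant from $d+1$ sample points, so the generic-position argument says nothing there. A tie involving a point of $\partial B$ would invalidate $p_n=\mu(B(x_n,R_{n,k}))$. The paper handles this in Lemma~\ref{lemEn}: it covers $\partial B$ by $O(r^{\delta-d})$ balls and applies Lemma~\ref{l:Sn} to get $\limsup_n n\theta R_{n,k}(\partial B)^d/\log n\le(1-\delta/d)/f_0<1/f_0$. Hence a.s.\ $\cW_{n,k}\cap\partial B=\emptyset$ eventually. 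This, rather than the value of the constant, is where $\dim(\partial B)<d$ does its essential work.

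Second, your Borel--Cantelli target for $f(x_n)\to f_0$ is false as stated. In any fixed ball inside $B$, whatever the value of $f$ there, the largest hole eventually has content at least $(1-\eps)(\log n)/n$. So such holes do occur in $\{f>f_0+\eta\}$. The comparison must be made on radii, as in Lemma~\ref{lemWn2}. With $D_\eta:=B\cap\{f\ge f_0+\eta\}$, Lemma~\ref{l:upper} gives $\limsup_n n\theta R_{n,k}(D_\eta)^d/\log n\le1/(f_0+\eta)<1/f_0$. So eventually no location of $D_\eta$ has $k$-distance $R_{n,k}$. In content terms, what you must rule out there are holes of content at least $(1+\eta/(2f_0))(\log n)/n$.
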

Theorem \ref{t:record} also applies (by minor modifications of the proof)
if $B=A$ is a torus.
\begin{theo}[Records for the maximal $k$-spacing]
	\label{Thm2}
	Suppose  $\partial A \in C^{1,1}$ or $A$ is convex.
	Let $k \in \N$.
	Then as $n \to \infty$,
	\eqref{0125a}, \eqref{0125b} and \eqref{0125c} all hold
	with
	$N_{n,k}$ replaced by $\tN_{n,k}$
	and $\nu_{n,k}$ replaced by $\tnu_{n,k}$.
\end{theo}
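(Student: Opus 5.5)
The plan is to reduce everything to the probability that step $n+1$ is a record. Let $\F_n$ be the $\sigma$-field generated by $\X_n$. Set $p_n := \Pr[\tR_{n+1,k} < \tR_{n,k} \mid \F_n]$.

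First I show that, almost surely for all large $n$, the maximal $k$-hole at time $n$ is unique. This means there is a unique closed ball $B(x_n,\tR_{n,k}) \subset A$ whose interior contains at most $k-1$ points of $\X_n$ (in fact exactly $k-1$). Given uniqueness, $\tR_{n+1,k} < \tR_{n,k}$ holds precisely when $X_{n+1}$ lands in that ball. So $p_n = \int_{B(x_n,\tR_{n,k})} f$.

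Next I show $n p_n / \log n \to 1$ a.s. This uses the strong law for the maximal $k$-spacing: a.s.
\[
n \max_x \int_{B(x,\tR_{n,k})} f \sim \log n .
\]
In the uniform case this is the classical $(\log n)/n$ law for the largest hole \cite{Deheuvels,MPCov}. For non-uniform $f$ the hole concentrates where $f$ is minimal, and its $f$-content is still $\sim(\log n)/n$. This is because the threshold is determined by $n\int_{B}f \approx \log n + (\text{lower order})$ at the worst location. I would prove it directly by a Poissonization and covering-number argument, with upper and lower bounds at scales $r$ with $n f_0\theta r^d = (1\pm\epsilon)\log n$. The $C^{1,1}$ or convexity assumption guarantees enough room near $\partial A$, namely $|B(x,r)\cap A|\ge c r^d$ uniformly, so boundary holes cannot dominate. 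The $k-1$ extra points only contribute $O(\log\log n)$ corrections.

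Then, with $\xi_n := {\bf 1}\{\tR_{n+1,k}<\tR_{n,k}\}$, I have $\E[\xi_n\mid\F_n]=p_n$. Lemma \ref{lemsum} gives $\sum_{m\le n} (\log m)/m \sim \frac12(\log n)^2$, which yields $\sum_{m\le n} p_m \sim \frac12 (\log n)^2$ a.s. For the expectation \eqref{0125a} I need $\E[p_n]\sim(\log n)/n$. This follows from the a.s. convergence together with dominated convergence applied to $np_n/\log n$. For that I need a uniform-integrability bound, e.g. $\Pr[n p_n > K\log n] \le n^{-2}$ for large $K$, via the same covering estimates.

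For the a.s. statement I use the martingale $M_n=\sum_{m<n}(\xi_m-p_m)$. Its bracket is at most $\sum p_m = O((\log n)^2)$. By the strong law for martingales (Freedman / Lévy's extension of Borel--Cantelli), $\sum\xi_m \sim \sum p_m$ a.s. whenever $\sum p_m=\infty$. This gives \eqref{0125b} for $\tN_{n,k}$. Inverting, $\tN_{\tnu_m,k}=m$, so $\frac12(\log \tnu_m)^2\sim m$. Hence $\log\tnu_m\sim\sqrt{2m}$, i.e.\ $\tnu_m^{1/\sqrt m}\to e^{\sqrt2}$, which is \eqref{0125c}.

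The main obstacle is the uniqueness of the maximal hole: ties for the largest hole would make $p_n$ strictly smaller than the hole content. My first approach would be as follows. The maximal empty ball is determined by $d+1$ points, or fewer points together with boundary contact, and its radius is then a function of those points. Two distinct such configurations give radii that are functions of distinct point tuples. Conditioning on all but one differing point, the radius of one configuration has a continuous distribution, since $f$ has a density and $\partial A$ is $C^{1,1}$ or convex. So equality has probability zero. A countable union over tuples and $n$ then gives a.s. uniqueness for all $n$. Care is needed with degenerate configurations and with boundary-touching holes, where the radius depends on the geometry of $\partial A$. I expect handling these to be the most technical part.
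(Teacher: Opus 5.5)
\textbf{There is a genuine gap in the uniqueness step,} namely for maximal holes whose ball touches $\partial A$. Your tie-breaking argument (condition on all but one differing point, so that the radius has a continuous law) works for interior holes. There the centre is the circumcentre $Y_I$ of $d+1$ sample points and the radius is $\dist(Y_I,\cX_I)$, which is exactly Proposition~\ref{lemWn}. For holes touching $\partial A$, however, the radius is partly fixed by the geometry of $\partial A$ and need not depend non-degenerately on any sample point.

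Your claimed ``a.s.\ uniqueness for all $n$'' is in fact false. Let $A$ be a stadium: the rectangle $[-L,L]\times[-w,w]$ with $L\ge 2w$ and half-discs of radius $w$ attached at both ends. This set is convex with $C^{1,1}$ boundary. For $n=k=1$, $\tR_{1,1}=w$ deterministically, and with probability one $\tcW_{1,1}$ contains a segment of positive length. Ties of the same kind occur with positive probability for every fixed $n$, for instance when all points lie in the left quarter of $A$.

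You flag boundary-touching holes as the most technical part but give no argument. So eventual uniqueness, and with it $p_n=\mu(B(x_n,\tR_{n,k}))$ for large $n$, is not established. A direct non-degeneracy analysis near a general convex or $C^{1,1}$ boundary would also take substantial extra work, even at small radii; compare what the paper needs in Lemmas~\ref{l:Hn} and~\ref{l:Rdot} for the coverage case $B=A$.

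\textbf{The missing idea} is that you never need to break boundary ties, because for large $n$ the maximal hole a.s.\ does not touch $\partial A$ at all. This is Lemma~\ref{l:tWinint}.
\begin{itemize}
\item By Lemma~\ref{l:Covbdy}, the shell $A^{(r)}\setminus A^{(3r)}$ is covered by $O(r^{1-d})$ balls of radius $r$.
\item Lemma~\ref{l:Sn} then shows the shell is $k$-covered at a radius $S_{n,k}$ with $\limsup_n nS_{n,k}^d/\log n\le(1-1/d)/(f_0\theta)$. This is strictly below the rate $1/(f_0\theta)$ for $\tR_{n,k}^d$ from \eqref{0315b}.
\item A triangle-inequality argument then shows $\tcW_{n,k}\subset(A^{(\tR_{n,k})})^o$ eventually, and interior uniqueness finishes the job.
\end{itemize}
This covering-number bound is where the $C^{1,1}$/convexity hypothesis does its real work. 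The volume bound $|B(x,r)\cap A|\ge cr^d$ you cite plays no role in ruling out boundary holes, since hole balls already lie inside $A$.

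\textbf{A secondary problem} is your displayed strong law with $\max_x$, which is false for non-uniform $f$: that maximum is about $(\sup f/f_0)\log n/n$. What you actually need is $f(x_n)\to f_0$ at the maximal-hole centre. You assert this but do not prove it. It follows by applying \eqref{e:upper2} of Lemma~\ref{l:upper} to $D=\{f\ge f_0+\eps\}$ and comparing with \eqref{0315b}, as in Lemma~\ref{l:fconv}.

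\textbf{The rest of your plan is sound}, and in places a clean alternative to the paper's route:
\begin{itemize}
\item defining $p_n$ as the exact conditional record probability and using L\'evy's conditional Borel--Cantelli lemma, instead of the $U_n$ device and Feller's theorem;
\item the tail bound giving uniform integrability;
\item the inversion giving the asymptotics of $\tnu_{n,k}$.
\end{itemize}
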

\begin{theo}[Records for the $k$-coverage threshold when $B = A$]
	\label{Thm3}
	Suppose $B=A$ and $\partial A \in C^{1,1}$.
	Let $k  \in \N$.
	If $1/f_0 > (2-2/d)/f_1$ 
	then \eqref{0125a}, \eqref{0125b} 
	and \eqref{0125c} all  hold as $n \to \infty$. If instead
	 $1/f_0 < (2-2/d)/f_1$, then  as $n \to \infty$,
	\begin{align}
		\E[N_{n,k}] & \sim (1/2) (1-1/d)(\log n )^2;
\label{0720a}
\\
		N_{n,k} & \sim (1/2) (1-1/d) (\log n )^2
~~~~{\rm a.s.}
\label{0720b}
	\end{align}
and 
	\begin{align}
	\nu_{n,k}^{1/\sqrt{n}} \to \exp \big( \sqrt{2 d/(d-1)}  \: \big)   
~~~~{\rm a.s.}
\label{0720c}
	\end{align}
\end{theo}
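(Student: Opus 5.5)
The proof will follow the same scheme as for Theorem \ref{t:record}: reduce the record indicator at time $n+1$ to the event that $X_{n+1}$ lands in the current "largest hole", then control the probability content of that hole. Here a hole is a ball $B(x,R_{n,k})$ with $x\in A$ containing at most $k-1$ points of $\X_n$; it may stick out of $A$.

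\emph{Step 1: reduction to the hole.} Condition on $\X_n$ and write $p_n := \int_{H_n} f$, where $H_n := B(x_n,R_{n,k})\cap A$ and $x_n$ is the (a.s.\ unique) point of $A$ realizing $R_{n,k}$. Given the uniqueness result of Section \ref{s:uniqueness} (assumed available in the $B=A$, $C^{1,1}$ setting), a record occurs at time $n+1$ essentially iff $X_{n+1}\in H_n$. More precisely, $\Pr[R_{n+1,k}<R_{n,k}\mid \X_n]=p_n$ up to boundary-of-ball null events. Hence $\E N_{n,k}=1+\sum_{m<n}\E p_m$, and $N_{n,k}-\sum_{m<n} p_m$ is a martingale.

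\emph{Step 2: strong law for $p_n$.} I will show
\[
n p_n/\ln n\to c \quad \text{a.s.},
\]
with $c=1$ when $1/f_0>(2-2/d)/f_1$ and $c=1-1/d$ otherwise. For this I use the known strong laws for $R_{n,k}$ when $B=A$ (from \cite{MPCov}): $n\theta f_0 R_{n,k}^d/\ln n\to 1$ in the first regime (interior hole), and $n\theta f_1 R_{n,k}^d/\ln n\to 2-2/d$ in the second (boundary hole). In the second regime I also need that the maximizing centre $x_n$ lies within $o(R_{n,k})$ of $\partial A$. The $C^{1,1}$ boundary then gives $\int_{B(x_n,R)\cap A} f\sim f_1\theta R^d/2$, and so $p_n\sim \tfrac12(2-2/d)\ln n/n=(1-1/d)\ln n/n$. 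In the first regime the hole is interior with density near $f_0$, giving $p_n\sim \ln n/n$.

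The main obstacle is making this localization rigorous. I must show that in the boundary regime the hole's centre is near $\partial A$, at a point where $f$ is close to $f_1$, and that its probability content is pinned down by $R_{n,k}$ rather than merely bounded. My first attempt will be a direct covering argument. Take a fine grid of candidate centres and bound the probability that some ball of content at least $(c+\eps)\ln n/n$ contains fewer than $k$ points. Also bound the probability that all balls of content $(c-\eps)\ln n/n$ are hit, using Borel--Cantelli along $n=\lfloor(1+\delta)^j\rfloor$ and monotonicity in $n$. The content estimate depends on distance to the boundary and on $f$, and the dichotomy in $f_0,f_1$ comes out of this optimization.

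\emph{Step 3: summation and conclusions.} By Lemma \ref{lemsum}, $\sum_{m\le n} c\ln m/m\sim (c/2)(\ln n)^2$. Dominated convergence (using $np_n/\ln n$ bounded in $L^1$, via tail bounds from the covering estimate) gives \eqref{0720a}. For \eqref{0720b}, the martingale has conditional variances summing to $O((\ln n)^2)$, so it is $o((\ln n)^2)$ a.s.; alternatively, the conditional Borel--Cantelli lemma applies directly. Finally, \eqref{0720c} follows by inversion. Since $N_{\nu_n,k}=n$, we get $n\sim (c/2)(\ln\nu_n)^2$, so $\ln\nu_n/\sqrt n\to\sqrt{2/c}$, which equals $\sqrt{2d/(d-1)}$ when $c=1-1/d$ and $\sqrt2$ when $c=1$.
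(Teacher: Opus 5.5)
Your overall architecture matches the paper's, which packages it as Lemma \ref{l:genrec}: a record at time $n+1$ iff $X_{n+1}$ lands in the unique maximal hole, conditional probabilities equal to that hole's $\mu$-content, then Lemma \ref{lemsum}, a martingale strong law, and inversion for $\nu_{n,k}$. The gap is in Step 2, the only part specific to Theorem \ref{Thm3}. You correctly identify localization of the maximizing centre as the main obstacle, but the covering argument you propose controls the wrong quantity. $p_n$ is the content of the hole of maximal \emph{radius} $R_{n,k}$, not the largest content of an unhit ball, and these differ in exactly the case of interest.

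As written, your upper bound requires that with high probability no ball of content at least $(c+\eps)\log n/n$ contains fewer than $k$ points. With $c=1-1/d$ this is false. By \eqref{0315b}, a.s.\ for all large $n$ there are balls contained in $A$ with fewer than $k$ sample points in their interior and content at least $f_0\theta\tR_{n,k}^d\sim\log n/n$.

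Your lower bound (some ball of content $(c-\eps)\log n/n$ is unhit) is the usual lower bound for a maximum, and says nothing about the particular ball $B(x_n,R_{n,k})$. In the interior regime you need a statement of the opposite type: every ball of radius about $R_{n,k}$ centred within $R_{n,k}$ of $\partial A$ is hit. Such balls have content as low as about $(f_1/(2f_0))\log n/n$, e.g.\ $\frac12\log n/n$ in the uniform case.

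At the level of content the interior always wins. The dichotomy between $1/f_0$ and $(2-2/d)/f_1$ lives at the level of radii, through the entropy counts $\kappa(A\setminus A^{(r)},r)=O(r^{1-d})$ versus $O(r^{-d})$. The missing idea is to work at that scale, in one of two ways:
\begin{itemize}
\item Do as the paper does and compare coverage thresholds of suitable subregions with $R_{n,k}$. Lemma \ref{l:Wtobdy} (via Lemmas \ref{l:bdyballs}(i), \ref{lemmeta}, \ref{lemtrick}) gives $\dist(W_{n,k},\partial A)=o(R_{n,k})$. Lemma \ref{l:ftof1} (Lemma \ref{l:upper} with $D=f^{-1}([f_1+\eps,\infty))$) gives $f(W_{n,k})\to f_1$. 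Lemmas \ref{l:Wtoint} and \ref{l:ftof0} give $W_{n,k}\in A^{(R_{n,k})}$ and $f(W_{n,k})\to f_0$ in the other regime. Then Lemma \ref{l:bdyballs}(ii), respectively $B(W_{n,k},R_{n,k})\subset A$, yields $n\mu(B(W_{n,k},R_{n,k}))/\log n\to c$.
\item Keep your grid, but restrict it to balls whose radius lies in the a.s.\ window for $R_{n,k}$ supplied by \eqref{0315c}. Then show that every such ball whose content lies outside $((c-\eps)\log n/n,(c+\eps)\log n/n)$ is hit.
\end{itemize}
Your assertion that in the first regime the hole is interior with density near $f_0$ needs the same kind of argument.

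Two smaller points. First, Proposition \ref{p:uniquefurth} gives uniqueness of the maximizer only on $\{R_{n,k}\le\delta_4\}$. So the identity $\E N_{n,k}=1+\sum_m\E p_m$ and the exact martingale in Step 1 hold only up to the a.s.\ finitely many times of non-uniqueness. At those times one only has $\Pr[\mbox{record}\mid\F_n]\le p_n$, which is why the paper uses the indicators $U_n$ and the two conditions \eqref{0116c}, \eqref{e:squeeze} in Lemma \ref{l:genrec}. Second, boundedness of $np_n/\log n$ in $L^1$ does not let you pass to the limit in expectations, and there is no obvious dominating variable. You need uniform integrability, which follows from Lemma \ref{meanlem} since $p_n\le(\sup_A f)\,\theta R_{n,k}^d$.
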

In the uniform case with $B=A$, $f_0= f_1$ so by Theorem \ref{Thm3},
\eqref{0125a}, \eqref{0125b} 
	and \eqref{0125c} hold if $d =1$
	 but
	\eqref{0720a}, \eqref{0720b} and \eqref{0720c}
	  hold if $d \geq 3$. When
	$d=2$ (in the uniform case with $B=A$) we do not present any results,
	but based on \cite[Theorem 3.2]{MPCov} we
	conjecture that if $k \geq 2$ then
	\eqref{0720a}, \eqref{0720b} and \eqref{0720c}
	hold, while if $k=1$ then the limiting value 
	of $2 N_{n,k}/(\log n)^2$ will be a weighted average of
	$1$ and $1/2$, with the weights depending on
	the perimeter of $A$.

	More generally, in cases with $B=A$, $\partial A \in C^{1,1}$
	and $(2-2/d)/f_1 = 1/f_0$, we do not present any results.
	Another extension that we do not consider here is to the intermediate
	case where $B \neq A$ but $B$ is not contained in $A^o$.

	Two other related quantities  that are amenable to our
	methods are the number of records for the $k$-coverage 
	threshold when $B=A$ is a convex polytope, and the number of
	`pseudo-records' for the largest $k$-nearest neighbour link. We
	discuss these quantities in Section \ref{s:related}
	at the end of the paper.

	\section{A general lemma}
	We shall prove Theorems \ref{t:record}, \ref{Thm2} and \ref{Thm3}
	by applying the following general probabilistic result
	in each of our three situations.

\begin{lemm}
	\label{l:genrec}
	Let $k \in \N$.
	Suppose $(\cF_n)_{n \geq k}$ is a filtration and
	$(I_n,U_n,Y_n)_{n \geq k}$
	is an adapted sequence of random 3-vectors with $I_n$
	and $U_n$ Bernoulli distributed for all $n$, 
	such that for all $n \geq k+1$ we have 
	\begin{align}
		\E [ I_nU_{n-1} | \F_{n-1} ] = Y_{n-1} U_{n-1} ~~~~ {\rm a.s.}
\label{0116c}
	\end{align}
	and 
	\begin{align}
		\E[ I_n | \F_{n-1} ] \leq Y_{n-1} ~~~~ {\rm a.s.}
	\label{e:squeeze}
	\end{align}
	Suppose moreover that 
	\begin{align}
		\sum_{n=k}^\infty (1- U_n)  < \infty, ~~~~~{\rm a.s.},
		\label{e:Enev}
		\end{align}
	and that,  almost surely, $Y_n \sim (c\, \log n/n)$ as
	$n \to \infty$ for some constant $c >0$,
	and that $\{n Y_n/\log n, n \geq k\}$ is a uniformly integrable
	family of random variables. For $n \in \N \cap [k,\infty)$ set
	$N_n := \sum_{i=k}^n I_i$, set
$\nu_{1} =k$ and for each successive $m \in \N$ set
	$\nu_{m+1} = \inf\{n > \nu_{m}: I_n =1 \}$. 
	Then $\nu_n$ is a.s. finite for all $n$, and as $n \to \infty$ we have 
	\begin{align}
		\E[N_{n}] & \sim (c/2)( \log n )^2;
\label{0125ac}
		\\
		N_{n} & \sim (c/2)( \log n )^2
~~~~{\rm a.s.}
\label{0125bc}
	\end{align} 
and
	\begin{align}
	\nu_{n}^{1/\sqrt{n}} \to \exp \big( \sqrt{2/c} \: \big)   
~~~~{\rm a.s.}
\label{0125cc}
	\end{align}
\end{lemm}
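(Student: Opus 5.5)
The natural route is to separate $N_n$ into a compensator and a martingale, and to show that the compensator behaves like $\sum_i c\,\log i/i$. Define $p_n := \E[I_n \mid \F_{n-1}]$ and $M_n := \sum_{i=k+1}^n (I_i - p_i)$. Then $M_n$ is a martingale whose increments are bounded by $1$, with conditional variance $\sum p_i(1-p_i) \le \sum_i p_i$.

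By \eqref{0116c} we have $p_n U_{n-1} = Y_{n-1}U_{n-1}$. By \eqref{e:Enev}, $U_{n-1}=1$ for all large $n$ a.s., so $p_n = Y_{n-1}$ eventually. Combined with \eqref{e:squeeze}, this gives $p_n \le Y_{n-1}$ always and $p_n = Y_{n-1}$ eventually. Hence a.s. $p_n \sim c\log n/n$, and by Lemma \ref{lemsum} (partial sums of $\log n/n$ are $\sim \frac12(\log n)^2$), $A_n := \sum_{i\le n} p_i \sim (c/2)(\log n)^2$ a.s.

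For \eqref{0125bc}, I will use the standard strong law for martingales with bounded increments. On the event $\{A_\infty=\infty\}$, which here has probability one, we have $M_n/A_n \to 0$ a.s. (via e.g. Freedman's inequality, or Lévy's extension of Borel–Cantelli). This gives $N_n = A_n + M_n + O(1) \sim (c/2)(\log n)^2$ a.s. Finiteness of each $\nu_m$ follows because $N_n\to\infty$ a.s.

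For \eqref{0125ac}, take expectations: $\E N_n = \E A_n + O(1)$. Write $A_n \le \sum_{i\le n} Y_{i-1}$, and split off the finitely many terms where $U_{i-1}=0$; these contribute at most $\sum Y_{i-1}(1-U_{i-1}) \le \sum (1-U_{i-1})$. That sum is finite a.s. but maybe not integrable, so the cleanest route is instead to prove $\E A_n/(\log n)^2 \to c/2$ directly.
\begin{itemize}
\item \emph{Upper bound.} Using $p_i\le \min(1,Y_{i-1})$, write $\E\sum_{i\le n} Y_{i-1} = \sum_i (\log i/i)\,\E[iY_i/\log i]$ (up to an index shift). By uniform integrability and $iY_i/\log i\to c$ a.s., $\E[iY_i/\log i]\to c$, so this is $\sim (c/2)(\log n)^2$.
\item \emph{Lower bound.} Fatou applied to $A_n/(\log n)^2$ gives $\liminf \E A_n/(\log n)^2 \ge c/2$.
\end{itemize}
The upper bound is the step I expect to need the most care: the index shift, and making sure uniform integrability is exactly what delivers convergence of expectations in the Cesàro-type sum weighted by $\log i/i$. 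That is a standard Toeplitz argument once $\E[iY_i/\log i]\to c$.

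For \eqref{0125cc}, invert the a.s. asymptotics. Since $N_{\nu_m} = m$ (up to an additive constant from the convention $\nu_1=k$) and $\nu_m\to\infty$, \eqref{0125bc} gives $m \sim (c/2)(\log \nu_m)^2$. Hence $\log \nu_m \sim \sqrt{2m/c}$, i.e. $\nu_m^{1/\sqrt m}\to \exp(\sqrt{2/c})$ a.s. The only subtlety is checking $N_{\nu_m}$ against $m$ exactly given $I_k$ and the indexing, which changes things by at most $1$ and is negligible.
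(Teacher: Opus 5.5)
Your proof is correct, but it obtains the almost sure law and the lower half of the expectation asymptotics differently from the paper.

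\textbf{The paper's route.} The paper truncates, setting $I_n^*:=I_nU_{n-1}$ and $Y_n^*:=Y_nU_n$, so that $\E[I_n^*\mid\F_{n-1}]=Y_{n-1}^*$ holds exactly for every $n$. It then applies a martingale strong law with deterministic normalisation $b_n=(\log n)^2$: the criterion $\sum_n b_n^{-2}\E[Z_n^2]<\infty$, cited from Feller, for $Z_n:=I_n^*-Y_{n-1}^*$. That criterion needs $\E[Z_n^2]\le\E[Y_{n-1}^*]=O(\log n/n)$, so the a.s.\ part already uses the uniform integrability hypothesis. Finally $N_n-N_n^*$ is bounded by \eqref{e:Enev}. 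For \eqref{0125ac} the paper sandwiches $\E[I_n]$ between $\E[Y^*_{n-1}]$ and $\E[Y_{n-1}]$, using uniform integrability on both sides.

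\textbf{Your route.} You work with the true compensator $p_n=\E[I_n\mid\F_{n-1}]$. You observe that \eqref{0116c} and \eqref{e:Enev} force $p_n=Y_{n-1}$ eventually, so no truncation is needed. You then invoke L\'evy's conditional Borel--Cantelli lemma, whose random normalisation by $A_n$ is pathwise on $\{A_\infty=\infty\}$ and needs no moment bounds. The lower bound in \eqref{0125ac} then follows from Fatou applied to the a.s.\ result.

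\textbf{What each buys.} In your version, \eqref{0125bc}, \eqref{0125cc} and the a.s.\ finiteness of the $\nu_m$ hold without uniform integrability. That hypothesis is needed only for $\limsup_n \E[N_n]/(\log n)^2\le c/2$. The paper's version is more self-contained, resting only on the $L^2$ martingale convergence theorem and Lemma \ref{lemsum}.

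Your direct inversion via $N_{\nu_m}=I_k+m-1=m+O(1)$ is equivalent to the paper's sandwich $n^-(m)\le\nu_m\le n^+(m)$. You also justify the a.s.\ finiteness of $\nu_m$ explicitly, which the paper leaves implicit.
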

The case $U_n \equiv 1$ of the
lemma says that if $Y_n$ are known to satisfy $(\log n)/n$ asymptotics,
and $Y_n$ is the conditional expectation of the indicator variable 
$I_{n+1}$
then the sum $\sum_{i \leq n} I_i$ has $(\log n)^2$ asymptotics. 
The  general statement allows for a relaxation whereby the
conditional expectation identity holds only on event $\{U_n=1\}$.

In the application of Lemma \ref{l:genrec} to the proof of our theorems,
$I_n$ will be the indicator of a new record arising at time $n$ and
$Y_n$ will be the size of the largest `hole' at that time, while $U_n$ will
be the indicator of the event that there is a unique largest `hole' at time $n$.

The following analytical lemma is required for the proof of Lemma
	\ref{l:genrec}.

\begin{lemm}
\label{lemsum}
Suppose $(x_n)_{n \geq 1}$ is a sequence of real numbers
with $x_n \sim (\log n)/n$ as $n \to \infty$. Then 
for any $\ell  \in \N$,
$$
\sum_{i=\ell +1}^n x_i \sim \frac{(\log n)^2}{2}  ~~~~ {\rm as} ~~ n \to \infty.
$$
\end{lemm}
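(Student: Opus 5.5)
The plan is to compare the partial sums with the integral $\int_e^n (\log t)/t\,dt = \frac12(\log n)^2 - \frac12$, after first discarding the initial terms. Note that the statement concerns only $x_i$ with $i \geq \ell+1$, and any fixed finite block of terms contributes a constant. A constant is $o((\log n)^2)$, so we may freely discard finitely many terms. In particular the value of $\ell$ is irrelevant and it suffices to prove $\sum_{i=m}^n x_i \sim (\log n)^2/2$ for any convenient fixed $m$.

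Fix $\eps \in (0,1)$. By hypothesis there is $m = m(\eps) \geq 3$ such that
$$(1-\eps)(\log i)/i \leq x_i \leq (1+\eps)(\log i)/i$$
for all $i \geq m$. Summing gives
$$(1-\eps) S_m(n) \leq \sum_{i=m}^n x_i \leq (1+\eps) S_m(n), \qquad S_m(n) := \sum_{i=m}^n (\log i)/i.$$
It thus remains to show $S_m(n) \sim (\log n)^2/2$.

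For this, note that $g(t) := (\log t)/t$ is decreasing on $[e,\infty)$, since $g'(t) = (1-\log t)/t^2$. With $m \geq 3 > e$, the standard integral comparison gives
$$\int_m^{n+1} g(t)\,dt \leq S_m(n) \leq g(m) + \int_m^{n} g(t)\,dt.$$
Both bounds equal $\frac12(\log n)^2 + O(1)$, because $\int g = \frac12(\log t)^2$ and $(\log(n+1))^2 - (\log n)^2 = O((\log n)/n) \to 0$. Hence $S_m(n)/((\log n)^2/2) \to 1$.

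Combining these, and using that the discarded terms between $\ell+1$ and $m-1$ (if any) form a fixed finite sum, we obtain
$$1-\eps \leq \liminf \frac{\sum_{i=\ell+1}^n x_i}{(\log n)^2/2} \leq \limsup \frac{\sum_{i=\ell+1}^n x_i}{(\log n)^2/2} \leq 1+\eps.$$
Letting $\eps \downarrow 0$ gives the claim. There is no genuine obstacle here; the only point needing care is the monotonicity of $g$ on $[e,\infty)$, which is why we start the comparison at an index $m \geq 3$.
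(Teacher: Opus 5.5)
Your proof is correct and follows the same route as the paper's. Both squeeze $x_i$ between $(1\pm\eps)(\log i)/i$ beyond a threshold index, bound the sum above and below by integrals of $(\log t)/t$ using its monotonicity on $[e,\infty)$, and then note that finitely many initial terms (hence the choice of $\ell$) do not affect the asymptotics.
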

\begin{proof}
Let $\eps \in (0,1)$. Since $(\log x)/x$ is decreasing in $x$ for $x \geq e$,
there exists $\ell_0 \in \N$ such
that for all $\ell \geq \ell_0$  and $n > \ell$,
	\begin{align*}
	\sum_{i=\ell+1}^n x_i  & \leq (1+ \eps) \sum_{i=\ell+1}^n \frac{\log i}{i} 
\leq (1+ \eps) \int_{\ell}^n \frac{\log x}{x} dx 
\\
	& = (1+ \eps) \left( \frac{(\log n)^2}{2} -
\frac{(\log \ell)^2}{2} \right),
	\end{align*}
and hence  for any (fixed) $\ell \geq \ell_0$, but then also for 
any $\ell \geq 1$, we have
$$ 
\limsup_{n  \to \infty} \frac{2}{(\log n)^2} \sum_{i= \ell+1}^n x_i 
\leq (1+ \eps).
$$
Also, there exists $\ell_1 \in \N$ such that for all $ n > \ell \geq \ell_1$ 
we have
	\begin{align*}
		\sum_{i=\ell+1}^n x_i &  \geq (1- \eps) \sum_{i=\ell+1}^n \frac{\log i}{i} 
\geq (1- \eps) \int_{\ell+1}^{n+1} \frac{\log x}{x} dx 
\\
		&	=  (1- \eps) \left( \frac{(\log (n+1))^2}{2} - \frac{(\log (\ell+1))^2}{2} \right)
	\end{align*}
and hence  for any (fixed) $\ell \geq \ell_1$, but then also for 
any $\ell \geq 1$, we have
$$ 
\liminf_{n  \to \infty} \frac{2}{(\log n)^2} \sum_{i= \ell+1}^n x_i 
\geq (1- \eps),
$$
and the result follows.
\end{proof}

 For $a \in \R$ we let $\lfloor a \rfloor := \sup(\Z \cap (-\infty,a])$
 and $\lceil a \rceil
 := \inf(\Z \cap [a,+\infty))$.

\begin{proof}[Proof of Lemma \ref{l:genrec}]
	Set $I_n^* := I_n U_{n-1}$ for $n > k $
	and $Y^*_n := Y_n U_n$ for $n \geq k$.
	By \eqref{e:Enev},  $Y_n^* \sim c \, (\log n)/n$ as $n \to \infty$, a.s.
Hence  by Lemma \ref{lemsum},
 almost surely
	\begin{align}
\sum_{i=k}^n Y_i^* \sim \frac{ c \, (\log n)^2 }{2}.
\label{0116b}
	\end{align} 
By the uniform integrability condition the
 random variables
	$(nY_n^*/\log n, n \geq k)$
are  also uniformly integrable.
	Therefore we have as $n \to \infty$ that
		\begin{align}
	\E[Y_n] \sim \frac{c \, \log n}{n};
			~~~~ ~~~~ ~~~~
			 ~~~~
	\E[Y_n^*] \sim \frac{c \, \log n}{n}.
\label{0308b}
		\end{align}
%
 Then by  Lemma \ref{lemsum} we obtain
	as $n \to \infty$
 that  almost surely 
 \begin{align}
	 \sum_{i = k+1}^n \E[Y^*_{i-1}] \sim \frac{c \, (\log n)^2}{2} 
	 ~~~~ \mbox{and} ~~~
	 \sum_{i = k+1}^n \E[Y_{i-1}] \sim \frac{c \, (\log n)^2}{2}.
	 \label{e:asympE}
 \end{align}
	By
	\eqref{0116c}, $\E[I_n |\cF_{n-1} ] \geq \E[I^*_n|\cF_{n-1}]
	= Y^*_{n-1}$ almost surely, so
 by \eqref{e:squeeze},
 $$
 \sum_{i = k+1}^n \E [ Y_{i-1}^*] \leq \sum_{i = k+1}^n \E[I_i]
 \leq \sum_{i = k+1}^n \E [ Y_{i-1}], 
 $$
 and combining this with \eqref{e:asympE},
 we obtain (\ref{0125ac}).

Next we prove \eqref{0125bc}.
For $n > k$, 
let $Z_n:= I^*_n - Y^*_{n-1}$, and 
	set
	$N_n^* := \sum_{i = k+1}^n I_i^*$. Then
			\begin{align}
N^*_{n} = \left( \sum_{i = k+1}^n Y^*_{i-1} \right) 
+ \sum_{i = k+1}^n Z_i. 
\label{0116d}
			\end{align}
By (\ref{0116c}) we have $\E[Z_n|\F_{n-1}]=0$ and by
	\eqref{e:squeeze} $Y_{n-1} \geq 0$ a.s.
Also by \eqref{0116c}, 
$$
\E[Z_n^2|\F_{n-1}] = (1-Y^*_{n-1})^2 Y^*_{n-1} + (Y^*_{n-1})^2 (1-Y^*_{n-1}) 
= Y^*_{n-1}(1-Y^*_{n-1}) \leq Y_{n-1}^*,
$$ 
so that
by (\ref{0308b}) we obtain as $n \to \infty$ that
$
\E[Z_n^2] = 
 O \big(n^{-1} \log n \big).
$
Setting $b_n : = (\log n)^2$, we have 
as $n \to \infty $ that
$
b_n^{-2} \E[Z_n^2] = O \big(  n^{-1} (\log n)^{-3} \big),
$ 
and therefore  by the condensation test,
$$
\sum_n
b_n^{-2} \E[Z_n^2]  < \infty.
$$
Hence by a consequence of the martingale convergence theorem
(see \cite{Feller}, Theorem VII.9.3, page 243) we have almost surely
that as $n \to \infty$,
$$
(\log n)^{-2}  \sum_{i = k+1}^n Z_i \to 0.
$$
Combined with \eqref{0116d}
and \eqref{0116b} this shows
that $N^*_{n} \sim (c/2) (\log n)^2$, a.s.
But also by \eqref{e:Enev}, $N_{n} - N_{n}^*$ is a.s.
bounded in $n$, so
 \eqref{0125bc} follows.

To obtain \eqref{0125cc}, let $\eps \in (0,1)$.
For each $m \in \N$ set
$n^+(m) := \lceil \exp( \sqrt{2m(1+\eps)/c} \, ) \rceil$
and
$n^-(m) := \lfloor \exp( \sqrt{2m(1-\eps)/c} \, ) \rfloor$.
Then by (\ref{0125bc}), almost surely 
we have for large enough $m$ that
\begin{align*}
	N_{n^+(m)} &  > (1+\eps)^{-1}  (c/2) (\log n^+(m))^2 \geq m;
\\
	N_{n^-(m)}  & \leq (1+\eps)  (c/2) (\log n^-(m))^2 < m,
\end{align*}
so that $n^-(m) \leq \nu_m \leq n^+(m)$, and hence
$$
(1- \eps)
\exp \big(\sqrt{2(1- \eps)/c} \big) 
\leq \nu_m^{1/\sqrt{m}} \leq
(1+ \eps) 
\exp \big( \sqrt{2(1+\eps)/c} \, \big)
.
$$
Since $\eps$ can be arbitrarily small, this yields
\eqref{0125cc}.
\end{proof}

 \section{Preliminaries}
 \allco

 \subsection{Further notation and terminology}
 \label{ss:notation}
We use $\lambda_d$ to denote $d$-dimensional Lebesgue measure.
Let $\mu$ denote the common probability distribution of $X_i, i \in \N$,
i.e. $\mu(dx) = f(x) dx$. Let $o:= (0,\ldots,0)$, the origin in $\R^d$.

 Given $n \in \N$, we write $[n]$ for $\{1,\ldots,n\}$.
 Given $I \subset [n]$, we write $\cX_I$ for $\{X_i: i \in I\}$
 (so $\cX_n = \cX_{[n]}$).
 We use $|\cdot|$ to denote the number of elements of a finite set.

For $i \in [d]$ let $e_i $ denote the $i$th coordinate vector
in $\R^d$
(so $e_1= (1,0,\ldots,0)$ and so on).
Let $\mathbb H_2$
denote the linear span of $e_1$ and $e_2$ (a two-dimensional subspace
of $\R^d$). For $x \in \mathbb H_2$ and $r \geq 0$, set $B_2(x,r):=
B(x,r) \cap \mathbb H_2$ (a two-dimensional ball/disk).

For distinct $x,y \in \R^d$ let $[x,y]$ denote the line segment from $x$ to $y$,
i.e. the convex hull of $\{x,y\}$.
 For non-empty bounded $D \subset \R^d$,  let
 $\dist(x,D):= \inf_{z \in D} \|z-x\|$, and
 $\diam(D) := \sup_{u,v \in D} \|u-v\|$.
 Also for $r>0$ let
	$\kappa(D,r)$ be the number of deterministic
	balls of radius $r$ needed
	to cover $D$, i.e.
	 $\kappa(D,r) := \min \{n \in \N: \exists x_1,\ldots,x_n
	\in \R^d$ with $D \subset \cup_{i=1}^n B(x_i,r)\}$.



Given $n,k \in \N$ and $r >0$,
	let $F_{n,k,r}$ denote the region covered at least $k$ times by the
	balls of radius $r$ centred on $X_1,\ldots,X_n$, i.e.
	$F_{n,k,r}:= \{x \in \R^d: \cX_n(B(x,r)) \geq k\}$.

 \subsection{Geometrical preliminaries}

\begin{defn}[Sphere condition]
	Suppose $\partial A \in C^{1,1}$.
        For $z \in \partial A$ let $\hat n_z$ be the unit normal to $\partial A$ at $z$ pointing inside $A$.

        Given $\tau \geq 0$,
        let us say $\tau $ satisfies the {\em sphere condition}
         for $A$ if, for all $x \in \partial A$,
        we have $B(x+ \tau \hat n_x , \tau) \subset A$
        and  $B(x -\tau \hat n_x, \tau) \cap A = \{x\}$.

Let $\tau(A)$ denote the supremum of the set of all $\tau$ satisfying
the sphere condition for $A$.
\end{defn}
\begin{lemm}[Sphere condition lemma]
	Suppose $\partial A \in C^{1,1}$. Then
        $\tau(A) >0$; that is,
        there exists a constant $\tau >0$ such that
        $\tau$ satisfies the sphere condition
         for $A$.
\end{lemm}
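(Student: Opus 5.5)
The plan is to prove the sphere condition lemma by first establishing it locally, using the uniform Lipschitz control on the gradient of the local graph functions, and then globalising by compactness of $\partial A$. Since $A$ is compact, $\partial A$ is compact. For each $x \in \partial A$ the $C^{1,1}$ assumption gives a neighbourhood $U_x$ and, after a rotation, a function $\phi_x$ with Lipschitz gradient such that $\partial A \cap U_x$ is the graph of $\phi_x$. Shrinking $U_x$ to a cylinder $C_x = \{(u,t): \|u-u_x\|<\rho_x, |t-t_x|<h_x\}$, we may assume the graph separates $C_x$ into the part above and the part below. One of these parts lies in $A^o$ and the other in $\R^d\setminus A$: each part is connected and disjoint from $\partial A$, and both are nonempty, since otherwise $x$ would not be a boundary point (using $\overline{A^o}=A$). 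By compactness we cover $\partial A$ by the smaller cylinders $C'_{x_1},\ldots,C'_{x_m}$ (with half the radius and height), and we let $L$ be the maximum of the Lipschitz constants of $\nabla\phi_{x_j}$.

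The local step is to show that at a point $z=(u_0,\phi(u_0))$ of the graph in $C'_{x_j}$, a ball of radius $\tau$ tangent at $z$ and lying on the $A$ side stays inside $A$, for $\tau$ small depending only on $L$ and the cylinder sizes. By the Lipschitz gradient bound,
\[
|\phi(u)-\phi(u_0)-\nabla\phi(u_0)\cdot(u-u_0)| \le \tfrac{L}{2}\|u-u_0\|^2 .
\]
The graph therefore lies between two paraboloids tangent at $z$, while a ball of radius $\tau$ tangent at $z$ is, near $z$, contained in the region beyond a paraboloid of curvature about $1/\tau$. We choose $\tau < 1/(L(1+\max_j\|\nabla\phi_{x_j}\|_\infty^2)^{3/2})$, with a further adjustment for the tilt, and also take $\tau$ much smaller than $\min_j \rho_{x_j}, h_{x_j}$. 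With these choices the inner ball $B(z+\tau\hat n_z,\tau)$ lies entirely within $C_{x_j}$, sits on the $A$ side of the graph except at $z$, and so lies in $A$. In the same way, the outer ball $B(z-\tau\hat n_z,\tau)$ minus $\{z\}$ lies strictly on the complementary side, in $\R^d\setminus A$, which gives $B(z-\tau\hat n_z,\tau)\cap A=\{z\}$.

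The main obstacle is making the comparison between the ball and the graph precise in the tilted case. The cleanest route is to work directly with distances: for $y=(u,\phi(u))$, compute $\|y-(z\pm\tau\hat n_z)\|^2-\tau^2$ and show that it is positive for $u\neq u_0$. Expanding with $\hat n_z=\pm(-\nabla\phi(u_0),1)/\sqrt{1+|\nabla\phi(u_0)|^2}$, this quantity equals
\[
\|u-u_0\|^2 + \delta^2 - 2\tau\,\delta'\;\geq\; \|u-u_0\|^2\,(1 - \tau L c),
\]
where $\delta=\phi(u)-\phi(u_0)$ and $\delta'$ is the normal component of the second-order remainder; this is positive once $\tau Lc<1$. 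The resulting statement is that no boundary point other than $z$ lies in the closed balls. Together with the facts that each ball is connected, stays inside $C_{x_j}$, and touches the correct side near $z$, this shows that each ball lies on one side of $\partial A$. Hence the inner ball lies in $A$ and the outer ball meets $A$ only at $z$. Any such $\tau$ witnesses $\tau(A)>0$. For $d=1$ the statement is trivial: take $\tau$ less than half the minimal gap between the finitely many boundary points.
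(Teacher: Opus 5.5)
Your proof is correct. The paper gives no argument of its own here; it simply cites Lemma~7 of \cite{sphere-condition-lp}. Your self-contained proof is therefore the different route.

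It runs as follows:
\begin{itemize}
\item local graph charts over cylinders;
\item the Taylor bound $|\phi(u)-\phi(u_0)-\nabla\phi(u_0)\cdot(u-u_0)|\le \frac{L}{2}\|u-u_0\|^2$ from the Lipschitz gradient;
\item the identity $\|y-(z\pm\tau\hat n_z)\|^2-\tau^2=\|y-z\|^2\mp 2\tau\,(y-z)\cdot\hat n_z\ge(1-\tau L)\|u-u_0\|^2$ for $y=(u,\phi(u))$;
\item a connectedness argument to put each ball on the correct side;
\item compactness of $\partial A$ to make $\tau$ uniform.
\end{itemize}

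Two small tidy-ups:
\begin{itemize}
\item The distance computation already shows that $\tau L<1$ together with $4\tau<\min_j\min(\rho_{x_j},h_{x_j})$ suffices. So the paraboloid/curvature paragraph and the tilt-dependent bound on $\tau$ can be dropped.
\item Apply the connectedness step to the punctured ball $B(z\pm\tau\hat n_z,\tau)\setminus\{z\}$. This set is connected, lies in $C_{x_j}$, misses $\partial A$, and contains $z\pm\varepsilon\hat n_z$ for small $\varepsilon>0$.
\end{itemize}

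What your version buys is that it makes explicit something the citation leaves implicit. The paper's definition of $\partial A\in C^{1,1}$ only says that $\partial A\cap U$ is a graph, not that $A\cap U$ is an epigraph. Your appeal to the standing assumption $\overline{A^o}=A$ is exactly what rules out the bad cases. Both sides of the graph cannot lie in $A^o$, since then $x\in A^o$. Both cannot lie outside $A$, since then $x\notin\overline{A^o}$. State it in that form rather than as ``both are nonempty''. Without that assumption the lemma is false: take a closed disk together with a disjoint circle.

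The citation, in turn, buys brevity and defers these routine details to the literature.
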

\begin{proof}
        See     \cite[Lemma 7]{sphere-condition-lp}.
\end{proof}

\begin{lemm}[Balls near the boundary]
	\label{l:bdyballs}
	Suppose $\partial A \in C^{1,1}$.  Let $\eps \in (0,1)$.  Then:
	
	(i) There exists $\delta >0$ such that
	for all $r \in (0,\delta)$
	and all $x \in A^{(\eps r)}$
	we have $\lambda_d(B(x,r) \cap A) \geq (1+(\eps/2)) \theta r^d/2$;
	
	(ii)  There exist $\delta' >0, \delta'' >0$ such that if
	$s \in (0, \delta')$ and
	$x \in A \setminus A^{(\delta''s)}$
	then $|\lambda_d(B(x,s) \cap A) -  (\theta/2) s^d|
	< \eps s^d$.

	(iii)
	There exists $\delta''' > 0$ such that if $r \in (0, \delta''')$
	and $x \in A$ then $\lambda_d(B(x,r) \cap A)
	\geq ((\theta/2) - \eps) r^d$.
\end{lemm}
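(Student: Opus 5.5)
The plan is to reduce all three parts to one two-sided ``slab'' estimate that follows directly from the sphere condition, and then to compare volumes with half-balls. Fix $\tau>0$ satisfying the sphere condition for $A$, which is possible by the Sphere condition lemma. For $z \in \partial A$ and $y \in \R^d$ write $h_z(y) := (y-z)\cdot \hat n_z$. The key observation is an elementary computation. If $\|y-z\| \leq \rho$, then
\begin{align*}
\|y - (z+\tau \hat n_z)\|^2 = \|y-z\|^2 - 2\tau h_z(y) + \tau^2 ,
\end{align*}
so $h_z(y) > \rho^2/(2\tau)$ forces $y \in B(z+\tau\hat n_z,\tau) \subset A$. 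By the same computation with $-\hat n_z$, $h_z(y) < -\rho^2/(2\tau)$ forces $y$ into the open ball $B(z-\tau \hat n_z,\tau)\setminus\{z\}$, which lies in $A^c$. Hence, within $B(z,\rho)$, the set $A$ contains the half-space piece $\{h_z > \rho^2/(2\tau)\}$ and is disjoint from $\{h_z < -\rho^2/(2\tau)\}$. Its boundary is confined to a slab of width $\rho^2/\tau$.

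Next I locate $x$ relative to the boundary. For $x \in A$ let $t := \dist(x,\partial A)$. If $t \geq r$, then $B(x,r) \subset A$, and all three statements are trivial for the relevant radius, because $\theta r^d \geq (1+\eps/2)\theta r^d/2$. Otherwise $t<r$, and I take $z \in \partial A$ a nearest boundary point. For $r<\tau$, the interior and exterior balls force $x = z + t\hat n_z$: the sphere at $z$ is tangent to the sphere $\partial B(x,t)$. Apply the slab estimate with $\rho := t + r \leq 2r$. Then $B(x,r) \subset B(z,\rho)$, and
\begin{align*}
B(x,r)\cap\{h_z > 2r^2/\tau\} \subset B(x,r) \cap A \subset B(x,r) \cap \{h_z \geq -2r^2/\tau\}.
\end{align*}
In coordinates centred at $x$, the condition $h_z > u$ reads $(y-x)\cdot \hat n_z > u - t$. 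Write $\theta' $ for the volume of the unit ball in $\R^{d-1}$. The volume of $\{y\in B(x,r) : (y-x)\cdot\hat n_z \geq -a\}$ is $r^d\bigl(\theta/2 + g(a/r)\bigr)$ for $0\le a\le r$, where
\begin{align*}
g(\alpha) := \int_0^\alpha \theta'(1-s^2)^{(d-1)/2}\,ds .
\end{align*}
The function $g$ is concave on $[0,1]$ with $g(0)=0$ and $g(1)=\theta/2$, and $|g(\alpha)-g(\beta)| \leq \theta'|\alpha-\beta|$.

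Each part then follows from this formula. For (i), $x \in A^{(\eps r)}$ gives $t \geq \eps r$. Thus $\lambda_d(B(x,r)\cap A) \geq r^d(\theta/2 + g(\eps - 2r/\tau))$. By concavity $g(\eps) \geq \eps\theta/2$, and the Lipschitz bound gives $g(\eps - 2r/\tau) \geq \eps\theta/2 - 2\theta' r/\tau$. This exceeds $\eps\theta/4$ once $r < \delta$ for small $\delta$, which yields the claimed lower bound. For (ii), $x \notin A^{(\delta''s)}$ gives $t < \delta'' s$. The sandwich then shows that $\lambda_d(B(x,s)\cap A)$ lies between $s^d(\theta/2 - \theta'(2s/\tau))$ and $s^d(\theta/2 + \theta'(\delta'' + 2s/\tau))$. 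Choosing $\delta''$ and $\delta'$ with $\theta'(\delta''+2\delta'/\tau) < \eps$ gives the two-sided bound. For (iii), $t \geq 0$ always, so the lower half of the sandwich gives $\lambda_d(B(x,r)\cap A) \geq r^d(\theta/2 - 2\theta' r/\tau)$ in the case $t<r$. Combined with the trivial case $t\ge r$, it suffices to take $\delta''' < \min(\tau, \eps\tau/(2\theta'))$.

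The step I expect to need most care is the geometric reduction, namely that the nearest boundary point $z$ satisfies $x = z+t\hat n_z$ and that $B(x,r)$ sits inside $B(z,t+r)$ on the correct side. This is where the two-sided sphere condition is used, and its uniformity in $z$ is what makes $\delta,\delta',\delta'',\delta'''$ independent of $x$. For $d=1$ the set $A$ is a finite union of intervals, the slab error vanishes for small $r$, and the claims are immediate.
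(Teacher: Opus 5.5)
Your proof is correct, and it takes a different route from the paper. The paper does not argue geometrically here. It quotes (i) and (iii) from Lemmas 2.3(i) and 2.4(i) of \cite{PYconnect}. It then gets (ii) by taking a limit in Lemma 2.3(ii) there for the upper bound, and using part (iii) for the lower bound.

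You instead derive all three parts from the sphere condition lemma alone, through one quantitative sandwich. Inside $B(z,\rho)$, the set $A$ lies between the half-spaces $\{h_z > \rho^2/(2\tau)\}$ and $\{h_z \geq -\rho^2/(2\tau)\}$. The volume of a ball cut by a half-space is then computed explicitly through $g$. Your individual steps check out:
\begin{itemize}
\item The tangency step is sound. It needs only $B(x,t)\subset A$, the exterior ball at $z$, and $t>0$: two closed balls meeting in exactly one point must be externally tangent. The restriction $r<\tau$ is not used there.
\item The containment $B(x,r)\subset B(z,t+r)$ with $t+r\leq 2r$ correctly gives the $2r^2/\tau$ slab.
\item The concavity bound $g(\eps)\geq \eps\theta/2$ is exactly what produces the factor $1+\eps/2$ in (i).
\end{itemize}

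What your route buys is self-containedness, modulo the sphere condition lemma, which the paper also cites. It also gives explicit constants with an $O(r/\tau)$ error term, visibly uniform in $x$. The paper's route buys only brevity.

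Two small presentational points:
\begin{itemize}
\item In (ii) the case $t\geq s$ is not ``trivial'': for a ball inside $A$ the two-sided bound fails. It simply cannot occur once $\delta''<1$, and you should say so.
\item In (iii) with $d$ large one can have $\theta'<\eps$, so $2r/\tau$ may exceed $1$ for $r<\delta'''$. The stated bound $r^d(\theta/2 - 2\theta' r/\tau)$ remains valid, since it is then nonpositive because $\theta/2 = g(1)\leq \theta'$. It is cleaner, though, to extend $g$ by $g(\alpha)=\theta/2$ for $\alpha\geq 1$ so the formula is literally correct.
\end{itemize}
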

	\begin{proof}
		For (i) and (iii) we use \cite[Lemma 2.3(i)]{PYconnect} 
		and \cite[Lemma 2.4(i)]{PYconnect} respectively.
		For (ii) we take $r \downarrow 0$ in
		\cite[Lemma 2.3(ii)]{PYconnect}
		for an upper bound
		on $\lambda_d(B(x,s) \cap A)$, and use (iii)
		for a lower bound.

		In the case $d=1$ we interpret the $C^{1,1}$ condition
		as meaning $A$ is a finite union of intervals, and
		then  (i), (ii) and (iii) all clearly hold.
	\end{proof}

\begin{lemm}[Cone condition lemma]
	\label{l:conecond}
	Suppose either $\partial A \in C^{1,1}$ or $A$ is convex.
	There exists  constants $\tau_1(A) \in (0,\frac12]$
	and $\delta_1 = \delta_1(A) >0$
	such that for all $x \in A$ and all $r \in (0,\tau_1(A))$,
	$\lambda_d(B(x,r) \cap A) \geq \delta_1 r^d$.
\end{lemm}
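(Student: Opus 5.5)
We treat the two hypotheses separately, and in each case take $\tau_1(A)$ small and $\delta_1$ a suitable positive constant. Throughout we use the fact that if $B(x,r)\cap A$ contains a ball of radius $cr$, then $\lambda_d(B(x,r)\cap A)\ge \theta c^d r^d$.

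\emph{Convex case.} Since $A$ is compact with $\overline{A^o}=A\neq\emptyset$, it has non-empty interior. We therefore fix a closed ball $B(z_0,\rho)\subset A$ with $\rho>0$, and set $D:=\diam(A)<\infty$. Given $x\in A$ and $r\in(0,\tau_1)$ with $\tau_1\le \min(\frac12,D)$, put $t:=r/D\in(0,1]$ and consider the homothetic image of the ball,
$$
x+t\big(B(z_0,\rho)-x\big)=B\big(x+t(z_0-x),\,t\rho\big).
$$
By convexity this ball lies in $A$, since each of its points is a convex combination of $x$ and a point of $B(z_0,\rho)$. Each point $y$ of it satisfies $\|y-x\|\le t(\|z_0-x\|+\rho)\le t\cdot 2D$, because $\rho\le D$ and $\|z_0-x\|\le D$. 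To ensure containment in $B(x,r)$, we instead use $t:=r/(2D)$. The ball then has radius $r\rho/(2D)$, so $\delta_1:=\theta(\rho/(2D))^d$ works. If $D=0$ the set $A$ is a point, which contradicts non-empty interior.

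\emph{$C^{1,1}$ case.} Here we use the sphere condition lemma to get $\tau>0$ with $B(z+\tau\hat n_z,\tau)\subset A$ for all $z\in\partial A$. Take $\tau_1:=\min(\frac12,\tau)$ and $r<\tau_1$, and split on $\dist(x,\partial A)$.

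If $\dist(x,\partial A)\ge r/2$, then since $x\in A$ and $A$ is closed, we have $B(x,r/2)\subset A$. (For the open ball this follows from connectedness of the ball together with $x\in A$; the closure is then in $A$.) This gives volume at least $\theta 2^{-d}r^d$.

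Otherwise, let $z\in\partial A$ be a nearest boundary point, so $\|x-z\|<r/2$. The ball $B(z+(r/4)\hat n_z,r/4)$ is contained in $B(z+\tau\hat n_z,\tau)\subset A$, because both balls are internally tangent at $z$ along the same normal and $r/4\le\tau$. Its points are within distance $r/2$ of $z$, hence within distance $r$ of $x$. So it lies in $B(x,r)\cap A$ and has volume $\theta 4^{-d}r^d$.

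We take $\delta_1:=\theta 4^{-d}$, and in the combined statement the minimum of the constants from the two cases.

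The only mildly delicate step is the nesting of tangent balls, which reduces to $\|y-(z+\tau\hat n_z)\|\le \|y-(z+s\hat n_z)\|+(\tau-s)\le\tau$ for $s=r/4$. For $d=1$, where $A$ is a finite union of nondegenerate intervals, the claim is immediate once $\tau_1$ is less than half the minimal interval length.
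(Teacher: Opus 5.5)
Your proof is correct. In the convex case you use the same idea as the paper: $A$ contains the convex hull of $x$ and a fixed interior ball $B(z_0,\rho)$. The paper asserts, somewhat informally, that this hull meets $B(x,r)$ in a cone of aperture bounded below. You make this quantitative by exhibiting the homothetic ball $B(x+t(z_0-x),t\rho)$ with $t=r/(2D)$ inside $B(x,r)\cap A$. Your version is uniform in $x$ with no need to examine the shape of the hull when $x$ is near $B(z_0,\rho)$, and it gives the explicit constant $\theta(\rho/(2D))^d$.

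In the $C^{1,1}$ case your route genuinely differs. The paper simply invokes Lemma \ref{l:bdyballs}(iii), imported from \cite{PYconnect}, which gives the near-optimal bound $((\theta/2)-\eps)r^d$. You instead derive the cruder bound $\theta 4^{-d}r^d$ directly from the interior-ball half of the sphere condition, splitting on $\dist(x,\partial A)$:
\begin{itemize}
\item if $\dist(x,\partial A)\ge r/2$, connectedness puts $B(x,r/2)$ inside $A$;
\item otherwise, the internally tangent ball $B(z+(r/4)\hat n_z,r/4)$ at a nearest boundary point $z$ lies in $B(x,r)\cap A$.
\end{itemize}
Your argument is self-contained modulo the sphere condition lemma, and it suffices here because only some positive $\delta_1$ is required. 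The paper's citation costs it nothing, since the sharp constant of Lemma \ref{l:bdyballs}(iii) is needed anyway elsewhere, for example in Lemmas \ref{l:upper} and \ref{l:Wtoint}.
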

\begin{proof}
	Suppose $\partial A \in C^{1,1}$. In this case
	the result is immediate from Lemma \ref{l:bdyballs}(iii).

	Suppose instead that $A$ is convex. Choose $x_0 \in A$ and $\delta' >0$
	such that $B(x_0,\delta') \subset A$.
	Suppose
	$0 < r < \delta'$
	and $x \in A$. Let $A'$ be the convex hull of $\{x\} \cup 
	B(x_0,\delta')$. Then $A' \subset A$, and since we assume $A$ 
	is bounded, the set $B(x,r) \cap A'$ is the intersection of
	$B(x,r)$ with a cone with vertex $x$ having 
	aperture bounded below by some positive constant
	independent of $x$. Hence there is a constant $\delta''$
	independent of $x$  and $r$ such that $\lambda_d(B(x,r) \cap A)
	\geq \lambda_d(B(x,r) \cap A') \geq \delta'' r^d$. 
\end{proof}

\begin{lemm}[Covering boundary regions]
	\label{l:Covbdy}
	Suppose $\partial A \in C^{1,1}$ or $A$ is convex. Let $K >0$.
	Then 
	(i) $\kappa(\partial A,r) = O(r^{1-d})$ as $r \downarrow 0$, and
	(ii) $\kappa(A \setminus A^{(Kr)},r) = O(r^{1-d})$ as $r \downarrow 0$.
\end{lemm}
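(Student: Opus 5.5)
The plan is a volume-packing argument: we compare the measure of a neighbourhood of $\partial A$ with the measure of a family of disjoint small balls centred on $\partial A$. Part (ii) will then follow from part (i) by enlarging the radius.

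For (i), fix $r>0$ small and let $\{x_1,\ldots,x_m\}$ be a maximal $r$-separated subset of $\partial A$, meaning $\|x_i-x_j\|>r$ for $i\ne j$. By maximality every point of $\partial A$ lies within distance $r$ of some $x_i$, so $\partial A \subset \cup_i B(x_i,r)$ and $\kappa(\partial A,r)\le m$. The balls $B(x_i,r/2)$ are pairwise disjoint. It therefore suffices to bound $\sum_i \lambda_d(B(x_i,r/2))$, or a fixed fraction of it, by $O(r)$.

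The fraction we use is the part inside $A$. Each $B(x_i,r/2)$ is contained in $\{y:\dist(y,\partial A)\le r/2\}$. By Lemma \ref{l:conecond}, which holds under either hypothesis, for $r/2<\tau_1(A)$ we have $\lambda_d(B(x_i,r/2)\cap A)\ge \delta_1 (r/2)^d$. Since these sets are disjoint and all lie in $A\setminus A^{(r/2)}$, up to boundary-distance conventions, we get
\[
m\,\delta_1 (r/2)^d \le \lambda_d\big(\{y\in A: \dist(y,\partial A)\le r/2\}\big).
\]

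The main step is then to show that $\lambda_d(\{y\in A:\dist(y,\partial A)\le s\})=O(s)$ as $s\downarrow 0$.

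In the convex case we fix $x_0$ and $\rho$ with $B(x_0,\rho)\subset A$. Then $\{y\in A:\dist(y,\partial A)\le s\}$ is contained in $A\setminus (x_0+(1-s/\rho)(A-x_0))$. This holds because if $y=x_0+t(a-x_0)$ with $a\in A$ and $t\le 1-s/\rho$, then the convex hull of $\{a\}\cup B(x_0,\rho)$ contains $B(y,(1-t)\rho)\supseteq B(y,s)$. Hence the measure is at most $(1-(1-s/\rho)^d)\lambda_d(A)=O(s)$.

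In the $C^{1,1}$ case we cover $\partial A$ by finitely many charts in which $\partial A$ is a Lipschitz graph $x_d=\phi(x')$ over a bounded domain. Within a chart, points of $A$ at distance at most $s$ from $\partial A$ satisfy $|x_d-\phi(x')|\le C s$, since the distance to a Lipschitz graph is comparable to the vertical distance. This gives measure $O(s)$ by Fubini. Points in the $s$-neighbourhood near but outside the chart regions are handled by a compactness/Lebesgue-number argument once $s$ is small. For $d=1$ the set $\partial A$ is finite and everything is trivial. Combining the bounds yields $m=O(r\cdot r^{-d})=O(r^{1-d})$, which proves (i).

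For (ii), every $x\in A\setminus A^{(Kr)}$ has $B(x,Kr)\not\subset A$, so some point of $\partial A$ lies within $Kr$ of $x$. Cover $\partial A$ by $\kappa(\partial A,r)$ balls $B(z_j,r)$. Then $A\setminus A^{(Kr)}\subset \cup_j B(z_j,(K+1)r)$. Each ball $B(z_j,(K+1)r)$ can be covered by a number $c(K,d)$ of balls of radius $r$, where $c(K,d)$ does not depend on $r$, by scaling a fixed covering of $B(o,K+1)$ by unit balls. Thus $\kappa(A\setminus A^{(Kr)},r)\le c(K,d)\,\kappa(\partial A,r)=O(r^{1-d})$.

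I expect the only real obstacle to be the $O(s)$ bound on the boundary-layer volume in the $C^{1,1}$ case, specifically the bookkeeping with the charts. An alternative there is to use the sphere condition lemma: for $s<\tau(A)$, each point within $s$ of $\partial A$ has a unique nearest boundary point. One then bounds the layer volume by a tubular-neighbourhood (Steiner-type) estimate, $O(s\,\mathcal H^{d-1}(\partial A))$.
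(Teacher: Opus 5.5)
Your proposal is correct, but for part (i) it takes a different route from the paper.

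\textbf{The paper's route for (i).} In the $C^{1,1}$ case the paper covers $\partial A$ directly. In a chart where $\partial A$ is the graph of a function $g$ with Lipschitz constant $M$ over a ball $V\subset\R^{d-1}$, it covers $V$ by $O(r^{1-d})$ balls of radius $r/(2M)$. It then lifts their centres to the graph, and the balls of radius $r$ about the lifted centres cover the graph. For convex $A$ the paper just appeals to finiteness of the surface area via the Steiner formula.

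\textbf{Your route for (i).} You use a maximal $r$-separated set, the disjoint balls $B(x_i,r/2)$ and the lower bound from Lemma~\ref{l:conecond}. This reduces both cases to one estimate, $\lambda_d(\{y\in A:\dist(y,\partial A)\le s\})=O(s)$, which you then prove case by case.

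\textbf{What each approach buys.} In the convex case your approach is a genuine gain. The homothety argument is self-contained and spells out the packing step that the paper's citation leaves implicit. Finiteness of $\mathcal H^{d-1}(\partial A)$ alone does not bound covering numbers; what is really needed is a parallel-volume bound of exactly the kind you prove. In the $C^{1,1}$ case your approach is a detour. Your Fubini estimate needs the same Lipschitz charts plus a nearest-point/Lebesgue-number argument. With the charts in hand, the paper's lifting gives the covering in a few lines and needs neither the cone condition nor any volume estimate.

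\textbf{A small fix.} As written, the containment $\{y\in A:\dist(y,\partial A)\le s\}\subset A\setminus\big(x_0+(1-s/\rho)(A-x_0)\big)$ fails for points at distance exactly $s$; take $A=B(x_0,\rho)$ to see this. Using the factor $1-2s/\rho$ instead repairs it and leaves the $O(s)$ bound unchanged.

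\textbf{Part (ii).} You enlarge each ball of an $r$-cover of $\partial A$ to radius $(K+1)r$ and recover it with $c(K,d)$ balls of radius $r$. This is the natural argument, and presumably what the paper's appeal to the proof of \cite[eq.~(6.17)]{MPCov} amounts to.
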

\begin{proof}
	If $A$ is convex, then (i) holds because
	 $\partial A$ has finite $(d-1)$-dimensional Hausdorff measure
	by the Steiner formula; see e.g. \cite{CSKM}, or 
	\cite[eq. (A.23)]{LP}.

	To prove (i) in the case when $\partial A \in C^{1,1}$,
	let $x \in \partial A$ and take bounded open $U \subset \R^{d}$,
	such that $\partial A \cap U$, after a rotation, is the graph
	of a Lipschitz-continuously differentiable function $\phi$ defined
	on an open set in $\R^{d-1}$. By a compactness argument, 
	it suffices to prove that $\kappa(\partial A \cap U,r) = O(r^{1-d})$.
	Without loss of generality, we can and do assume the rotation
	in question is the identity map. Thus to prove (i),
	it suffices  to prove that
	for any  open ball $V \subset \R^{d-1}$
	and Lipschitz-continuously differentiable
	$g:V \to \R$ we have
	\begin{align}
		\kappa (\{(x,g(x)): x \in V \},r) = O(r^{1-d})~~~~ {\rm as}~ r
		\downarrow 0.
		\label{e:Lipcov}
	\end{align}
	Consider such a $V$ and such a $g$.
	By the differentiability condition
 and the intermediate
	value theorem, there is a constant $M \geq 1$
	such that 
	$$
	|g(x) - g(y)| \leq M \|x-y\|, ~~~ x,y \in V.
	$$
	Cover $V$ by 
	balls of radius $r/(2M)$,
	with centres denoted $x_{r,1},\ldots,x_{r,m(r)}$ say,
	where $x_{r,i} \in V$ for each $i \in [m(r)]$, and
	with $m(r) = O(r^{1-d})$ as $r \downarrow 0$. 
	Then for any $x \in V$, choosing $i = i(x) \in [m(r)]$
	with $\|x- x_{r,i}\| \leq r/(2M)$, we have 
	$|g(x) -g(x_{r,i}) | \leq M\|x-x_{r,i}\| \leq r/2$ and hence
	$\|(x,g(x))- (x_{r,i},g(x_{r,i})) \| \leq (r/2) + r/(2M) \leq r$ 
	so the balls $B((x_{r,i},g(x_{r,i})),r), i \in [m(r)]$ 
	cover the set $\{(x,g(x)):x \in V\}$. Thus we have
	\eqref{e:Lipcov}, and hence (i).

	Using (i),  we can prove (ii)
	by following
	the proof of \cite[eq. (6.17)]{MPCov} with minor modifications.
\end{proof}
	\subsection{Probabilistic preliminaries}

	Recall the definitions of $R_{n,k}$ and $\tR_{n,k}$
	at \eqref{e:defRnk} and \eqref{eqmaxspac}.
	We shall use
 the following result from \cite[Proposition 4.9 and
 Theorem 4.1]{MPCov}.

\begin{lemm}[Almost sure asymptotics for $R_{n,k}$ and $\tR_{n,k}$]
	\label{lem1}
Let  $k \in \N$.
	Then
	\begin{align}
\lim_{n \to \infty} (n \theta  f_0 \tR_{n,k}^d /\log n) =  1,
~~~ a.s.
\label{0315b}
	\end{align}
If $B \subset A^o $ is compact
	with $\lambda_d(B) >0$ and $\lambda_d(\partial B)=0$,
	then almost surely,
	\begin{align}
\lim_{n \to \infty} (n \theta  f_0 R_{n,k}^d /\log n) =  1.
\label{0315a}
	\end{align}
	If $B=A$ and $\partial A \in C^{1,1} $ 
	then
	a.s.,
	\begin{align}
	\lim_{n \to \infty} (n \theta R_{n,k}^d/\log n) =
	\max( 1/f_0, (2-2/d)/f_1).
\label{0315c}
	\end{align}
\end{lemm}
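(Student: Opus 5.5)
This is exactly the content of \cite[Proposition 4.9 and Theorem 4.1]{MPCov}, so the proof consists of quoting those results. The main work is checking that the present hypotheses imply theirs. For \eqref{0315a} we need $B \subset A^o$ compact, $\lambda_d(B)>0$ and $\lambda_d(\partial B)=0$. For \eqref{0315c} we need $B=A$ with $\partial A \in C^{1,1}$. For \eqref{0315b} we need the regularity of $A$ ($C^{1,1}$ or convex) used in Theorem \ref{Thm2}. We also need $f$ continuous on $A$ with $f_0>0$. Since the later argument relies on these limits, I would also recall briefly why they hold, so the reader sees where $f_0$, $f_1$ and the factor $2-2/d$ come from.

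\emph{Upper bound.} Fix $\eps>0$ and define $r_n$ by $n\theta f_0 r_n^d=(1+\eps)\log n$.
\begin{itemize}
\item Cover $B$ (respectively $A^{(r_n)}$, or $A$ in the boundary case) by $O(r_n^{-d})$ balls of radius $\eta r_n$, with $\eta$ small.
\item If $R_{n,k}>r_n$, then some centre $x$ of this cover has $\cX_n(B(x,(1-\eta)r_n))<k$. For a single $x$ this has probability $O\big((\log n)^{k-1} n^{-(1+\eps)(1-\eta)^d}\big)$, because by continuity $\mu(B(x,s))\geq (f_0-o(1))\theta s^d$.
\item A union bound therefore gives a probability of order $n^{-\delta}$ for some $\delta>0$.
\item Apply Borel--Cantelli along the geometric subsequence $n_j=\lceil (1+\eps)^j\rceil$. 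Then use that $R_{n,k}$ and $\tR_{n,k}$ are non-increasing in $n$ to interpolate between consecutive $n_j$.
\end{itemize}
In the case $B=A$, the centres within distance $Kr_n$ of $\partial A$ need separate treatment. By Lemma \ref{l:Covbdy}(ii) there are only $O(r_n^{1-d})$ of them. By Lemma \ref{l:bdyballs}(ii),(iii) the relevant balls have $\mu$-mass at least $(f_1-o(1))(\theta/2-\eps)r^d$. With $n\theta r^d=c\log n$, the expected number of boundary holes is therefore about $n^{(1-1/d)-cf_1/2}$. This is summable-small exactly when $c>(2-2/d)/f_1$, which produces the maximum in \eqref{0315c}.

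\emph{Lower bound.} Set $n\theta f_0 r^d=(1-\eps)\log n$.
\begin{itemize}
\item Pick a point of $B^o$ (respectively of $A^o$) where $f$ is within $\eps$ of $f_0$. Near it, place $m_n\geq n^{c'}$ disjoint balls of radius $r$, for some $c'>0$.
\item Each such ball contains fewer than $k$ points with probability at least $n^{-(1-\eps/2)}$.
\item Poissonize and use independence over disjoint regions, or use negative association of the multinomial counts. Together with de-Poissonization this bounds the probability that none of these balls is a hole by $\exp(-n^{\eps/4})$. This is summable, so Borel--Cantelli applies directly without a subsequence.
\end{itemize}
For the boundary term with $c<(2-2/d)/f_1$, the same scheme uses $\asymp r^{1-d}$ disjoint half-balls centred on $\partial A$ near a point where $f\approx f_1$.

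The main obstacle is this lower bound near $\partial A$. One has to verify, via the sphere condition and Lemma \ref{l:bdyballs}(ii), that $\mu(B(x,r)\cap A)\le (f_1+\eps)(\theta/2+\eps)r^d$ uniformly for $x\in\partial A$, while keeping the chosen balls disjoint and numerous enough. Since this is carried out in \cite{MPCov}, here we simply invoke those results.
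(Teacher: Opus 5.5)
This is the paper's route: Lemma \ref{lem1} is proved there only by citing \cite[Proposition 4.9 and Theorem 4.1]{MPCov}, and your sketch is a fair outline of what those results do.

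\textbf{The gap: the hypothesis check for \eqref{0315c}.} Proposition 4.9 of \cite{MPCov} is stated for a $C^2$ boundary, and $C^{1,1}$ does not imply $C^2$. So quoting it does not cover the present assumption. The paper closes this with a remark that the proof in \cite{MPCov} still works under $C^{1,1}$. Your own sketch indicates why. The only boundary inputs it uses are that $\lambda_d(B(x,r)\cap A)$ is within $\eps r^d$ of $\theta r^d/2$ for $x$ near $\partial A$, and the $O(r^{1-d})$ covering bound. These are exactly Lemmas \ref{l:bdyballs} and \ref{l:Covbdy}, which are proved here from the sphere condition under $C^{1,1}$. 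You should make that point explicitly rather than ``simply invoke'' \cite{MPCov} for the boundary lower bound, since the cited statement does not apply as is.

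\textbf{A hidden hypothesis in your lower bound for \eqref{0315a}.} You pick a point of $B^o$ where $f$ is close to $f_0=\inf_B f$. This needs $\inf_{B^o}f=\inf_B f$, for example $B=\overline{B^o}$ as assumed in Theorem \ref{t:record}. The displayed hypotheses $\lambda_d(B)>0$ and $\lambda_d(\partial B)=0$ do not provide it. Take $B=Q\cup\{p\}\subset A^o$ with $Q$ a cube and $f(p)<\min_Q f$. Then $k$-$\dist(p,\cX_n)^d=o(\log n/n)$ a.s., so the limit in \eqref{0315a} is $f(p)/\min_Q f<1$.

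\textbf{Constants in the lower bound.} If you only have $f\le f_0+\eps$ near the chosen point, the hole probability is about $n^{-(1-\eps)(1+\eps/f_0)}$. For $f_0\le 1$ this is below $n^{-(1-\eps/2)}$, so take $f\le f_0+\eps f_0/4$ there instead. Also, to bound the probability of no hole by $\exp(-n^{\eps/4})$ you need $m_n\ge n^{1-\eps/4}$, not just $m_n\ge n^{c'}$ for some $c'>0$. This is available, since you can place order $n/\log n$ disjoint balls.
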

The $C^{1,1}$  condition for \eqref{0315c} above is slightly weaker
than the $C^2$ condition imposed in the statement of \cite[Proposition 4.9]{MPCov}, but the proof there still works if we assume only that $\partial A \in C^{1,1}$.

We shall also  use the following lemma, which
is a special case of \cite[Lemma 6.1]{MPCov}.
It says
that if an array of random variables $U_{n,k}$ is
monotone in $n$ and $k$, and $n U_{n,\lfloor \eps \log n \rfloor}/\log n$,
is bounded above in probability for some fixed $\eps >0$,
one may be able to prove the same bound asymptotically almost surely
for $n U_{n,k}/\log n$ for any fixed $k$.
\begin{lemm}[Subsequence trick]
\label{lemtrick}
Suppose $(U_{n,k},(n,k) \in \N \times \N)$ is an array of random
variables on a common probability space such that $U_{n,k} $ is
nonincreasing in $n$ and nondecreasing in $k$, that is,
$U_{n+1,k} \leq U_{n,k} \leq U_{n,k+1} $ almost surely,
for all $(n,k) \in \N \times \N$. 

	Let $c > 0$,  $\eps >0$. Suppose
$
\Pr[n U_{n, \lfloor  \eps \log n \rfloor } > \log n] \leq
c n^{-\eps},
$
        for all but finitely many $n$.
	Let $k \in \mathbb N$.
Then with probability 1, $ \limsup_{n \to \infty} n U_{n,k} /\log n \leq 1$.

\end{lemm}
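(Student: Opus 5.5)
The plan is a Borel--Cantelli argument along a sparse but polynomially growing subsequence, followed by interpolation between consecutive terms of that subsequence. Monotonicity in $k$ lets us replace the fixed index $k$ by the growing index $\lfloor \eps \log n \rfloor$, and monotonicity in $n$ controls the times between subsequence points.

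\emph{Step 1: Borel--Cantelli along a subsequence.} Set $n_j := \lceil j^{2/\eps} \rceil$ for $j \in \N$. Then $n_j^{-\eps} \leq j^{-2}$, so by hypothesis
$$
\sum_j \Pr\big[n_j U_{n_j,\lfloor \eps \log n_j \rfloor} > \log n_j\big] \leq \sum_j c\, j^{-2} + O(1) < \infty .
$$
By the Borel--Cantelli lemma, almost surely for all large enough $j$ we have
$$
U_{n_j,\lfloor \eps\log n_j\rfloor} \leq \frac{\log n_j}{n_j}.
$$

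\emph{Step 2: interpolation.} Fix $k$. For $j$ large enough that $\lfloor \eps \log n_j \rfloor \geq k$, and for any $n$ with $n_j \leq n < n_{j+1}$, monotonicity in both indices gives
$$
U_{n,k} \leq U_{n_j,k} \leq U_{n_j,\lfloor \eps \log n_j\rfloor} \leq \frac{\log n_j}{n_j}.
$$
Hence
$$
\frac{n U_{n,k}}{\log n} \leq \frac{n_{j+1}}{n_j}\cdot\frac{\log n_j}{\log n} \leq \frac{n_{j+1}}{n_j}.
$$

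\emph{Step 3: conclusion.} Since $n_j$ grows polynomially in $j$, we have $n_{j+1}/n_j \to 1$ as $j \to \infty$. Therefore $\limsup_{n \to \infty} nU_{n,k}/\log n \leq 1$ almost surely.

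The only delicate point is the choice of subsequence. It must be sparse enough that $\sum_j n_j^{-\eps} < \infty$, yet dense enough that $n_{j+1}/n_j \to 1$. A polynomial $j^{2/\eps}$ meets both requirements, whereas geometric growth would lose a constant factor in Step 3. The rest of the argument is routine.
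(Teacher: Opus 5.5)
Your proof is correct and matches the intended argument. The paper gives no proof of its own here and instead cites Lemma 6.1 of its earlier coverage paper; that lemma rests on the same two steps: Borel--Cantelli along a polynomially growing subsequence (there $n^K$ with $K\eps>1$, here your $\lceil j^{2/\eps}\rceil$), then interpolation between consecutive subsequence terms using monotonicity in both indices and the fact that consecutive ratios tend to $1$.
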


The following asymptotic upper bound on coverage thresholds comes from
taking $\beta =0$ in
\cite[Lemma 6.3]{MPCov}.

\begin{lemm}[General upper bound]
\label{lemmeta}
Suppose $r_0, a, b \in (0,\infty)$,  and
 a family of sets  $A_r \subset A,$ defined for $0 < r < r_0$,
        are
         such that  for all $r \in (0,r_0)$, $x \in A_r$ and
 $s \in (0,r)$ we have $\mu(B(x,s)) \geq a s^d$, and
moreover $\kappa(A_r,r) = O(r^{-b})$ as $r \downarrow 0$.


	Let $\alpha > b/(ad)
	$ and set $r_n = (\alpha (\log n)/n)^{1/d}$.
 Then there exists $\delta >0$ such that
as $n \to \infty$, we have
$
\Pr[\{A_{r_n} \subset F_{n,\lfloor \delta \log n \rfloor,r_n} \}^c ]
 = O(n^{-\delta})
$
\end{lemm}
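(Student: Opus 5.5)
The plan is a discretisation and union bound: cover $A_{r_n}$ by polynomially many small balls centred in $A_{r_n}$, show that each slightly shrunk ball of radius about $r_n$ around these centres contains at least $\lfloor \delta \log n\rfloor$ sample points except with probability $n^{-(b/d)-\delta'}$, and sum over the centres.

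\emph{Choice of constants.} Since $\alpha a d > b$, fix $\eps \in (0,1/4)$ and $\eta \in (0,1)$ small enough that $a\alpha(1-2\eps)^d(1-\eta) > (b/d) + 2\eta$. Then choose $\delta \in (0,\eta)$ small, to be fixed in the Chernoff step.

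\emph{Discretisation.} Write $r=r_n$. Each ball of radius $r$ can be covered by at most $C_\eps$ balls of radius $\eps r$, where $C_\eps$ is a constant depending only on $\eps$ and $d$. Applying this to an optimal cover of $A_r$ by balls of radius $r$, we get $A_r$ covered by $m = O(r^{-b})$ balls $B(x_i,\eps r)$. We discard those not meeting $A_r$ and, for each remaining one, pick $y_i \in A_r \cap B(x_i,\eps r)$. Then $A_r \subset \cup_i B(y_i, 2\eps r)$.

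Now take any $x\in A_r$ and choose $i$ with $\|x-y_i\|\le 2\eps r$. Then $B(x,r) \supset B(y_i,(1-2\eps)r)$. Hence it suffices to show that every $B(y_i,(1-2\eps)r_n)$ contains at least $\lfloor \delta \log n\rfloor$ points of $\cX_n$. By hypothesis, applied with $s=(1-2\eps)r<r$ and $y_i \in A_r$,
\[
p_i := \mu(B(y_i,(1-2\eps)r_n)) \geq a(1-2\eps)^d \alpha (\log n)/n .
\]

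\emph{Chernoff step (the main point).} The count $\cX_n(B(y_i,(1-2\eps)r_n))$ is Binomial$(n,p_i)$ with mean at least $a(1-2\eps)^d\alpha \log n$. I would use the standard lower-tail bound
\[
\Pr[{\rm Bin}(n,p) \leq \beta np] \leq \exp\bigl(-np\, H(\beta)\bigr), \qquad H(\beta)=1-\beta+\beta\log\beta,
\]
which tends to $1$ as $\beta \downarrow 0$. Since $np_i \geq a(1-2\eps)^d\alpha \log n$, taking $\beta=\delta/(a(1-2\eps)^d\alpha)$ gives $\beta np_i\ge\delta\log n$ and makes $\beta$ arbitrarily small as $\delta$ is. Choosing $\delta$ small enough that $H(\beta) \geq 1-\eta$, each failure probability is at most $n^{-a\alpha(1-2\eps)^d(1-\eta)} \leq n^{-(b/d)-2\eta}$.

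\emph{Union bound.} We have $m = O(r_n^{-b}) = O((n/\log n)^{b/d})$. The union over $i$ is therefore $O(n^{-2\eta}) = O(n^{-\delta})$, since $\delta<\eta$. This yields $\Pr[\{A_{r_n}\subset F_{n,\lfloor\delta\log n\rfloor,r_n}\}^c]=O(n^{-\delta})$.

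The only delicate point is the ordering of constants: $\eps$ and $\eta$ come first from the strict inequality $\alpha > b/(ad)$, and $\delta$ last. Once that is arranged, the argument is routine. The relocation of centres into $A_r$ matters because the measure lower bound is only assumed at points of $A_r$.
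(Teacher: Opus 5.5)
Your proof is correct. The constants are ordered properly ($\eps,\eta$ fixed before $\delta$), and you move the $O(r_n^{-b})$ centres into $A_{r_n}$ so the hypothesis applies with $s=(1-2\eps)r_n$; the bound $\exp(-np_iH(\beta))$, uniform in $i$, then gives $O(n^{-2\eta})=O(n^{-\delta})$ after the union bound, for $n$ large enough that $r_n<r_0$. The paper does not prove this lemma itself but quotes it from \cite[Lemma 6.3]{MPCov}, and your discretisation--Chernoff--union-bound argument is the standard route for bounds of this kind.
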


We can combine the previous two lemmas to obtain the following.
\begin{lemm}
	\label{l:Sn}
Suppose $r_0, a, b \in (0,\infty)$,  and
 a family of sets  $A_r \subset A,$ defined for $r >0$,
        are
         such that  for all $r \in (0,r_0)$, $x \in A_r$ and
 $s \in (0,r)$ we have $\mu(B(x,s)) \geq a s^d$, and
moreover $\kappa(A_r,r) = O(r^{-b})$ as $r \downarrow 0$.

	For $n,k \in \N$ let $S_{n,k}:= \inf\{r >0: A_r \subset F_{n,k,r}\}.$
	Then
	$\limsup_{n \to \infty} 
	nS_{n,k}^d/\log n \leq b/(ad)$,
	almost surely,
	for any fixed $k \in \N$. 
\end{lemm}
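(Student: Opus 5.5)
The plan is to combine Lemma \ref{lemmeta} with Lemma \ref{lemtrick}, applying the latter to the array $U_{n,k} := S_{n,k}^d$. Fix $\alpha > b/(ad)$ and set $r_n := (\alpha (\log n)/n)^{1/d}$.

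The first step is to check the monotonicity hypotheses of Lemma \ref{lemtrick}. The sets $F_{n,k,r}$ are nondecreasing in $n$ and in $r$, since adding points or enlarging balls only increases coverage counts. They are nonincreasing in $k$. Hence $S_{n,k}$ is nonincreasing in $n$ and nondecreasing in $k$, and the same holds for $U_{n,k}$.

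One subtlety is that $A_r$ depends on $r$, so the event $\{S_{n,k} \leq r\}$ is not obviously monotone in $r$. To avoid this, I would argue directly: on the event $\{A_{r_n} \subset F_{n,\lfloor \delta \log n\rfloor, r_n}\}$, the number $r_n$ belongs to the set whose infimum defines $S_{n,\lfloor \delta \log n\rfloor}$. Therefore $S_{n,\lfloor \delta\log n\rfloor} \leq r_n$, that is, $n U_{n,\lfloor \delta \log n\rfloor} \leq \alpha \log n$. Lemma \ref{lemmeta} then gives, for some $\delta>0$,
\begin{align*}
\Pr[ n U_{n,\lfloor \delta \log n \rfloor} > \alpha \log n ] = O(n^{-\delta}).
\end{align*}

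The remaining step is to rescale so that the threshold matches the normalisation in Lemma \ref{lemtrick}. Put $U'_{n,k} := U_{n,k}/\alpha$. This array has the same monotonicity as $U_{n,k}$, and the bound becomes $\Pr[n U'_{n,\lfloor \delta \log n\rfloor} > \log n] \leq c n^{-\delta}$ for large $n$ and some constant $c$. Lemma \ref{lemtrick}, applied with $\eps = \delta$, yields
\begin{align*}
\limsup_{n\to\infty} n S_{n,k}^d/\log n \leq \alpha \quad \mbox{almost surely},
\end{align*}
for each fixed $k$.

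Finally, take $\alpha$ along a sequence decreasing to $b/(ad)$ and intersect the countably many almost sure events. This gives the claimed bound $\limsup_{n\to\infty} nS_{n,k}^d/\log n \leq b/(ad)$ almost surely.

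The only delicate points are the monotonicity check and the observation that no monotonicity of the event $\{S_{n,k} \leq r\}$ in $r$ is needed, because we only use that $r_n$ itself lies in the defining set. Everything else is bookkeeping.
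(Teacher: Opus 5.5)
Your proposal is correct and follows essentially the same route as the paper. The shared steps are: derive the monotonicity of $S_{n,k}$ in $n$ and $k$ from that of $F_{n,k,r}$; use Lemma \ref{lemmeta} to get $\Pr[S_{n,\lfloor \delta \log n\rfloor} > r_n] = O(n^{-\delta})$; apply Lemma \ref{lemtrick}; and let $\alpha \downarrow b/(ad)$. Your explicit rescaling $U'_{n,k} = S_{n,k}^d/\alpha$, and your stating the bound for $S_{n,\lfloor \delta\log n\rfloor}$ rather than for $S_{n,k}$ (which is what the paper's display shows), simply make precise what the paper leaves implicit.
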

\begin{proof}
	Observe that 
	$F_{n,k,r} \subset F_{n+1,k,r}$
	and
	$F_{n,k,r} \supset F_{n,k+1,r}$
	for each $(n,k,r)$, so that
	 $S_{n,k}$ is nonincreasing in $n$
	and nondecreasing in $k$ (regardless of whether $A_r$ is monotone
	in $r$).

	Let $\alpha > b/(ad)$ and set $r_n := ( \alpha (\log n)/n)^{1/d}$.
	By Lemma \ref{lemmeta} there exists $\delta > 0$
	such that for $n \geq e^{k/\delta}$,
	$$
	\Pr[n S_{n,k}^d > \alpha \log n] = \Pr[S_{n,k} > r_n]
	\leq \Pr[\{ A_{r_n} \subset F_{n, \lfloor \delta \log n \rfloor,r_n}
	\}^c] 
	= O(n^{-\delta}),
	$$
	and hence by Lemma \ref{lemtrick}, 
	almost surely $\limsup_{n \to \infty} (n S_{n,k}^d/\log n) \leq \alpha$.
	Taking $\alpha \downarrow b/(ad)$ gives us the result.
\end{proof}

For $k,n \in \N$ and 
$D \subset A$ let $\tR_{n,k} (D) := \inf \{r > 0: D \cap A^{(r)}
\subset F_{n,k,r}\}$
(so $\tR_{n,k} = \tR_{n,k}(A)$).
The next lemma gives asymptotic upper bounds
for $R_{n,k}(D)$
and $\tR_{n,k}(D)$
for arbitrary compact $D \subset A$,
including the intermediate case with $D \neq A$ and $D$  not 
contained in $A^o$. We use the conventions $\inf(\emptyset) := +\infty$
and $1/+\infty :=0$.
\begin{lemm}
	\label{l:upper}
	Let $D \subset A$ be compact and non-empty.
	Set $f_0(D):= \inf_{x \in D} f(x)$ and set $f_1(D) :=
\inf_{x \in D \cap \partial A} f(x)$. 
		Assume $f_0(D) >0$.
	Let $k \in \N$. Then, almost surely,
	\begin{align}
		\limsup_{n\to \infty} \Big( n \theta \tR_{n,k}(D)^d/\log n
		\Big)  \leq 1/f_0(D), 
		\label{e:upper2}
	\end{align}
	and if $\partial A \in C^{1,1}$,
	\begin{align}
	\limsup_{n \to \infty}
		\Big( n \theta R_{n,k}(D)^d/\log n \Big)  
		\leq \max(1/f_0(D), (2-2/d)/f_1(D)). 
		\label{e:upper1}
	\end{align}
\end{lemm}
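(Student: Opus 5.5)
The plan is to apply Lemma \ref{l:Sn} twice: once for an interior piece of $D$, and (for \eqref{e:upper1}) once more for the part of $D$ near $\partial A$. Then the two bounds are combined by monotonicity. Fix $\eps>0$. Since $f$ is continuous on the compact set $A$ and $f_0(D)>0$, we can choose $\eta>0$ such that $f(y) \geq f_0(D)-\eps$ for all $y \in A$ with $\dist(y,D) \leq \eta$. Likewise $f(y)\geq f_1(D)-\eps$ for all $y\in A$ within distance $\eta$ of $D\cap\partial A$; when $D \cap \partial A = \emptyset$ we take $f_1(D)=+\infty$, and the boundary term is then simply absent.

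\textbf{Step 1: the maximal-spacing bound \eqref{e:upper2}.} Take $A_r := D \cap A^{(r)}$. For $x \in A_r$ and $s<r$ we have $B(x,s)\subset A$, so
$$
\mu(B(x,s)) \geq (f_0(D)-\eps)\theta s^d
$$
once $r<\eta$. Since $A_r\subset D\subset A$ is bounded, $\kappa(A_r,r)=O(r^{-d})$. Lemma \ref{l:Sn} with $a=(f_0(D)-\eps)\theta$ and $b=d$ gives
$$
\limsup_{n\to\infty} n S_{n,k}^d/\log n \leq 1/((f_0(D)-\eps)\theta) \quad \mbox{a.s.},
$$
and $S_{n,k}$ coincides with $\tR_{n,k}(D)$. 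Letting $\eps\downarrow 0$ gives \eqref{e:upper2}. No boundary regularity is needed here, because the balls lie inside $A$.

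\textbf{Step 2: the coverage bound \eqref{e:upper1}.} Split $D$ into an interior part and a boundary part. Take $K$ large and put $\delta''$-type thresholds in place via Lemma \ref{l:bdyballs}.

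\emph{Interior part.} Let $A_r := D\cap A^{(Kr)}$. For $x\in A_r$ and $s<r$ the ball $B(x,s)$ lies in $A$, so exactly as in Step 1 we get $a\approx f_0(D)\theta$, $b=d$, and the bound $1/f_0(D)$.

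\emph{Boundary part.} Let $A'_r := D\setminus A^{(Kr)}$. Lemma \ref{l:Covbdy}(ii) gives $\kappa(A'_r,r)=O(r^{1-d})$, so $b=d-1$. For $x\in A'_r$ and $s<r$, Lemma \ref{l:bdyballs}(iii) gives $\lambda_d(B(x,s)\cap A)\geq(\theta/2-\eps)s^d$ for small $r$. Moreover $x$ lies within $Kr$ of $\partial A$, and for $r$ small this places $x$ within $\eta$ of $D \cap \partial A$, so
$$
\mu(B(x,s))\geq (f_1(D)-\eps)(\theta/2-\eps)s^d.
$$
Lemma \ref{l:Sn} then gives the bound
$$
\frac{d-1}{d(f_1(D)-\eps)(\theta/2-\eps)} \to \frac{2-2/d}{\theta f_1(D)}.
$$

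\emph{Combining.} Since $R_{n,k}(D)\leq \max(S_{n,k},S'_{n,k})$, where these are the thresholds for the two pieces, taking $\eps\downarrow0$ yields \eqref{e:upper1}.

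\textbf{Main obstacle.} The hard part is the density claim on the boundary piece: a point $x\in D\setminus A^{(Kr)}$ lies near $\partial A$, but we must show it is near $D\cap\partial A$ specifically, not just near some other part of $\partial A$. I would handle this by compactness. If there were $x_j\in D$ with $\dist(x_j,\partial A)\to0$ but $\dist(x_j,D\cap\partial A)\geq\eta$, then a limit point $x$ would lie in $D \cap \partial A$, because $D$ is closed, which is a contradiction. So for all small $r$ every $x\in A'_r$ is within $\eta$ of $D\cap\partial A$. In the same way, if $D\cap\partial A=\emptyset$, then $A'_r$ is empty for small $r$. Taking $f_1(D)$ as the infimum over $D \cap \partial A$ of the continuous $f$, the balls $B(x,s)$ stay in the region where $f\geq f_1(D)-\eps$, after shrinking $\eta$ slightly.
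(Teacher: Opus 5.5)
\textbf{Verdict.} There is a genuine gap, in the combining step. Your per-piece estimates are fine. Step 1 is the paper's argument. For \eqref{e:upper1}, the choices of $a$ and $b$ for $D\cap A^{(Kr)}$ and $D\setminus A^{(Kr)}$ are correct. Your compactness argument that the shell lies near $D\cap\partial A$ is a detail the paper compresses into ``continuity of $f$''.

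\textbf{The gap.} The inequality $R_{n,k}(D)\le\max(S_{n,k},S'_{n,k})$ is false in general. The interior condition $D\cap A^{(Kr)}\subset F_{n,k,r}$ is upward closed in $r$. The shell condition $D\setminus A^{(Kr)}\subset F_{n,k,r}$ is not, because the shell grows with $r$. So a small $S'_{n,k}$ only says that a thin shell is covered at some small radius $r'$. At a radius $r>S_{n,k}$, the layer $D\cap(A^{(Kr')}\setminus A^{(Kr)})$ is controlled by neither piece.

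A concrete counterexample: take $d=1$, $A=D=[0,L]$, $k=1$, $K=2$. Place sample points at spacing $2\eps_0\ll\rho$ throughout $[0,L]$, except for an empty gap $(\rho/2,5\rho/2)$ whose endpoints are sample points.
\begin{itemize}
\item $R_{n,1}(D)=\rho$, attained at $3\rho/2$.
\item The shell condition $[0,2r)\cup(L-2r,L]\subset F_{n,1,r}$ holds for every $r\in[\eps_0,\rho/2]$, so $S'\le\eps_0$.
\item The interior condition $[2r,L-2r]\subset F_{n,1,r}$ fails for $r\le 3\rho/4$, because $3\rho/2$ is included. For $r\in(3\rho/4,5\rho/6)$ it fails at $x=2r$. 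It holds for $r\ge5\rho/6$, so $S=5\rho/6$.
\end{itemize}
Thus $\max(S,S')=5\rho/6<R_{n,1}(D)$. The two a.s.\ limsup bounds you obtain from Lemma \ref{l:Sn} are correct, but they do not imply \eqref{e:upper1}.

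\textbf{The repair.} Lemma \ref{l:Sn} is just Lemma \ref{lemmeta} followed by Lemma \ref{lemtrick}, so merge the two pieces at a common radius before applying the subsequence trick.
\begin{enumerate}
\item Given $\beta>\max(1/(\theta f_0(D)),(2-2/d)/(\theta f_1(D)))$, set $r_n=(\beta\log n/n)^{1/d}$.
\item Apply Lemma \ref{lemmeta} to $D\cap A^{(r)}$ and to $D\setminus A^{(r)}$ with your values of $a$ and $b$ (with $\eps$ small enough), taking the smaller of the two resulting $\delta$'s.
\item With the same $r_n$ these two sets exhaust $D$, so the union bound gives $\Pr[R_{n,\lfloor\delta\log n\rfloor}(D)>r_n]=O(n^{-\delta})$.
\item Apply Lemma \ref{lemtrick} directly to the array $R_{n,k}(D)$, which is nonincreasing in $n$ and nondecreasing in $k$. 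This gives $\limsup_n nR_{n,k}(D)^d/\log n\le\beta$ a.s.
\end{enumerate}
This is exactly the paper's proof, and it needs no large $K$.
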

	\begin{proof}
		Set $f'_0:= f_0(D)$ and $f'_1:= f_1(D)$.
		Let $\eps' \in (0,f'_0)$. For all small enough $r>0$, all
		$x \in D \cap A^{(r)}$
		and $s \in (0,r)$ we have
		$\mu(B(x,s)) \geq (f'_0- \eps') \theta s^d$. By
		 Lemma \ref{l:Sn} with  $A_r = D \cap A^{(r)}$,
		 $a = (f'_0 - \eps')\theta$ and $b=d$, we obtain that
		 almost surely
		 $$
		 \limsup_{n \to \infty} n \tR_{n,k}(D)^d/\log n 
		 \leq 1/(\theta (f'_0 - \eps')),
		 $$
		 and taking $\eps' \downarrow 0$ yields \eqref{e:upper2}.

		Now assume $\partial A \in C^{1,1}$.
		Also assume for now that $D \cap \partial A \neq \emptyset$
		so $f'_1 < \infty$.
		Let $\beta > \max(1/(f'_0 \theta),
		(2-2/d)/(f'_1 \theta))$.
		Let	$\eps \in (0,f'_0)$
		with $1/((f_0'-  \eps)\theta) < \beta$
		and $(2-2/d) /((f'_1 -  \eps)\theta) < \beta$.
		Let $r_n := (\beta (\log n)/n)^{1/d}$.

		By Lemma \ref{l:Covbdy},
	$\kappa(D \setminus A^{(r)},r) = O(r^{1-d})$
	as $r \downarrow 0$. Also by Lemma
		\ref{l:bdyballs}(iii)
	and the continuity of $f$,
		for all small enough $r >0$, 
		all $x \in D \setminus A^{(r)}$ and all
		$s \in (0,r)$,
		$\mu(B(x,s))
	> \frac12 (f'_1-  \eps) \theta s^d$.  
	By applying Lemma \ref{lemmeta}
	taking $A_r = D \setminus A^{(r)}$ with $a = (f'_1- \eps) \theta/2$
		and $b = d-1$ so that $b/(ad) < \beta$,
		we obtain for some $\delta >0$ that 
		as $n \to \infty$,
	\begin{align}
	\Pr[\{D \setminus A^{(r_n)} \subset F_{n,\lfloor \delta \log n \rfloor, r_n} \}^c ]  = O(n^{-\delta}).
		\label{e:D1}
	\end{align}
	For all small enough $r >0$,  all
		$x \in D \cap A^{(r)} $ and all $s \in (0,r)$
		we have $\mu(B(x,s)) \geq (f'_0- \eps) \theta s^d$,
	and $\kappa (D \cap A^{(r)},r) = O(r^d)$ as $r \downarrow 0$.
	By applying Lemma \ref{lemmeta} to $A_r = D \cap A^{(r)}$
		with $a = (f'_0 - \eps) \theta$ and $b=d$
		(so $b/(ad)  < \beta$), we can find $\delta' \in (0,\delta]$
	such that
	\begin{align}
	\Pr[\{D \cap A^{(r_n)} \subset F_{n,\lfloor \delta' \log n \rfloor, r_n} \}^c ]  = O(n^{-\delta'}).
		\label{e:D2}
\end{align}
	Set $R_{n,k}(D) := \inf\{r: D \subset F_{n,k,r}\}.$ 
	By \eqref{e:D1}, \eqref{e:D2}
		and the union bound,
	\begin{align*}
		\mathbb P[n R_{n,\lfloor \delta' \log n \rfloor}(D)^d /\log n 
		> \beta]
		& = \mathbb P[ R_{n,\lfloor \delta' \log n \rfloor}(D) > r_n ]
		\\
		& = \mathbb P[\{D \subset F_{n,\lfloor \delta' \log n
		\rfloor,r_n}\}^c ] = O(n^{-\delta'}),
	\end{align*}
	so by Lemma \ref{lemtrick}, almost surely 
		we have $ \limsup_{n \to \infty}
	n  R_{n,k}(D)^d/\log n \leq \beta$.
		Since $\beta $ is arbitrary subject to
		 $\beta > \max(1/(f'_0 \theta), (2-2/d)/(f'_1 \theta))$,
		 this gives us \eqref{e:upper1} when $D \cap \partial A 
		 \neq \emptyset$.
		 
		 If $D \cap \partial A = \emptyset$, we can still obtain
		 \eqref{e:upper1} by repeating the preceding argument,
		 now taking $\beta, \varepsilon$ just with
		  $\beta > 1/((f'_0 -\eps)\theta)$. In this case \eqref{e:D1}
		 is automatic because $D \setminus A^{(r_n)} = \emptyset$
		 for $n$ large.
\end{proof}

Next we give a further preliminary 
result providing a uniform exponential moment bound
on the random variables $nR_{n,k}^d/\log n$ and $n \tR_{n,k}^d/\log n$. 
We shall use this to check the uniform integrability condition of Lemma
\ref{l:genrec}
in our situations.

\begin{lemm}
\label{meanlem}
	For $n \in \N$, set $r_n:= ((\log n)/ n )^{1/d}   $.
	Let $k \in \mathbb N$. 

	(i) Suppose  either (a) that
	$B \subset A^o$  or (b) that 
	$B=A$ and either $\partial A \in C^{1,1}$ or $A$ is convex (or both). 
	There exists a constant $\delta_2 = \delta_2(d,A) >0$ such that
	\begin{align}
	\limsup_{n \to \infty} \E[ \exp(\delta_2  (R_{n,k}/r_n)^d)] < \infty.
\label{eqmeanlem}
	\end{align}
	(ii) Suppose that  $\partial A \in C^{1,1}$ or $A$ is convex.
	Then there exists $\delta_3 = \delta_3(d,A) > 0$ such that 
	\begin{align}
	\limsup_{n \to \infty} \E[ \exp(\delta_3  (\tR_{n,k}/r_n)^d)] < \infty.
\label{eqmeanlem2}
	\end{align}
\end{lemm}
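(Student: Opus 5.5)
We will prove both parts by a tail bound of the form
\begin{align*}
\Pr[(R_{n,k}/r_n)^d > t] \leq c_1 n^{-c_2 t} \cdot n^{c_3} \quad \mbox{for all } t \geq t_0,
\end{align*}
uniformly in large $n$ (plus a trivial bound for huge $t$). We then integrate against $e^{\delta t}$. The exponent $(\log n)$ factors make $n^{-c_2 t} = \exp(-c_2 t \log n)$. This is much stronger than $e^{-c t}$ once $\log n \geq 1$, so the exponential moment is bounded for any $\delta_2 < c_2$.

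For the tail bound, fix $t$ and set $s = s_n(t) := (t (\log n)/n)^{1/d}$. Cover $A$ (which contains $B$ in case (a), and equals $B$ in case (b)) by $\kappa(A, s/2) = O(s^{-d})$ balls $B(x_j,s/2)$ with centres $x_j$ in $A$ (enlarging radii slightly if needed).

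If $R_{n,k} > s$, then some $y \in B$ has $\cX_n(B(y,s)) < k$. Taking $x_j$ with $\|y-x_j\| \leq s/2$, we get $B(x_j,s/2) \subset B(y,s)$, so $\cX_n(B(x_j,s/2)) < k$. By Lemma \ref{l:conecond}, in case (b) (and trivially in case (a) with $A$ replaced by a neighbourhood of $B$, since there $B(x,s/2)\subset A$ for small $s$), we have $\mu(B(x_j,s/2)) \geq f_* \delta_1 (s/2)^d =: p$ with $f_* := \inf_A f$. Here in case (a) we should use $\inf_B f$ and cover only $B$, or more precisely the $s$-neighbourhood of $B$, which lies in $A^o$ for small $s$. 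If $f$ can vanish on $\partial A$, we restrict attention to centres near $B$.

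The binomial lower tail gives $\Pr[\mathrm{Bin}(n,p) < k] \leq (np)^{k-1} e^{-np}\cdot C$ for $np\ge1$, which is at most $C' e^{-np/2}$. Here $np = c\, t \log n$ with $c = f_* \delta_1 2^{-d}$ (or $f_0\theta 2^{-d}$ in case (a)). The union bound over $O(s^{-d}) = O(n/(t\log n)) \le O(n)$ balls then yields $\Pr[(R_{n,k}/r_n)^d > t] \leq C n^{1 - ct/2}$.

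This bound is valid only when $s < \tau_1(A)$, i.e. $t < \tau_1^d n/\log n$. For larger $t$, we use that $R_{n,k} \leq \diam(A)$ whenever $n \geq k$. Hence $(R_{n,k}/r_n)^d \leq \diam(A)^d n/\log n$, and the event is empty once $t$ exceeds this. The gap $t \in [\tau_1^d n/\log n, \diam(A)^d n/\log n]$ is handled by monotonicity: the event $\{R_{n,k} > \tau_1/2\}$ has probability at most $O(1)\,\Pr[\mathrm{Bin}(n,p_0)<k] \le e^{-c'n}$ for a fixed $p_0>0$. Multiplying by $e^{\delta_2 \diam(A)^d n/\log n}$ still tends to zero, since $n/\log n = o(n)$.

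Then
\begin{align*}
\E[e^{\delta_2 (R_{n,k}/r_n)^d}] \leq e^{\delta_2 t_0} + \int_{t_0}^{\infty} \delta_2 e^{\delta_2 t} \Pr[(R_{n,k}/r_n)^d>t]\,dt .
\end{align*}
We choose $t_0 = 4/c$ and $\delta_2 < c/4$. Then for $t\ge t_0$ we have $n^{1-ct/2} \le n^{-ct/4} = e^{-(ct/4)\log n}$, and the integral is bounded uniformly in $n \geq 3$.

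Part (ii) is identical. If $\tR_{n,k} > s$, there is a ball $B(y,s)\subset A$ with fewer than $k$ points, so some covering ball $B(x_j,s/2)$ with $x_j\in A$ is nearly inside it. More precisely, $B(x_j,s/2)\cap A \subset B(y,s)$ holds, and that suffices because $\mu$ lives on $A$. We then apply Lemma \ref{l:conecond} as before, and the crude bound $\tR_{n,k} \le \diam(A)$ handles large $t$.

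The main obstacle is the bookkeeping for the large-$t$ regime, where $s$ exceeds the range of Lemma \ref{l:conecond}, together with ensuring that the density lower bound used is positive on the relevant region. In case (a) we use only $f_0>0$ on a neighbourhood of $B$ via continuity; in cases (b) and (ii), positivity of $f$ on the compact $A$ is needed. The latter follows if $f_0(A)>0$, as assumed when $B=A$. For (ii) with $B \subset A^o$, where $f$ may vanish on $\partial A$, we would either assume $\inf_A f>0$ or note that the paper's standing assumption covers this.
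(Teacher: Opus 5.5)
Your proposal is correct and follows essentially the same route as the paper. Both arguments use a union bound over a covering at the relevant scale to get a tail bound of the form $n^{\mathrm{const}-ct}$ while $R_{n,k}$ is below a fixed small radius, and integrate this against the exponential. Both then add an $e^{-c'n}$ bound on the event that $R_{n,k}$ exceeds that radius, combined with $R_{n,k}\le\diam(A)$. The paper uses a fixed grid of cubes of side $r_n/(2d)$ with balls of radius $(t-\frac12)r_n$, where you use a $t$-dependent covering; for (ii) the paper simply notes $\tR_{n,k}\le R_{n,k}(A)$ and invokes case (b) of (i), which is what your direct argument amounts to, with the same implicit need for $\inf_A f>0$.
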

\begin{proof}
	(i) 
	Using the continuity of $f$ and (in case (b)) 
	Lemma \ref{l:conecond}, we can (and do) choose
	$\delta, \delta' \in ( 0, \frac12 \min(1, 1/f_0))$ such that 
	for all $x \in B $ and $r \in (0,\delta)$ we have
	$\lambda_d(B(x,r) \cap A) \geq 2 \delta' r^d$ and
	$\inf_{x \in B(x,r) \cap A} f(x) \geq f_0/2$.
	Then in both cases, for all $x \in B$ and $r \in (0,\delta)$,
	we have $\mu(B(x,r)) \geq \delta' f_0 r^d$.

	Given $n$, partition  $\R^d$ into cubes  
	of side $r_n/(2d)$. Let those cubes in the partition  having
	non-empty intersection with   
	$B$ be denoted
	$Q_{n,1},\ldots, Q_{n,m_n}$, and for each $i \in [m_n]$
	let $q_{n,i}$ denote the first point of $\overline{Q_{n,i}} 
	\cap B$ in the lexicographic ordering.

Let $t \geq 1$.
	Suppose $\X_n ( B(q_{n,j}, (t- \frac12) r_n)) \geq k$
for  each $j \in [m_n]$.
	Then for  each $x \in \cup_{j \in [m_n]} Q_{n,j}$
	we have
	for some $j \in [m_n]$ that
	$\|x-q_{n,j} \| \leq r_n/2$,
	so by the triangle inequality
	$B(q_{n,j},(t- \frac12) r_n) \subset B(x,t r_n)$ and hence
	$\X_n(B(x,t r_n)) \geq k$
	so that $R_{n,k} \leq t r_n$. Thus,
there exist constants $c >0$ and $n_0 \in \N$,
 independent of $t$, such that
for all $n \geq n_0$ and all $t$ with
 $t r_n < \delta$ and $t \geq 1$, 
	\begin{align*}
	\Pr[ R_{n,k} > t r_n] & \leq \Pr [ \cup_{j=1}^{m_n} 
	\{ \X_n ( B(q_{n,j},(t- \tfrac12) r_n )) < k  \} ]
\\
	& \leq c (n/\log n) \binom{n}{k-1}
	\big(1 -  \delta' f_0 (t - \tfrac12 )^d  r_n^d \big)^{n-k+1}
\\
	& \leq n^k \exp (- n \delta' f_0 (t- \tfrac12)^d r_n^d) 
\\
		& =  n^{k- \delta' f_0(t-1/2)^d} \leq n^{k- \delta' f_0(t/2)^d}.
		\end{align*}
	Hence, given $a \in ( 0,1]$, for large enough $n$ we have that
	\begin{align}
	\E[ \exp(a (R_{n,k}/r_n)^d ) {\bf 1}_{\{R_{n,k} < \delta \}} ]
		& \leq \int_0^\infty \Pr[ e^{a (R_{n,k}/r_n)^d} > s; R_{n,k} \leq \delta] ds
\nonumber \\
		& \leq e +  \int_e^\infty \Pr[  R_{n,k} > r_n( (\log s)/a)^{1/d}; R_{n,k} \leq \delta] ds
\nonumber \\
		& \leq e+  n^k \int_e^\infty \exp(   - \delta' 
		f_0 (\log n) (\log s) / (2^{d} a )) ds
\nonumber \\
		& \leq e+  n^k \int_e^\infty s^{ - \delta' f_0 (\log n) / (2^{d} a )}  ds
\nonumber \\
		& = e + \frac{n^k e^{1- \delta' f_0 (\log n)/(2^{d} a) } }{
			(\delta' f_0 \log n/(2^{d} a)) -1 } 
\nonumber \\
		& \leq e + n^{ k - \delta' f_0 /(2^{d} a)} \leq 3, 
\label{0308a}
	\end{align}
provided $a$ is chosen so $2^{d} a k  < \delta' f_0 $.

Also, by our assumptions on $f$ and $B$ we can cover $B$ by 
 a finite collection
of balls of radius at most $\delta/3$, denoted $B_1,\ldots,B_\ell$ say,
and find a constant $\delta'' \in (0,\frac12)$,
such that $\mu(B_i) \geq \delta''$ for each $i \in [\ell]$. Then 
\begin{align*}
\Pr[R_{n,k}  \geq \delta] \leq \Pr[ \cup_{i=1}^\ell
\{ \X_n ( B_i) < k \} ] & \leq  \ell \binom{n}{k-1} 
(1- \delta'')^{n-k+1} 
\\
& \leq   \ell 2^{k-1} n^{k-1} \exp(-n \delta'').
\end{align*}  
Since also $R_{n,k} \leq \diam(A)$, 
we have
	\begin{align*}
\E[ \exp(a (R_{n,k}/r_n)^d ) {\bf 1}_{\{R_{n,k} \geq \delta \}} ]
\leq 
		e^{a (\diam (A))^d (n / \log n) } \ell 2^{k-1} 
		n^{k-1} \exp(-n \delta''),
	\end{align*}
which tends to zero as $n \to \infty$.
Combined with (\ref{0308a}), this gives us part (i).

For (ii), observe that $\tR_{n,k} \leq R_{n,k}$
(for $B=A$) and therefore taking $\delta_3 = \delta_2$,
we obtain \eqref{eqmeanlem2} from the case $B=A$ of 
\eqref{eqmeanlem}.
\end{proof}

\section{Uniqueness of the furthest location from $\X_n$}
\label{s:uniqueness}
\allco

For $x \in \R^d$ and $n,k \in \N$ with $n \geq k$, define $k$-$\dist(x,\X_n)
: = \inf \{r \geq 0: \X_n(  B(x,r)) \geq k\}$,
the distance from the location $x$ to its $k$-nearest neighbour in $\X_n$
(we generally use `location' to denote a generic element of $\R^d$, and
`point' to denote a point of the point process $\X_n$).
Given $A$ and $B$, define the random sets
\begin{align}
	\cW_{n,k} &:= \cW_{n,k}(B) :=
	\{x \in B: k{\mbox -}\dist(x, \cX_n) = R_{n,k}\};
	\label{e:cWdef} \\
	\tcW_{n,k} &:= \{x \in A^{(\tR_{n,k})}:
	k{\mbox -}\dist(x, \cX_n) \geq \tR_{n,k}\}
	.
	\label{e:tcWdef}
	\end{align}
	By \eqref{e:defRnk}, $R_{n,k} = \inf\{r: k\mbox{-}\dist(
	x,\X_n) \leq r \: \forall x \in B\}
	= \sup\{k\mbox{-}\dist(x,\X_n):x \in B\}$.
Since $k$-$\dist(x,\cX_n)$ is continuous in $x$ and $B$ 
is compact, ${\cal W}_{n,k} \neq \emptyset$.
	Let $W_{n,k} $ be the first element of
	${\cal W}_{n,k}$ in the lexicographic ordering.

	By \eqref{eqmaxspac}, and the definition of $F_{n,k,r}$ in
	Section \ref{ss:notation},
	\begin{align}
		\tR_{n,k} =
	\sup\{r:  A^{(r)} \setminus F_{n,k,r} \neq \emptyset \}.
		\label{e:tRdef2}
	\end{align}
	By a subsequence argument there exists $x \in A^{(\tR_{n,k})}$
	with $k$-$\dist(x,\X_n) \geq \tR_{n,k}$, so $\tW_{n,k} \neq \emptyset$.
	Let $\tW_{n,k} $ be the first element of
	$\tcW_{n,k}$ in the lexicographic ordering,

As mentioned earlier, we would like the furthest location in $B$
from the sample $\X_n$ (or for general $k$, the
location in $B$ at the greatest $k$-distance from $\X_n$) to
be unique: in other words, we need $|\cW_{n,k}|=1$.  
As also 
mentioned, proving that this uniqueness holds almost surely seems to
be harder than one might expect, especially when $B =A$, 
and in fact we prove only that uniqueness holds almost surely
`for large enough $n$' rather than `for all $n$'.
This is sufficient for our purposes. 

When considering records for the maximal $k$-spacing,
we would similarly like the location in $A^{(R_{n,k})}$
at the greatest $k$-distance from $\X_n$ to be unique,
i.e. $|\tcW_{n,k}|=1$. We shall prove
likewise that this uniqueness holds almost surely for large enough $n$.
We now state our uniqueness results.
\begin{prop}
	[Uniqueness in the interior]
\label{lemWn}
Let $n,k \in \N$ with $ n \geq k$. Then, almost surely,
	$|\cW_{n,k} \cap B^o| \leq 1$ and 
	$|\tcW_{n,k} \cap (A^{(\tR_{n,k})})^o| \leq 1$.
\end{prop}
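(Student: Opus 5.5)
The plan is to show that any location in $\cW_{n,k} \cap B^o$ (or in $\tcW_{n,k} \cap (A^{(\tR_{n,k})})^o$) must be the circumcentre of a small subset of $\X_n$. Then, using that $\mu$ has a density, no two distinct such circumcentres can have the same circumradius, almost surely. Throughout write $g(x) := k$-$\dist(x,\X_n)$, and let $R$ denote $R_{n,k}$ or $\tR_{n,k}$ as appropriate.

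\emph{Step 1: the geometric characterization.} Let $x \in \cW_{n,k} \cap B^o$, so $g(x) = R$, and $R>0$ a.s. Let $S := \{y \in \X_n : \|y-x\| = R\}$; then $S \neq \emptyset$. I claim $x \in \mathrm{conv}(S)$. Suppose not. Then there is a unit vector $u$ with $\langle u, y-x\rangle < 0$ for all $y \in S$. For small $h>0$, set $x' := x+hu$; then $x' \in B$ because $x \in B^o$. Every point of $S$ is at distance $>R$ from $x'$. Points at distance $>R$ from $x$ remain at distance $>R+\eta$ from $x'$ for some $\eta>0$. Points at distance $<R$ from $x$ number at most $k-1$, and $h$ is small. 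Hence $\X_n(B(x',R+\eta')) \leq k-1$ for some $\eta'>0$, so $g(x') > R$. This contradicts the fact that $R = \sup_{x \in B} g(x)$.

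For $\tcW_{n,k}$ the same perturbation applies, with one extra observation. If $x'$ lies in the interior of $A^{(R)}$, then $B(x'+v,R)\subset A$ for all $|v|\le\rho$ and some $\rho>0$. The union of these balls is $B(x',R+\rho)$, so $x' \in A^{(R+\rho)}$. Consequently, if $g(x') > R$, then for $r$ slightly above $R$ we get $x' \in A^{(r)} \setminus F_{n,k,r}$, contradicting \eqref{e:tRdef2}. The same argument also rules out $g(x) > R$ for $x \in \tcW_{n,k} \cap (A^{(\tR_{n,k})})^o$. So again $g(x)=R$ and $x \in \mathrm{conv}(S)$.

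\emph{Step 2: reduction to circumcentres.} Almost surely no $d+2$ points of $\X_n$ lie on a common sphere, and all points are distinct. Hence $2 \le |S| \le d+1$; the case $|S|=1$ would force $x \in S$ and $R=0$. Since $x$ lies in the affine hull of $S$ and is equidistant from all points of $S$, $x$ is the circumcentre $c(S)$ of $S$ within its affine hull, and $R = \rho(S)$ is the corresponding circumradius. Thus each element of the two sets in question equals $c(S)$ for some $S \subset \X_n$ with $2 \le |S| \le d+1$ and $\rho(S)=R$. There are finitely many such $S$. It therefore suffices to show that, a.s., for any two distinct index sets $I, J \subset [n]$ of these sizes with $c(\X_I) \neq c(\X_J)$, we have $\rho(\X_I) \neq \rho(\X_J)$.

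\emph{Step 3: the probabilistic step.} Given $I \neq J$, assume without loss of generality that $I \setminus J \neq \emptyset$, and pick $i \in I \setminus J$. Condition on $\{X_j : j \neq i\}$. Then $\rho(\X_J)$ is a fixed number $r_0$. The map $y \mapsto \rho^2(\X_{I\setminus\{i\}} \cup \{y\})$ is a rational function of $y$, defined off a null set where the points are affinely dependent. I expect the main obstacle to be showing that this function is not identically equal to $r_0$, since a nonconstant rational function has a Lebesgue-null level set, and the density of $X_i$ then gives conditional probability zero. Nonconstancy holds because, letting $y$ tend to infinity in a direction transverse to the affine hull of the remaining points, the circumradius tends to infinity. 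When $|I|=2$, $\rho$ is half the distance between the two points, which is clearly nonconstant in $y$. Averaging over the conditioning and taking a union over the finitely many pairs $(I,J)$ gives the claim.

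Note that the case $c(\X_I)=c(\X_J)$ is harmless: it yields the same location. Hence $|\cW_{n,k}\cap B^o| \le 1$ and $|\tcW_{n,k}\cap (A^{(\tR_{n,k})})^o| \le 1$ almost surely.
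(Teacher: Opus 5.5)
Your argument is correct, but it takes a different route from the paper's: you extract less geometric information before the probabilistic step and compensate with a stronger probabilistic statement.

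\textbf{The paper's route.} The paper proves the sharper fact (Lemma~\ref{l:I>d}) that such a location $y$ is at distance exactly $R$ from at least $d+1$ sample points. The argument runs as follows.
\begin{itemize}
\item If $|I|\le d$, it moves $y$ along a line lying in the intersection of the tangent hyperplanes at $y$ to $|I|-1$ of the spheres $\partial B(X_i,R)$. It takes the half-line that avoids the remaining ball.
\item Moving tangentially to a sphere strictly increases the distance from its centre, so this uses second-order information.
\item On the general-position event $D_n$, this forces $y=Y_I$, the unique point of $P_I$, for some $(d+1)$-subset $I$. So only $(d+1)$-subsets need comparing.
\item The null-probability step is then purely geometric. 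Condition on all points except $X_j$ with $j\in J\setminus I$. Then $Y_J$ must be one of at most two points $u$ on the line $P_{J\setminus\{j\}}$, and $X_j$ must lie on the sphere of radius $\dist(Y_I,\cX_I)$ about $u$.
\end{itemize}

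\textbf{Your route.} Your Step~1 uses only the first-order separating-hyperplane condition $x\in\mathrm{conv}(S)$. This is easier to justify but weaker, because it admits saddle-type configurations such as the midpoint of two sample points. You therefore must allow $2\le|S|\le d+1$. You compensate by proving that all distinct index sets of these sizes have distinct circumradii, using that $\rho^2$ is a non-constant rational function of one vertex and that zero sets of nonzero polynomials are Lebesgue-null. This tacitly uses that, almost surely, every set of at most $d+1$ sample points is affinely independent, which is standard.

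\textbf{What each buys.} Both approaches are adequate for this proposition. The paper's sharper lemma pays off later, because it is reused in the boundary analysis:
\begin{itemize}
\item Lemma~\ref{l:I>=d} deduces $y\in\partial A$ from $|I|\le d-1$.
\item The proof of Proposition~\ref{p:uniquefurth} needs $R_{n,k}=\dist(Y_J,\cX_J)$ with $|J|=d+1$ for interior maximisers.
\end{itemize}
Your convex-hull condition would not deliver either of these.
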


The next result 
 says that there is a constant $\delta_4  >0$
 such that almost surely the location in
$A$ at greatest $k$-distance from the sample $\cX_n$ is unique, provided 
$R_{n,k} \leq \delta_4 $.

\begin{prop}[Main uniqueness result]
	\label{p:uniquefurth}
	Suppose $B=A$ and $\partial A \in C^{1,1}$.
	There exists a constant $\delta_4 = \delta_4(d,A) > 0 $ such that
	for all $k, n \in \N$
	with $k \leq n$, we have $\Pr[
		\{ |\cW_{n,k}| > 1 \} \cap \{R_{n,k} \leq \delta_4\}] = 0$.
\end{prop}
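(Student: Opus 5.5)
Since Proposition \ref{lemWn} already excludes two maximisers in $A^o$, the task is to handle maximisers on $\partial A$. I would argue by a local geometric characterisation of maximisers combined with a "generic position" fact. The generic position fact is that the $X_i$ have a density, so almost surely no nontrivial algebraic relation holds among finitely many of them. Concretely, I would exclude in turn configurations where a maximiser on $\partial A$ coexists with another maximiser, and show each such configuration forces a null event.

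\emph{Step 1: structure of a maximiser.} Let $w \in \cW_{n,k}$ and $r=R_{n,k}$. Then $\cX_n(B(w,r)^o)\le k-1$, and the sphere $\partial B(w,r)$ carries the $k$-th nearest points. If $w\in A^o$, a standard perturbation shows that the points $\cX_n\cap\partial B(w,r)$ (call them the supporting points) cannot lie in an open hemisphere; otherwise moving $w$ away from them increases the $k$-distance. If $w\in\partial A$, the analogous statement holds with the inward normal $\hat n_w$ added. The supporting points together with $-\hat n_w$ cannot lie in an open half-space, so $\hat n_w$ lies in the cone generated by $w-X_i$ over the supporting points. Using the sphere condition (Lemma on $\tau(A)$) and the smallness $r\le \delta_4$ (with $\delta_4$ small relative to $\tau(A)$), I would show $w$ is then a local maximiser of a function determined by at most $d$ supporting points and the boundary. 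Generically there are $j\le d$ supporting points, with $w$ determined as the point of $\partial A$ equidistant from them that is a critical point of the distance restricted to the $(d-j)$-dimensional set of equidistant boundary locations.

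\emph{Step 2: finitely many candidates and dimension count.} For each index set $J\subset[n]$ with $|J|\le d+1$, the candidate locations determined by $\cX_J$ form, for $r\le\delta_4$, a finite set of measurable functions of $\cX_J$. For interior candidates these are circumcentres; for boundary candidates they are critical points on $\partial A$. Finiteness uses the $C^{1,1}$ curvature bound and $r$ small, which make the equidistant set a graph that crosses $\partial A$ transversally. Two distinct maximisers $w\ne w'$ give $\|w-X_i\|=\|w'-X_{i'}\|$ for supporting points $X_i$ of $w$ and $X_{i'}$ of $w'$. If the supporting sets $J,J'$ differ, condition on $\cX_{J\cap J'}$ and on all of $\cX_{J'}$ except one point $X_m\notin J$. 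Equality of the two radii then forces $X_m$ onto a level set of a function that is non-constant in $X_m$. That level set is Lebesgue-null, so the event has probability zero. If $J=J'$, the two candidates are distinct critical points with the same supporting set. Here I would use the smallness of $r$ to show uniqueness, since the supporting points all lie within $2\delta_4$. A union bound over finitely many index sets completes the argument.

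\emph{Main obstacle.} The hard part is the boundary case in Step 1–2. There the candidate $w$ is only implicitly defined through the $C^{1,1}$ boundary, which is merely Lipschitz differentiable, so I cannot use smooth implicit-function arguments. I must still show the radius at the candidate genuinely varies with $X_m$, for example that it is strictly monotone along a suitable direction of motion of $X_m$. I would try a direct geometric argument. Move $X_m$ radially away from $w$: the $k$-distance at the old candidate strictly increases, while the competing candidate, whose supporting set excludes $m$, is unaffected. So for each fixed value of the other points, at most one position along each ray can give equality, and Fubini in polar coordinates yields measure zero.
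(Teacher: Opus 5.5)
Your outline has the same skeleton as the paper: reduce to maximisers on $\partial A$ via Proposition~\ref{lemWn}, write $R_{n,k}$ as one of countably many functions of small index sets, then rule out ties by freezing all but one point. However, the two steps that carry the weight are not established.

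First, your Step~2 asserts that boundary candidates form a \emph{finite} family of measurable functions of $\cX_J$, and this is not justified. The curvature bound and small $r$ can at best give transversality at the actual maximiser, not finiteness of $P_J\cap\partial A$. A line can meet a $C^{1,1}$ boundary in infinitely many points accumulating at a tangency. Your $j<d$ ``critical point'' candidates need not be isolated at all. The paper needs neither finiteness nor critical points. Lemma~\ref{l:I>=d} shows that $j<d$ cannot occur once $R_{n,k}\le\tau(A)/9$. A direction tangent to all the spheres is either strictly inward, or tangent to $\partial A$. In the latter case, in the plane spanned by that direction and $\hat n_y$, the inner tangent disk of radius $\tau(A)/2$ is not covered near $y$ by the smaller disks tangent to the same line at $y$. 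This tangential case is exactly what your first-order Step~1 misses. (Its cone condition also has the wrong sign; it should read $\hat n_w\in\mathrm{cone}\{X_i-w\}$.) Together with Lemma~\ref{l:dAvtx} this places $w$ on the line $P_J$ with $|J|=d$. Lemma~\ref{l:Rdot} then uses that $\dist(\cdot,\cX_J)$ increases along the half-line $P_J^\bullet$ through $w$. Any point of $P_J^\bullet\cap\partial A$ with distance in $(R_{n,k},\alpha]$, for rational $\alpha$ close to $R_{n,k}$, would lie near $w$ and be uncovered. Hence $R_{n,k}=R^\bullet_{J,\alpha}$, a supremum depending on $\cX_J$ alone, over countably many indices $(J,\alpha,\bullet)$.

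Second, the tie-breaking step fails as written. A non-constant function can have level sets of positive measure. Your radial fix does not work because the candidate itself (a circumcentre, or a point of $P_J\cap\partial A$) moves with $X_m$. The increase of the $k$-distance at the old location says nothing about the new candidate radius, and polar coordinates about an $X_m$-dependent centre do not support Fubini.

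In the mixed interior/boundary case the paper avoids this by always perturbing $X_j$ with $j$ in the $(d+1)$-set $J$ of the interior maximiser but outside the $d$-set $I$ of the boundary one. With the other points frozen, $R^\bullet_{I,\alpha}$ is fixed. The circumcentre must then be one of at most two points of the line $P_{J\setminus\{j\}}$, so $X_j$ lies on one of two spheres.

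For $J=J'$, ``the supporting points lie within $2\delta_4$'' is not an argument. Lemma~\ref{l:Hn} uses that the other point $q'$ of $P_J$ at distance $R_{n,k}$ lies in $\partial A\cap B(q,2R_{n,k})$, hence outside both open tangent balls of radius $\tau(A)/2$ at $q$. This forces every disk $B(X_i,R_{n,k})\cap\mathbb H_2$ through $q$ and $q'$ to be a slight rotation about $q$ of a disk tangent to $\partial A$ there, again leaving uncovered points of $A$ next to $q$.

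Finally, for two boundary maximisers with different $d$-sets, a point of a boundary configuration must be perturbed. The paper does this in one line via $\Pr[\dist(X_j,\partial A)=R^\bullet_{I,\alpha}]$ and Lemma~\ref{l:fromHPY}. But $R^{\bullet'}_{I',\alpha'}=c$ only yields $\|X_j-y'\|=c$ for some $y'\in\partial A$, not $\dist(X_j,\partial A)=c$. So you would still need a genuine argument there, e.g.\ that given all points but $X_j$, the admissible $y'$ are few enough to confine $X_j$ to a Lebesgue-null union of spheres.
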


We now start to prepare for proving Propositions \ref{lemWn}
and \ref{p:uniquefurth}.
The  next lemma is deterministic: there
is no `almost surely' qualification in its conclusion.
To save space we consider 
$\cW_{n,k}$ and $\tcW_{n,k}$ together in this lemma.

\begin{lemm}
	\label{l:I>d}
	Let $n,k \in \N$ with $n \geq k$. 
	Either (a) let $R= R_{n,k}$
	and let $y \in \cW_{n,k} \cap B^o$, 
	or (b) let $R= \tR_{n,k}$ and let 
	$y \in \tcW_{n,k} \cap (A^{(\tR_{n,k})})^o$.
	In either case
	define the index sets  
	\begin{align}
		I 
		:= \{i \in [n] : \|y -X_i\| = R\}; 
	~~~~~~~~
		I_0 
		:= \{i \in [n]: \|y -X_i\| < R\}.
		\label{e:defII0}
		\end{align}
		Then $|I| \geq d+1$.
%
\end{lemm}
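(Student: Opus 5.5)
We argue by contradiction: suppose $|I| \leq d$, and produce a location $y'$ near $y$, still in $B$ (case (a)) or in the relevant interior set (case (b)), whose $k$-distance from $\X_n$ exceeds $R$. This contradicts the maximality of $R$ in either definition.

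First record the structure at $y$. Since $k$-$\dist(y,\X_n) = R$, we have $|I_0| \leq k-1$ and $|I_0| + |I| \geq k$. Every $i \notin I \cup I_0$ satisfies $\|y - X_i\| > R$, and there are finitely many such $i$, so there is $\eta > 0$ with $\|y - X_i\| > R + \eta$ for all of them. Also $I \neq \emptyset$. In case (b) we should first check that $R > 0$: the ball $B(y,R)$ would otherwise be degenerate, and for $n \geq k$ the sup in \eqref{e:tRdef2} is positive since $A$ has nonempty interior. This sub-point needs a line but is routine.

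The key step uses $|I| \leq d$. The vectors $X_i - y$ for $i \in I$ number at most $d$. Since they all have the same norm $R > 0$, we can find a unit vector $u$ with $\langle u, X_i - y\rangle < 0$ for all $i \in I$. We build $u$ as follows. The points $X_i - y$, $i \in I$, are distinct points on the sphere of radius $R$, so none is a positive multiple of another. If $o$ is not in their convex hull, separation gives $u$ directly. If $o$ is in the convex hull, then $o = \sum_{i \in I} \lambda_i (X_i - y)$ with $\lambda_i \geq 0$ summing to one. The affine hull of at most $d$ points is then a proper linear subspace $L$ of dimension at most $|I|-1 \leq d-1$, since it contains $o$. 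Take $u$ orthogonal to $L$; this gives $\langle u, X_i - y\rangle = 0$ for all $i \in I$. Now move to $y' = y + t u$ for small $t > 0$. Then $\|y' - X_i\|^2 = R^2 + t^2 > R^2$ for $i \in I$, which is enough. So in all cases we get a direction $u$ with $\langle u, X_i - y \rangle \leq 0$ for all $i \in I$, and moving along it strictly increases each distance $\|y' - X_i\|$, $i \in I$, to first or second order. I expect this elementary geometric step, handling the degenerate configuration where $o$ lies in the convex hull, to be the main point needing care, but the orthogonal-complement argument settles it.

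Finally, choose $t > 0$ small enough for three things. First, $y' \in B^o$ in case (a), or $y' \in (A^{(R)})^o$ in case (b); in case (b) we moreover want $B(y',R') \subset A$ for some $R' > R$. Second, $\|y' - X_i\| > R + \eta/2$ for $i \notin I \cup I_0$. Third, the points with index in $I_0$ stay within distance less than $R$ of $y'$, though this does not matter. Then every $X_i$ with $i \in I$ satisfies $\|y'-X_i\| > R$. So the closed ball $B(y', R'')$ contains at most $|I_0| \leq k-1$ points for some $R'' > R$ close to $R$. In case (a) this gives $R_{n,k} \geq k$-$\dist(y',\X_n) > R$, a contradiction.

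In case (b), we use that $y$ lies in the open set $(A^{(R)})^o$. Hence for $t$ and $R'' - R$ small we have $B(y',R'') \subset A$, that is $y' \in A^{(R'')}$, while $y' \notin F_{n,k,r}$ for some $r \in (R, R'')$. This contradicts \eqref{e:tRdef2}. Therefore $|I| \geq d+1$.
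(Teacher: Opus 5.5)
Your proof is correct and follows essentially the paper's route: assume $|I|\le d$, find a unit direction $u$ with $\langle u, X_i-y\rangle\le 0$ for all $i\in I$, and push $y$ slightly along $u$ to get a location whose $k$-distance exceeds $R$ and which lies in $B$ (resp.\ in some $A^{(s)}$ with $s>R$). The only real difference is how $u$ is found, plus one small point in case (b). The paper takes $u$ orthogonal to $X_i-y$ for all but one $i\in I$ (a line in the intersection of tangent hyperplanes) and fixes its sign using the remaining index, whereas you use a separation/affine-hull dichotomy; your opening claim of a strict inequality for all $i$ can fail, e.g.\ for an antipodal pair, but you rightly fall back to $\le 0$. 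In case (b), membership in $\tcW_{n,k}$ only gives $k$-$\dist(y,\X_n)\ge R$, so your asserted equality would need the paper's extra argument; however, you only ever use $|I_0|\le k-1$, which already follows from that inequality.
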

\begin{proof}
	We prove the result by contradiction, so
	suppose $|I| \leq d$.

	We claim that $k$-$\dist(y,\X_n) =R$. In case (a), this
	is immediate from the definition \eqref{e:cWdef}. In case (b),
	by \eqref{e:tcWdef} we have
	$k$-$\dist(y,\X_n) \geq R = \tR_{n,k}$, and if the inequality is
	strict, then 
	 there exists $s > \tR_{n,k}$ such that
	$y \in A^{(s)}$ and
	$k$-$\dist(y,\X_n) > s$, contradicting \eqref{e:tRdef2}.

	By the preceding claim,
	$|I \cup I_0| \geq k$ and $|I_0| < k$ so $|I| \geq 1$.
	Choose $j \in I$.

	Let $H$ denote the intersection of the hyperplanes tangent to
	$B(X_i,R)$  at $y$, $ i \in I \setminus \{j\}$ (or if $|I|=1$,
	let $H := \mathbb R^d$). Then
	$H$ is an affine subspace of $\mathbb R^d$ of dimension at least 1.
	Pick a unit vector $e \in \mathbb R^d$ such that
	the line  $L:= \{y + ae: a \in \R\}$  is contained in $H$.
	Let $L^+:= \{y+ae: a > 0\}$ and $L^-:= \{y+a e: a < 0\}$.

	Then $y \in L \cap \partial B(X_j,R)$, so 
	at least one of $L^+$ and $L^-$ (say $L^+$) 
	has empty intersection with $B(X_j,R)$. Since
	$L$ is tangent to each of $B(X_i,R), i \in I \setminus \{j\}$,
	moreover $L^+$ has empty intersection with each of these balls too.

	Since $y \notin \cup_{i\in [n] \setminus I \setminus I_0}
	B(X_i,R)$ we can and do choose $\delta >0$ with 
	$B(y, \delta) \cap \cup_{i \in [n] \setminus I \setminus I_0} B(X_i,R) = \emptyset$, 
	and moreover  with
	$B(y, \delta ) \subset B$ (in  case (a)) 
	or with $B(y, \delta) \subset A^{(R + \delta)}$
	(in case (b)).
	Then
	 $k$-$\dist(y + \delta e, \X_n) > R$,
	and $y + \delta e \in B$ (in case (a)) or $y + \delta e
	\in A^{(R + \delta)}$ (in case (b)),
	contradicting the definition of $R$ in either case.
	This completes the proof.
\end{proof}
We use the following notation: for each $i,j \in [n]$ with $i \neq j$, 
define the hyperplane
$$
P_{\{i,j\}} = \{x \in \R^d: \|x-X_i \|= \|x-X_j\| \},
$$
and for each $I \subset [n]$ with $|I| \geq 2$, set
\begin{align}
P_I = \cap_{\{i,j\} \subset  I } P_{\{i,j\}},
	\label{e:defPI}
\end{align}
the set of all locations in $\R^d$ that are equidistant from each $X_i,i \in I$.

Let $D_n$ denote the event that for each $I \subset [n]$ with $ 2 \leq |I| 
\leq d+1$,
the set $P_I$ is an affine subspace of $\R^d$ of dimension $d-|I|+1$.
Then $\Pr[D_n] = 1$. For example, if $I= [j]$ with $2 \leq j \leq d+1$,
then $P_I = P_{\{1,2\}} \cap P_{\{2,3\}} \cap \cdots \cap P_{\{j-1,j\}}$ which
is almost surely an affine space of dimension $d-(j-1)$.

If event $D_n$ occurs, then for each $I \subset [n]$ with $|I|= d+1$, the
set $P_I$ is an affine space of dimension zero, i.e. a single point. Denote
this point by $Y_I$.

\begin{proof}[Proof of Proposition \ref{lemWn}]
	We refer to the event that
	 $|\cW_{n,k} \cap  B^o| > 1 $
	 as `Case (a)' and the event that
	 $|\tcW_{n,k} \cap (A^{(\tR_{n,k})})^o| > 1 $
	 as `Case (b)'.
	In Case (a) we can pick distinct $y,z \in \cW_{n,k} \cap B^o$,
	while in Case (b) we can pick distinct $y,z \in \tcW_{n,k} \cap 
	(A^{(\tR_{n,k})})^o$.
	By Lemma \ref{l:I>d}, in either case there are index sets
	$I,J \subset [n]$, each with $d+1$ elements, such that 
	   $\|y-X_i\|, i \in I$ and $\|z- X_j\|, j \in J$ are all equal
	  to $R$, where we set $R = R_{n,k}$ in
	  Case (a) and $R = \tR_{n,k}$ in Case (b).
	  If also $D_n$ occurs, then $y = Y_I$ and $z = Y_{J}$, so
	  $I \neq J$. Hence
	  \begin{align}
		  ( \{|\cW_{n,k} \cap B^o | >1 \} 
		  &
		  \cup 
		  \{|\tcW_{n,k} \cap (A^{(\tR_{n,k})})^o | >1 \} ) 
	  \cap D_n 
		  \nonumber \\
		  &
		  \subset \cup_{I,J \subset [n]: |I|= |J|= d+1, I \neq J}
	  \{\dist(Y_J,\cX_J) = \dist(Y_I,\cX_I)\}.
		  \label{e:twoYs}
	  \end{align}
	  Let
	  $I,J \subset [n]$ with $ |I|= |J|= d+1, I \neq J$.
	  Let $j \in J \setminus I$.
	  For any  outcome of $(X_i: i \in [n] \setminus \{j\})$
	  that is consistent with event $D_n$, the set $P_{J \setminus \{j\}}$
	  is a line, and the set 
	  $$
	 V:=  \{x \in P_{J \setminus \{j\} }:  \dist(x, \cX_{J \setminus \{j\}})
	  = \dist (Y_I, \cX_I) \}
	  $$
	  has at most two elements.
	  Then, given the outcomes
	  of $(X_i,i \in [n] \setminus \{j\})$, the event 
	  $\{\dist(Y_J,\cX_J) = \dist (Y_I,\cX_I)\}$
	  occurs  only if 
	  $\|X_j -u \| = \dist(Y_I, \cX_I)$ for some $u \in V$,
	  an event of probability zero.
	  Then the result follows
	  from \eqref{e:twoYs}.
\end{proof}

\begin{lemm}[No location in $\partial A$ is equidistant  from $d+1$
	points of $\X_n$]
	\label{l:dAvtx}
	Let $n > d$ and
let $D'_n$ be the event that for each subset $I \subset [n]$
with $|I| = d+1$, the set $P_I \cap \partial A $ is empty.
	If $\lambda_d(\partial A) = 0$,
then $\mathbb P[D'_n] = 1$.
%
\end{lemm}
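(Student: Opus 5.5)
Since there are only finitely many subsets $I \subset [n]$ with $|I| = d+1$, it suffices by the union bound to fix one such $I$ and show that $\Pr[\{P_I \cap \partial A \neq \emptyset\} \cap D_n] = 0$; recall $\Pr[D_n]=1$. The strategy, as in the proof of Proposition \ref{lemWn}, is to condition on all but one of the points and exploit the absolute continuity of the remaining point's distribution.

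Fix $j \in I$ and set $J := I \setminus \{j\}$, so $|J| = d \geq 1$. Condition on $(X_i, i \in [n]\setminus\{j\})$, restricting to outcomes consistent with $D_n$. When $d \geq 2$, $P_J$ is then a line $L$ (an affine subspace of dimension $d-|J|+1 = 1$). When $d=1$, $J$ is a singleton and we interpret $P_J := \R$; the argument below still goes through, and in that case $\partial A$ is a finite set, so this case is trivial anyway. On $D_n$ we have $P_I = \{Y_I\}$ with $Y_I \in L$, and $Y_I$ is the unique point of $L$ equidistant from $X_j$ and from a fixed $X_{i_0}$, $i_0 \in J$ (equivalently from all of $\X_J$).

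The event $P_I \cap \partial A \neq \emptyset$ therefore says that there is some $u \in L \cap \partial A$ with $\|X_j - u\| = \|X_{i_0} - u\| =: \rho(u)$. In other words, $X_j$ lies in the set
$$
S := \bigcup_{u \in L \cap \partial A} \partial B(u, \rho(u)).
$$
The main obstacle is to show that $\lambda_d(S) = 0$ even though $L \cap \partial A$ may be uncountable. For this I would use the hypothesis $\lambda_d(\partial A)=0$ in the following way. The map $\Phi : \R^d \setminus \{X_{i_0}\} \to L$ sending $x$ to the unique point of $L$ equidistant from $x$ and $X_{i_0}$ (defined off a null set, namely where the bisector $P_{\{j,i_0\}}$ is parallel to $L$) is smooth. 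Moreover, $S \subset \Phi^{-1}(L \cap \partial A)$.

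Now parametrise $L$ by arclength, $a \mapsto u(a)$. Each level set $\Phi^{-1}(u(a))$ is the sphere $\partial B(u(a),\rho(u(a)))$. By the coarea formula (or, more concretely, by Fubini in the coordinates ``sphere parameter $a$ plus angular position''), $\lambda_d(\Phi^{-1}(E))$ is an integral over $a \in E$ of surface measures weighted by a bounded Jacobian factor. Hence $\lambda_d(\Phi^{-1}(E))=0$ whenever the one-dimensional measure $\lambda_1(E)=0$. So it suffices to have $\lambda_1(L \cap \partial A) = 0$ for almost every realisation of $L$.

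To obtain this, note that $L$ depends on the conditioned points, so we may first condition further on $(X_i, i \notin J \cup \{j\})$ and on all but one point $X_m$ of $J$. Then $L$ varies with $X_m$ through a family of parallel translates, or more generally a smooth family of lines sweeping out space. By Fubini, $\lambda_d(\partial A)=0$ implies that $\lambda_1(L \cap \partial A)=0$ for almost every such line. If this sweeping argument proves messy, I would instead first try a direct route: show that $\{(x_1,\dots,x_{d+1}) : P_{\{x_1,\ldots,x_{d+1}\}} \in \partial A\}$ is $\lambda_{d(d+1)}$-null. This follows by parametrising configurations by their circumcentre $y$, circumradius and the directions of the points. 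This is a smooth change of variables with locally integrable Jacobian, and the set in question has $y \in \partial A$, a null set; since $\mu$ has a density, the conclusion follows. I expect this change-of-variables route to be the cleanest.
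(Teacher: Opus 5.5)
Your committed route (parametrising by circumcentre, circumradius and directions) is correct, but it is not the paper's argument. Your first route also contains a step that fails as stated.

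\textbf{Comparison with the paper.} The paper proves the same intermediate fact: the set $\{(x_1,\ldots,x_{d+1}) : p(x_1,\ldots,x_{d+1}) \in \partial A\}$ is Lebesgue-null in $(\R^d)^{d+1}$. It uses only translation equivariance of the circumcentre, $p(x_1,\ldots,x_{d+1}) = x_1 + p(o,x_2-x_1,\ldots,x_{d+1}-x_1)$. After the unit-Jacobian shear $y_i = x_i - x_1$, Fubini with the $x_1$-integral innermost gives $\lambda_d(\partial A - p(o,y_2,\ldots,y_{d+1})) = 0$ for every fixed $(y_2,\ldots,y_{d+1})$. Absolute continuity of $\mu$ and a union over $I$ then finish.

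Your map $\Psi(y,r,\omega_1,\ldots,\omega_{d+1}) = (y+r\omega_1,\ldots,y+r\omega_{d+1})$ on $\R^d\times(0,\infty)\times(S^{d-1})^{d+1}$ also works. Both sides have dimension $d(d+1)$, and $\Psi$ is injective on affinely independent configurations because the circumsphere is unique. A smooth map between manifolds of equal dimension sends the null set $\partial A\times(0,\infty)\times(S^{d-1})^{d+1}$ to a null set, so you do not even need the explicit spherical Blaschke--Petkantschin Jacobian.

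The underlying idea is the same in both: the circumcentre behaves as a freely translatable coordinate. The paper's version needs nothing beyond a linear change of variables and Fubini. Yours needs the nonlinear parametrisation plus the area formula (or the Lusin $N$ property), and it yields an explicit disintegration that is not needed here.

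\textbf{The flaw in your first route.} The coarea step is fine, but the sweeping claim is false for $d \geq 3$. With the other $d-1 \geq 2$ points of $J$ fixed, $L = P_J$ lies in the fixed affine $2$-plane $P_{J\setminus\{m\}}$ for every value of $X_m$. So varying $X_m$ sweeps out only that plane, not $\R^d$, and the lines are not parallel translates. The hypothesis $\lambda_d(\partial A)=0$ then gives no control on $\lambda_1(L\cap\partial A)$: for a given realisation of the other points, that plane could lie in a flat piece of $\partial A$, and your argument supplies no further averaging over those points.

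The repair is to move all of $\cX_J$ at once by a common translation $v$, using $\int_{\R^d} \lambda_1((L+v)\cap\partial A)\,dv = \int_{\R} \lambda_d(\partial A - u(a))\,da = 0$. That is precisely the paper's device, and once you use it the one-point conditioning and the coarea argument become unnecessary.
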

\begin{proof} 
	Consider just the case $I= [d+1]$. For $(\lambda_d)^{d+1}$-almost
	all values of $(x_1,\ldots,x_{d+1}) \in (\R^d)^{d+1}$,
	the hyperplanes $P_{\{i,j\}} , 1 \leq i < j \leq d+1 $ intersect in
	a single point denoted $p(x_1,\ldots,x_{d+1})$. Moreover
	$p(x_1,\ldots,x_{d+1} ) = x_1 + p(o,x_2-x_1, \ldots,x_d-x_1)$.
	Changing variables to $y_i = x_i -x_1$ for $i =2,\ldots,d$ we obtain
	whenever $\lambda_d(\partial A) = 0$ that
	\begin{align*}
		\int_{(\R^d)^{d+1}} & {\bf 1}_{\{p(x_1,\ldots,x_{d+1})
		\in \partial A 
		\}}
		d(x_1,\ldots,x_{d+1})
		\\
		= 
		& \int_{(\R^d)^d}
		\int_{\R^d}
		{\bf 1}_{\{x_1 + p(o,y_2,\ldots,y_{d+1}) \in \partial A\}}
		dx_1 d(y_2,\ldots,y_d)
		=0,
	\end{align*}
	and the result follows.
\end{proof} 
\begin{lemm}[$\partial(A^{(r)})$ is Lebesgue-null for $r$ small]
	\label{l:fromHPY}
	Suppose $\partial A \in C^{1,1}$. Then there exists $\delta_5 
	= \delta_5(d,A) \in (0,\tau(A)/9)$ such that for all
	$ r \in (0,\delta_5)$ we have  $\lambda_d(\{x \in A:
	\dist(x,\partial A) = r\}) =0$.
\end{lemm}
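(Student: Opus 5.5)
We will show that for $r$ smaller than the reach-type constant $\tau(A)$, the level set $S_r:=\{x \in A: \dist(x,\partial A) = r\}$ is contained in a $C^{1}$ (indeed Lipschitz) hypersurface, or more crudely that it can be covered by $O(\eps^{1-d})$ balls of radius $\eps$ for every small $\eps$. Either way $\lambda_d(S_r)=0$ follows. We take $\delta_5 := \tau(A)/10$, which lies in $(0,\tau(A)/9)$ once we know $\tau(A)>0$ by the sphere condition lemma.

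\emph{Step 1: unique nearest boundary point and the normal map.} Fix $\tau \in (0,\tau(A))$ satisfying the sphere condition, and let $r<\delta_5<\tau$. For $x \in A$ with $\dist(x,\partial A)=r$, pick a nearest point $z \in \partial A$. Since $B(x,r)\subset A$ touches $\partial A$ at $z$, and $\partial A$ is $C^1$ near $z$, the vector $x-z$ is normal to $\partial A$ at $z$ and points inward. Hence
$$x = z + r \hat n_z .$$
Thus $S_r \subset \Phi_r(\partial A)$, where $\Phi_r(z):= z + r\hat n_z$.

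\emph{Step 2: $\Phi_r$ is Lipschitz.} The $C^{1,1}$ hypothesis gives that $z \mapsto \hat n_z$ is Lipschitz on $\partial A$. Locally, $\hat n$ is an explicit Lipschitz function of $\nabla\phi$ in a graph chart, and compactness lets us pass to a global Lipschitz constant $M$. Therefore $\Phi_r$ is Lipschitz on $\partial A$ with constant at most $1+rM \le 1+\tau M$.

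\emph{Step 3: covering and conclusion.} By Lemma \ref{l:Covbdy}(i), $\partial A$ can be covered by $\kappa(\partial A,\eps)=O(\eps^{1-d})$ balls of radius $\eps$. Their images under $\Phi_r$ are contained in $O(\eps^{1-d})$ balls of radius $(1+\tau M)\eps$. Hence
$$\lambda_d(S_r) \le C\eps^{1-d}\eps^{d}=C\eps \to 0 \quad \text{as } \eps \downarrow 0 .$$
Equivalently, $S_r$ is a Lipschitz image of a $(d-1)$-dimensional set, so it has Hausdorff dimension at most $d-1$. For $d=1$, $A$ is a finite union of intervals and $S_r$ is a finite set, so the claim is trivial.

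\emph{Main obstacle.} The delicate step is the global Lipschitz property of $z\mapsto \hat n_z$ in Step 2. We would derive it from the local graph representation together with compactness of $\partial A$. If that route gets messy, an alternative is available: the two-sided sphere condition yields $\|\hat n_z - \hat n_{z'}\|\le \|z-z'\|/\tau$ directly, via the standard positive-reach argument comparing the interior and exterior tangent balls at $z$ and $z'$. Note also that uniqueness of the nearest point is not actually needed. Only the inclusion $S_r\subset \Phi_r(\partial A)$ is used, and that holds for any choice of nearest point.
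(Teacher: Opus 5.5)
Your proof is correct, but it takes a different route from the paper's. The paper localises to a graph chart at each boundary point and imports the tubular-coordinate change of variables from \cite{HPY}:
\[
\int_{A\setminus A^{(s)}}\psi(y)\,dy=\int_{U\times[0,s)}\psi(g(u,t))\,\bigl|\det(\partial g/\partial(u,t))\bigr|\,d(u,t),
\qquad g(u,t)=(u,\phi(u))+t\hat n_{(u,\phi(u))}.
\]
Taking $\psi$ to be the indicator of the level set, the integrand vanishes off the slice $\{t=r\}$, and that slice is null in $U\times[0,s)$.

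You instead use two ingredients. First, every point of $A$ at distance $r$ from $\partial A$ has the form $z+r\hat n_z$; this is correct for $r<\tau$ by the exterior ball. Second, you combine the Lipschitz property of $\Phi_r$ with the covering bound of Lemma~\ref{l:Covbdy}(i).

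Your route is more self-contained. It needs no Jacobian formula and no injectivity of the normal-coordinate map, only Lipschitz continuity of $z\mapsto\hat n_z$. Your two-sided-ball derivation of $\|\hat n_z-\hat n_{z'}\|\le\|z-z'\|/\tau$ is valid. (Sum the inequalities $\|(z+\tau\hat n_z)-(z'-\tau\hat n_{z'})\|\ge2\tau$ and $\|(z'+\tau\hat n_{z'})-(z-\tau\hat n_z)\|\ge2\tau$, which hold because the open interior ball at one point is disjoint from the open exterior ball at the other.) That derivation also avoids any chart-orientation bookkeeping.

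Your route also gives more than the statement: it works for every $r<\tau(A)$, and it shows the level set has $O(\eps^{1-d})$ covering numbers at scale $\eps$, hence finite $(d-1)$-dimensional Hausdorff measure, not merely Lebesgue measure zero. The paper's argument is shorter on the page only because the change of variables is cited rather than proved.

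One cosmetic point: the covering balls from $\kappa(\partial A,\eps)$ need not be centred on $\partial A$. So the images under $\Phi_r$ lie in balls of radius $2(1+\tau M)\eps$ rather than $(1+\tau M)\eps$, which changes nothing.
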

\begin{proof}
	Let $x \in \partial A$ and 
	choose $\delta (x) < \tau(A)/9$ such that after a rotation 
	$\cal R$ about $x$, within the ball
	$B(x,3 \delta(x))^o$ the set $A$ coincides with the closed epigraph of a $C^{1,1}$ function $\phi:U \to \R$, with $U$ an open ball of radius 
	$3 \delta(x)$ in $\R^{d-1}$ centred on $\pi(x)$, where $\pi: \R^d \to \R^{d-1}$ denotes projection onto the first $d-1$ coordinates. That is,
	$$
	\mathcal R(A) \cap B(x,3 \delta(x)) =
	\{(u,s): u \in U, s \in [\phi(u), \infty) \} \cap B(x,3 \delta(x)).
	$$
	By a compactness argument, it suffices to show that 
	if $r \in (0,\delta (x))$ then
	\begin{align}
	\lambda_d(\{y \in A \cap B(x,2\delta(x)): \dist(y,\partial A ) = r\})
	=0.
		\label{e:local}
		\end{align}
	By the argument in \cite{HPY} leading up to \cite[eqn (3.12)]{HPY},
	for bounded measurable $\psi:A \to [0,\infty)$
	supported  by $B(x,2\delta(x))$ and for $0 < s < \delta(x)$ we have
	$$
	\int_{A \setminus A^{(s)}} \psi(y) dy
	= \int_{U \times[0,s)} \psi(g(u,t))
	\Big| \det \Big( \frac{\partial g(u,t)}{\partial (u,t)} \Big) \Big|
	d(u,t),
	$$
	where for $(u,t) \in U \times (0,\delta(x))$ we set 
	$g(u,t):= (u, \phi(u)) + t \hat{n}_{(u,\phi(u))}.$ 
	Taking $r < s$ and
	setting $\psi(y) = {\bf 1}_{\{\dist(y,\partial A) =r\}}
	{\bf 1}_{\{\|y-x\|\leq 2 \delta(x)\}}$ 
	gives us $\psi(g(u,t)) \leq {\bf 1}_{\{t =r\}}$
	so that the integral is zero, and hence \eqref{e:local}
	as required.
\end{proof}
\begin{lemm}
	\label{l:I>=d}
	Suppose $B=A$ and $\partial A \in C^{1,1}$. Let $k,n \in \N$ with
	$k < n$. Let $y \in \cW_{n,k}$.
	If $R_{n,k} \leq \tau(A)/9$ then $|I| \geq d$, where
	$I,I_0$ are defined by taking $R= R_{n,k}$ in \eqref{e:defII0}.
\end{lemm}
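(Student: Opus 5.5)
We argue by contradiction in the same way as in Lemma \ref{l:I>d}(a). Suppose $|I| \leq d-1$. If $y \in A^o$, then Lemma \ref{l:I>d}(a) with $B=A$ already gives $|I| \geq d+1$. So we may assume $y \in \partial A$. As in that proof, $k$-$\dist(y,\X_n)=R:=R_{n,k}$, $|I_0|<k\leq |I\cup I_0|$, and hence $|I|\geq 1$. Choose $\delta>0$ so that $B(y,\delta)$ misses $B(X_i,R)$ for all $i\notin I\cup I_0$. Our goal is to find a direction $e$ and small $a>0$ with $y+ae\in A$ and $\|y+ae-X_i\|>R$ for every $i\in I$. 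Then $k$-$\dist(y+ae,\X_n)>R$ with $y+ae\in A=B$, contradicting $R=\sup_{x\in A}k$-$\dist(x,\X_n)$.

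For each $i\in I$ set $u_i:=X_i-y$, so $\|u_i\|=R$. Moving in direction $e$ increases the distance to $X_i$ to first order exactly when $e\cdot u_i<0$. Since all $X_i\in A$, we use the sphere condition (valid because $R\leq\tau(A)/9<\tau(A)$). The ball $B(y-\tau\hat n_y,\tau)$ meets $A$ only at $y$, so each $X_i$ lies outside its interior, which gives $\|u_i+\tau\hat n_y\|\geq\tau$. Expanding, $\hat n_y\cdot u_i\geq -R^2/(2\tau)\geq -R/18$. So the $u_i$ lie in a region only slightly below the tangent hyperplane. Now take the linear space $H$ spanned by $\{u_i:i\in I\}$. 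Its dimension is at most $|I|\leq d-1$, so $H^\perp$ has dimension at least $1$. We want $e$ in the open inward half-space ($e\cdot\hat n_y>0$, so that small moves stay in $A$ by the interior ball $B(y+\tau\hat n_y,\tau)\subset A$) with $e\cdot u_i<0$ for all $i\in I$.

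Here is the plan for choosing $e$. Start with a unit $w\in H^\perp$ and take its sign so that $w\cdot\hat n_y\geq 0$. If $w\cdot\hat n_y>0$, moving along $w$ is orthogonal to all $u_i$, which gives only a second-order gain $\|y+aw-X_i\|^2=R^2+a^2$. The point $y+aw$ must also be in $A$. It is: it lies in the open inner ball for small $a$, since it has a positive normal component. Hence $y+aw$ is strictly farther than $R$ from every $X_i$, $i\in I$, which is the contradiction. If $w\cdot\hat n_y=0$ for every such $w$, then $\hat n_y\in H$. In that case perturb to $e=w+\eta\hat n_y'$ for a suitable vector. Alternatively, use that $|I|\leq d-1$ leaves room: the constraints $e\cdot u_i<0$ ($i\in I$) together with $e\cdot\hat n_y>0$ number at most $d$ linear strict inequalities. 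They are feasible unless $0$ is a nonnegative combination, i.e. $\hat n_y=\sum_i c_iu_i$ with $c_i\geq0$, or the $u_i$ themselves are positively dependent. We then rule out these degenerate configurations using $\hat n_y\cdot u_i\geq -R/18$ and the distance geometry, moving along $w\in H^\perp$ with a tiny inward tilt and tracking second-order terms.

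The main obstacle is exactly this degenerate case, where $\hat n_y\in H$ and $y$ is on the boundary. There, first-order moves along the tangent plane give only $O(a^2)$ gains, and these must be compared with the $O(a^2/\tau)$ curvature loss from staying inside $A$. I would handle it quantitatively using the $C^{1,1}$ sphere condition. Pick $e$ tangent, in $H^\perp$, and then move along the boundary arc $y+ae+h(a)\hat n_y$ with $0\leq h(a)\leq a^2/\tau$, which stays in $A$ via the inner ball. We get $\|y+ae+h\hat n_y-X_i\|^2 = R^2+a^2+h^2-2h\,\hat n_y\cdot u_i$. Using $\hat n_y\cdot u_i\leq R$, this is $\geq R^2+a^2-2R a^2/\tau> R^2$ because $R\leq\tau/9$. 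This is where the hypothesis $R_{n,k}\leq\tau(A)/9$ enters, and it gives the contradiction.
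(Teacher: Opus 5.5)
Your argument is correct and is essentially the paper's proof: reduce to $y\in\partial A$ via Lemma \ref{l:I>d}, take a direction $e$ orthogonal to all $X_i-y$, $i\in I$ (i.e.\ in the intersection of the tangent hyperplanes), conclude at once if $e\cdot\hat n_y>0$, and in the tangential case use the interior ball of radius $\tau\gg R_{n,k}$ to find points of $A$ arbitrarily close to $y$ lying outside every $B(X_i,R_{n,k})$, $i\in I$. Your second-order estimate along $y+ae+h(a)\hat n_y$ is a quantitative version of the paper's picture of the disks $B(X_i,R_{n,k})\cap\mathbb H_2$ tangent to the $e_1$-axis at $y$; the Gordan-type detour in your middle paragraph is unnecessary, and you should take $\tau$ strictly below $\tau(A)$ (e.g.\ $\tau(A)/2$), which still gives $2R_{n,k}/\tau<1$.
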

\begin{proof}
	Suppose (to get a contradiction) that $|I| \leq d-1$.
	Then by Lemma \ref{l:I>d}, $y \in \partial A$.

	The intersection over all $i \in I$ of the hyperplanes tangent to
	$B(X_i,R_{n,k})$ at $y$ is an affine space of 
	dimension at least 1. Let $e \in \R^d$ be a unit vector such
	that the line $L:= \{y+ae: a \in \R\}$ is tangent to all
	of the balls $B(X_i,R_{n,k}), i \in I$.
	Assume without loss of generality that $e \cdot \hat{n}_y \geq 0$
	(otherwise replace $e$ with $-e$).

	 If $e \cdot \hat{n}_y >0$, then
	 by the sphere condition
	 there exists $\delta >0$ such that 
	 $y+ \delta e \in  A \setminus
	 \cap_{i \in [n] \setminus I \setminus I_0} B(X_i,R_{n,k}) $.
	 Then  $y +  \delta e \in A \setminus \cup_{i \in [n] \setminus I_0} B(X_i,R_{n,k})$ and $|I_0| <k$, so that $k$-$\dist(y+  \delta e,\X_n)
	 > R_{n,k}$, contradicting the definition of $R_{n,k}$.

	 Suppose $e \cdot \hat{n}_y =0$. Without loss of generality,
	 assume $y=o$, 
	 $e = e_1$ and $\hat{n}_y =e_2$. Set $\tau := \tau(A)/2$.
	 Recall $\mathbb H_2:=$span$(e_1,e_2)$.
	 For each $i \in I$, the intersection of $B(X_i,R_{n,k})$
	 with $\mathbb H_2$
	 is a two-dimensional
	 disk with radius at most $R_{n,k}$ with $o$ on its boundary
	 and with the $x$-axis tangent to it. Therefore
	 if $R_{n,k} \leq \tau(A)/9 < \tau/3$, there exist
	 locations in $\mathbb H_2 \cap B(\tau e_2, \tau) \setminus
	 \cup_{i\in I} B(X_i,R_{n,k})$ arbitrarily close to $o$;
	 see Figure \ref{f:tangents} (left). Hence
	  we can choose such a location $z$ say, lying outside
	 $\cup_{i \in [n] \setminus I \setminus I_0} B(X_i,R_{n,k})$.
	 Then by the sphere condition 
	 $z \in A$, and  $k$-$\dist(z, \X_n) > R_{n,k}$,
	 again contradicting the definition of $R_{n,k}$.
\end{proof}

\begin{figure}[p]
\center
\includegraphics[width=0.49\textwidth, trim= 0 10 5 20]{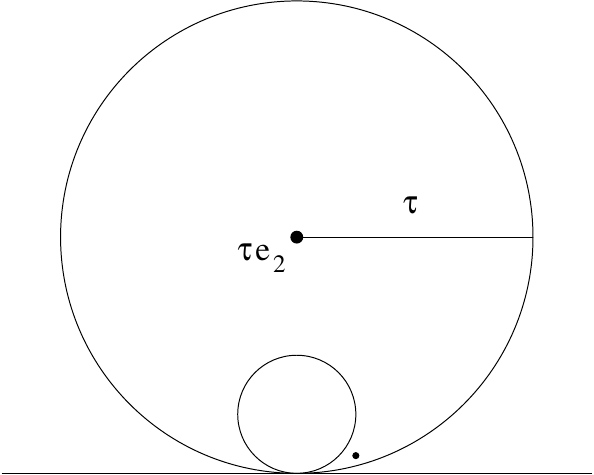}
\includegraphics[width=0.49\textwidth, trim= 0 10 5 20]{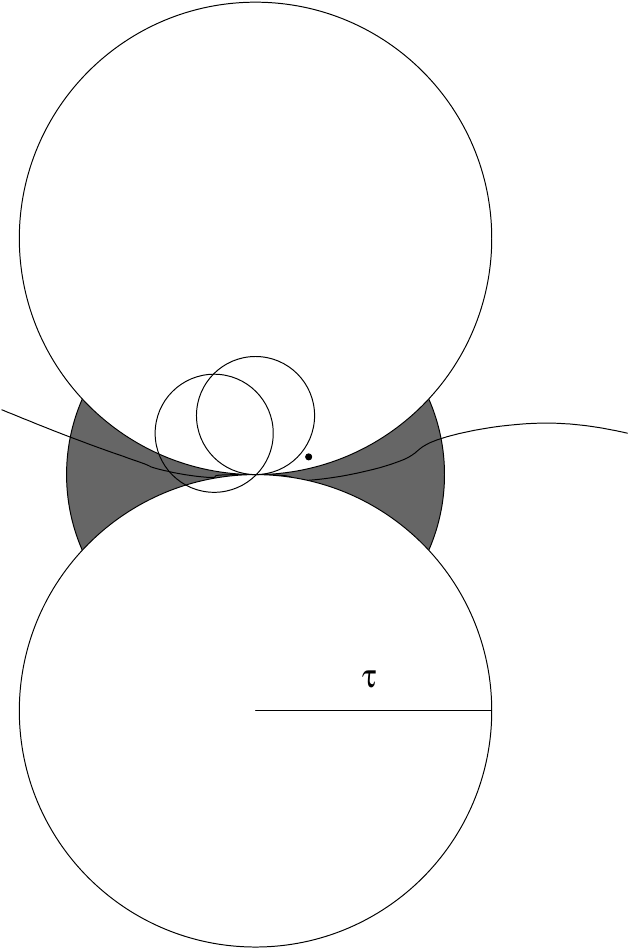}
\caption{\label{f:tangents}
	Left: In the proof of Lemma \ref{l:I>=d},
	there are locations in $\bH_2 \cap B(\tau e_2,\tau) \setminus
	\cup_{i \in I} B(X_i,R_{n,k})$ arbitrarily close to $o$ (the small dot shows one such location).
	Right: In the proof of Lemma 
	\ref{l:Hn},
	the curve is part of $\partial A \cap \bH_2$: near $o$ it is contained
	in the shaded region. The left shaded region is 
	$W^-$. The left-most small circle
	is  $\partial B(X_i,R_{n,k}) \cap \bH_2$, and passes through
	$o $ and $q'$; it is a rotation about $o$
	of the other small circle. The dot is a possible location for the point
	$y$ which lies in
	$A \cap \mathbb H_2 \setminus B(X_i,R_{n,k})$.}
\end{figure}

\begin{lemm}
	\label{l:Hn}
	Suppose $B=A$ and $\partial A \in C^{1,1}$.
	Let $k,n \in \N$ with $n > \max(k,d-1)$.
	  Let $I \subset [n]$ with $|I|=d$, and let
	$H_{n,I}$ be the event that
	  there exist two distinct locations $y, y' \in  
	  \partial A $, both at distance exactly $R_{n,k}$ from each of
	  $\{X_i, i \in I\}$.  Then 
	$\Pr[H_{n,I} \cap \{R_{n,k} \leq \tau(A)/9\}] = 0$.
\end{lemm}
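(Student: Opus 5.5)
The idea is to drop the dependence on $R_{n,k}$ altogether and prove a stronger, purely geometric statement about the $d$ points $\X_I$, by a dimension count. On $H_{n,I}\cap\{R_{n,k}\le \tau(A)/9\}$ there exist distinct $y,y'\in\partial A$ and $\rho\in(0,\tau(A)/9]$ with $\|y-X_i\|=\|y'-X_i\|=\rho$ for all $i\in I$. Call the event that such $(y,y',\rho)$ exist $G_I$. It is determined by $(X_i)_{i\in I}$ alone. Since $(X_i)_{i\in I}$ has a density on $(\R^d)^d$, it suffices to show that the set $E\subset(\R^d)^d$ of configurations $(x_1,\dots,x_d)$ for which such $(y,y',\rho)$ exist has $\lambda_{d^2}(E)=0$. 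The restriction $\rho\le\tau(A)/9$ is not needed for this; it only keeps everything local.

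\textbf{Parametrizing $E$.} Given distinct $y,y'$ and $\rho>\|y-y'\|/2$, the set $S(y,y',\rho)$ of $x$ with $\|x-y\|=\|x-y'\|=\rho$ is a $(d-2)$-sphere. It lies in the bisecting hyperplane of $[y,y']$, is centred at $(y+y')/2$, and has radius $(\rho^2-\|y-y'\|^2/4)^{1/2}>0$. If $\rho\le\|y-y'\|/2$ the set has at most one point, and it is still covered by the countable decomposition below. Cover $\partial A$ by countably many charts, each the graph of a Lipschitz function $\phi$ over an open subset of $\R^{d-1}$ (by the $C^{1,1}$ hypothesis). For $m\in\N$ restrict to $\|y-y'\|\ge 1/m$ and $\rho-\|y-y'\|/2\ge 1/m$, and use polar coordinates on the $(d-2)$-sphere built from an orthonormal frame of the bisecting hyperplane, chosen locally Lipschitz in $(y,y')$. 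This exhibits $E$ as contained in a countable union of images of locally Lipschitz maps
\[
(u,u',\rho,\omega_1,\dots,\omega_d)\mapsto (x_1,\dots,x_d),
\]
where $u,u'\in\R^{d-1}$ parametrize $y=(u,\phi(u))$ and $y'=(u',\phi(u'))$ in rotated coordinates, and each $\omega_i$ ranges over the unit sphere of dimension $d-2$.

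\textbf{Dimension count.} The domain has dimension $2(d-1)+1+d(d-2)=d^2-1<d^2$. A locally Lipschitz image of a set of Hausdorff dimension $d^2-1$ has zero $\lambda_{d^2}$-measure. Hence $\lambda_{d^2}(E)=0$, so $\Pr[G_I]=0$ and the lemma follows.

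\textbf{Degenerate cases.} For $d=1$ we have $|I|=1$ and $\partial A$ is finite. The event then requires two distinct boundary points at equal distance from $X_i$, which pins $X_i$ to finitely many midpoints, a null event. For $d=2$ the spheres $S(y,y',\rho)$ are two-point sets, and the count still gives $3+0<4$.

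\textbf{Main obstacle.} I expect the main care to be needed in checking that the parametrization is genuinely locally Lipschitz. The frame of the bisecting hyperplane must be chosen Lipschitz locally in $(y,y')$, which is fine on small patches away from $y=y'$. The radius $(\rho^2-\|y-y'\|^2/4)^{1/2}$ is Lipschitz once it is bounded away from $0$, which is arranged by the countable decomposition indexed by $m$. Once this is set up, the measure-zero conclusion is standard.
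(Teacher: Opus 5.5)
Your proof is correct, but it takes a genuinely different route from the paper's.

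\textbf{The paper's route.} The paper argues by contradiction using planar geometry. On the full-probability event $D_n\cap D_n'$, the line $P_I$ meets $\partial A$ at the two points $q,q'$ at distance $R_{n,k}$ from $\X_I$. It places $q$ at $o$ with inward normal $e_2$ and $q'\in\bH_2$. The sphere condition and $R_{n,k}\le\tau(A)/9$ then force $q'$ into a thin wedge $W^-$ (or $W^+$). This in turn forces each disk $B(X_i,R_{n,k})\cap\bH_2$, $i\in I$, to be a small rotation of $B_2(S_ie_2,S_i)$ about $o$. That leaves locations of $A\cap\mathbf Q_2$ arbitrarily close to $o$ whose $k$-distance from $\X_n$ exceeds $R_{n,k}$, contradicting the definition of $R_{n,k}$.

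\textbf{Your route.} You instead show, by a dimension count, that the configurations of $\X_I$ admitting two distinct boundary points at a common distance from all of $\X_I$ form a $\lambda_{d^2}$-null set. This gives a stronger and cleaner statement. You need neither the sphere condition, nor $R_{n,k}\le\tau(A)/9$, nor anything about $\X_{[n]\setminus I}$; you only use that $\partial A$ is covered by countably many Lipschitz graphs.

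\textbf{What each buys.} Your argument proves the lemma exactly as stated. The paper's contradiction relies on $|I_0|<k$, that is, on $q$ being a furthest location ($q\in\cW_{n,k}$). That holds where the lemma is applied in Proposition \ref{p:uniquefurth}, but it is not part of the stated definition of $H_{n,I}$. In exchange, the paper's route stays elementary: it needs no Hausdorff-measure facts about Lipschitz images and no Lipschitz choice of frame for the bisecting hyperplane, and it reuses the sphere-condition picture of Lemma \ref{l:I>=d}.

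\textbf{A small slip.} The case $\rho=\|y-y'\|/2$ is not covered by your decomposition with $\rho-\|y-y'\|/2\ge 1/m$. It is trivially null, though: for $d\ge2$ it forces $x_1=\cdots=x_d=(y+y')/2$, and in particular $x_1=x_2$.
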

\begin{proof}
	Without loss of generality, assume $1 \in I$. 
	If $D_n$ occurs, then  the set $P_I$ is a line, 
and if $H_{n,I} \cap D_n$ occurs then both points of $P_I$ at distance $R_{n,k}$
from $X_1$ (denoted $q$ and $q'$, say) 
lie in $\partial A$.
If moreover the event $D'_n$ 
	(defined in Lemma \ref{l:dAvtx})
	occurs, then
$k$-$\dist (q,\X_n \setminus \X_I) > R_{n,k}$ and
$k$-$\dist (q',\X_n \setminus \X_I) > R_{n,k}$.
We now show that if $R_{n,k} \leq \tau(A)/9$, then this cannot happen. 

Without loss of generality,  we assume $q=o$ and $\hat{n}_q = e_2$,
and moreover that $q' \in \mathbb H_2$.
	Let $I_0 := \{i \in [n]: \|X_i\| < R_{n,k}\}.$ Then
$|I_0 | < k$.

Let $\tau := \frac12 \tau(A)$.
          By the sphere condition
	  $B(\tau e_2,\tau) \subset A$ and $B(-\tau e_2, \tau) \cap A
	  = \{o\}$. Therefore 
          $$
	  \partial A 
	  \cap (B(\tau e_2,\tau)^o
          \cup B(-\tau e_2,\tau)^o) = \varnothing.
          $$

Suppose $q' \cdot e_1 \leq 0$ (the case 
 $q' \cdot e_1 \geq 0$ is treated analogously).
	  Since $q' \in   \partial A  \cap B(o,2R_{n,k}) $,
	  we have $q' \in W^-$, 
	  where (see Figure \ref{f:tangents}
	  (right)) we define the wedge-shaped set
          \begin{align*}
		  W^-:= \mathbb H_2 \cap \{x \in B(o,2R_{n,k}) \setminus
                  ( (B(\tau e_2,\tau)^o
		  \cup (B(-\tau e_2,\tau))^o): x \cdot e_1 < 0 \}.
          \end{align*}
	  For each $i \in I$, the set
	  $B(X_i,R_{n,k})
	\cap \mathbb H_2$ 
	is a disk of radius  denoted $S_i$, 
 where $S_i \leq R_{n,k} \leq \tau(A)/9$, with centre denoted $Y_i$
 (where $Y_i$ is the projection of $X_i$ onto $\mathbb H_2$).
	Therefore since $q' \in W^- \cap \partial B(X_i, R_{n,k})$
	and $B_2(S_ie_2, S_i)$
 (a 2-dimensional ball in $\mathbb H_2$) is contained in
	$B(\tau e_2,\tau)^o \cup \{o\}$ (and hence disjoint from $W^-$),
 the disk $B_2(Y_i, S_i)$ must be obtained
 by a small counterclockwise rotation of $B_2(S_i e_2, S_i)$ about
 $(0,0)$; see Figure \ref{f:tangents} (right).
	Thus, defining the quadrant $\mathbf Q_2 := \{a e_1 + b e_2: a,b
	\in (0,\infty)\}$, we have
$$
B_2( Y_i,S_i) \cap \mathbf Q_2 
	\subset B_2(S_ie_2, S_i)
	\subset B_2(S_Ie_2, S_I),
$$
where we set $S_I := \max_{i \in I} S_i$.
Choose $\delta \in (0,\tau(A)/9)$ with $
\cup_{i \in [n ] \setminus I \setminus I_0} B(X_i,R_{n,k}) 
\cap
B(o,2 \delta) 
= \emptyset$, and let
$$
y \in \mathbf Q_2 \cap B(0,\delta) \cap B(\tau e_2, \tau) \setminus B 
(S_I e_2, S_I);
$$
again see Figure \ref{f:tangents} (right).  Then
$y \in A \setminus
\cup_{i \in [n] \setminus I_0 } B(X_i,R_{n,k})$, contradicting the definition of
$R_{n,k}$.
Therefore $(H_{n,I} \cap \{R_{n,k} \leq \tau(A)/9 \})
\subset (D_n \cup D'_n)^c$, so that
$\Pr[H_{n,I} \cap \{R_{n,k} \leq \tau(A)/9 \}] =0$ as claimed.
\end{proof}

Given $J \subset [n]$ with $|J|=d$, let $q_J$ denote the
location in $P_J$ closest to $\cX_J$, where 
	the set $P_J$ was defined at \eqref{e:defPI}.
	Define the sets
	$$
	P_J^+ := \{x \in P_J: (x-q_J) \cdot e_{\ell(J)} \geq 0\};
	~~~ 
	P_J^- := \{x \in P_J: (x-q_J) \cdot e_{\ell(J)} \leq 0\},
	$$
	where $\ell(J)$ is the first index $i \in [d]$ such
	that $e_i$ is not orthogonal to $P_J$.
	If event $D_n$ occurs,
	where $D_n$ was defined just after 
	\eqref{e:defPI}, then $P_J$ 
	is 1-dimensional, i.e. a line, and 
	$P_J^+$ and $P_J^-$ are half-lines each with endpoint  $q_J$. 
	Next, given $\alpha >0$ define
	\begin{align*}
	R_{J,\alpha}^+ :=
	 \sup \{ \dist(y,\X_J): y \in P_J^+ \cap
	 \partial A, \dist(y, \cX_J) \leq \alpha \}; \\
	R_{J,\alpha}^- :=
	 \sup \{ \dist(y,\X_J): y \in P_J^- \cap
	 \partial A, \dist(y, \cX_J) \leq \alpha \}, 
	\end{align*}
	with the convention that $\sup(\emptyset) := +\infty$.
	Also set $\mathbb Q_+:= \mathbb Q \cap (0,\infty)$.
	The next lemma says that if $R_{n,k} \leq \tau(A)/9$,
	then $R_{n,k} = R^+_{J,\alpha}$ or
	 $R_{n,k} = R^-_{J,\alpha}$ for some 
	 $\alpha \in \mathbb Q_+$ and $J \subset [n]$ with $|J| =d$.
	 This is useful because
	 $R_{J,\alpha}^+$ and $R_{J,\alpha}^-$ 
	 are determined just by $\X_J$ without reference to
	 $\X_{[n] \setminus J}$.
	\begin{lemm}
		\label{l:Rdot}
		Let $n, k \in \N$ with $n \geq k$.
		Suppose $\partial A \in C^{1,1}$ and $B=A$. Suppose 
		$R_{n,k} \leq \tau(A)/9$ and 
	moreover	$D_n \cap D'_n$ occurs.
		Let
		$w \in \cW_{n,k} \cap \partial A$.
		Then there exists $J \subset [n]$ with $|J|= d$,
		$\alpha \in  \mathbb Q_+$, and
		$\bullet \in \{+,-\}$ 
		such that $w \in P_J^\bullet$ and 
		$$
		R_{n,k} = R_{J,\alpha}^\bullet = \dist(w,\cX_J).
		$$
	\end{lemm}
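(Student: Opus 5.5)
We work with the index set $I := \{i \in [n] : \|w-X_i\| = R_{n,k}\}$ from \eqref{e:defII0}, taking $y=w$ and $R = R_{n,k}$. Since $R_{n,k} \leq \tau(A)/9$, Lemma \ref{l:I>=d} gives $|I| \geq d$. Because $w \in \partial A$ and $D'_n$ occurs, Lemma \ref{l:dAvtx} rules out $|I| \geq d+1$: otherwise $w \in P_{I'} \cap \partial A$ for some $I' \subset I$ with $|I'| = d+1$. Hence $|I| = d$, and we set $J := I$. Then $w \in P_J$, and $\dist(w,\cX_J) = R_{n,k}$ since all points of $\cX_J$ are at distance exactly $R_{n,k}$ from $w$.

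On $D_n$, $P_J$ is a line and $P_J = P_J^+ \cup P_J^-$, so we may choose $\bullet \in \{+,-\}$ with $w \in P_J^\bullet$. On $P_J$, the function $x \mapsto \dist(x,\cX_J)$ equals $\|x - X_j\|$ for any fixed $j \in J$. It is therefore strictly increasing in $\|x - q_J\|$ along each half-line $P_J^\pm$, and takes each value at most once on each half-line. Now pick $\alpha \in \mathbb Q_+$ with $R_{n,k} \leq \alpha$. The point $w$ is admissible in the supremum defining $R^\bullet_{J,\alpha}$, so $R^\bullet_{J,\alpha} \geq R_{n,k}$.

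The main obstacle is the reverse inequality: there should be no other location $y \in P_J^\bullet \cap \partial A$ with $R_{n,k} < \dist(y,\cX_J) \leq \alpha$. We will choose $\alpha$ only slightly larger than $R_{n,k}$. The set $P_J^\bullet \cap \partial A \cap \{\dist(\cdot,\cX_J) \le \alpha'\}$ is compact for each $\alpha'$, and we have a finite set of relevant indices. So it suffices to show that $w$ is isolated from above: there is $\eta > 0$ such that no $y \in P_J^\bullet \cap \partial A$ has $\dist(y,\cX_J) \in (R_{n,k}, R_{n,k}+\eta)$. Given this, any rational $\alpha \in [R_{n,k}, R_{n,k}+\eta)$ yields $R^\bullet_{J,\alpha} = R_{n,k} = \dist(w,\cX_J)$.

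To prove isolation, suppose there are $y_m \in P_J^\bullet \cap \partial A$ with $\dist(y_m,\cX_J) \downarrow R_{n,k}$. By the monotonicity along the half-line, $y_m \to w$. The half-line is then a line through $w$ meeting $\partial A$ in points accumulating at $w$, so it must be tangent to $\partial A$ at $w$. If it were transversal, the sphere condition would force it to enter $B(w+\tau\hat n_w,\tau)^o$ or $B(w-\tau\hat n_w,\tau)^o$ near $w$, and these open balls are disjoint from $\partial A$.

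In the tangent case we run the geometric argument of Lemma \ref{l:Hn}. This argument is the delicate step. Let $I_0 := \{i : \|X_i - w\| < R_{n,k}\}$. For $y_m$ close to $w$, no ball $B(X_i,R_{n,k})$ with $i \notin J \cup I_0$ comes near $w$. So $k$-$\dist(y_m,\X_n) \le R_{n,k}$ forces $y_m$ to be covered by some $B(X_i,R_{n,k})$ with $i\in J$, which contradicts $\dist(y_m,\cX_J) > R_{n,k}$.

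Since every $y_m \in \partial A \subset A = B$, this is a contradiction with the definition of $R_{n,k}$. That contradiction gives isolation and completes the proof.
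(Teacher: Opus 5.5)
Your proposal is correct and follows essentially the paper's route: you take $J=I$ using Lemma \ref{l:I>=d} and $D'_n$. You then choose rational $\alpha$ slightly above $R_{n,k}$ so that any $z \in P_J^\bullet \cap \partial A$ with $R_{n,k} < \dist(z,\cX_J) \leq \alpha$ lies so close to $w$ that only the fewer than $k$ points indexed by $I_0$ can be within $R_{n,k}$ of it, which contradicts the definition of $R_{n,k}$. The paper makes the choice of $\alpha$ explicit via $\|z-w\|^2 \leq \dist(z,\cX_J)^2 - R_{n,k}^2$, where you argue with a convergent sequence. Your tangent/transversal case split and the appeal to the geometry of Lemma \ref{l:Hn} are unnecessary, because this proximity argument works regardless of tangency and never uses the sphere condition; you should, however, state explicitly that $|I_0|<k$, since that is what makes the argument go through.
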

	\begin{proof}
	By Lemma \ref{l:I>=d} and the assumption that $D'_n$ occurs,
	there is a unique set $J \subset [n]$ with $|J|=d$
	such that $w \in P_J$ with $\dist(w,\cX_J) = R_{n,k}$.
	Choose $\bullet \in \{+,-\}$ such that $w \in P_J^\bullet$.
	
	Define the index set
	$
	I_0 := \{i \in [n]: \| X_i - w \| < R_{n,k}\},
	$
		and set $M:= \dist(w, \X_{[n] \setminus J \setminus I_0})$,
		so that $M > R_{n,k}$. 
	Choose rational  $\alpha > R_{n,k}$ such that
	$$
	\alpha - R_{n,k} < \min \Big( R_{n,k}, \frac{(M-R_{n,k})^2}{3 R_{n,k}}
	\Big).
	$$
		To complete the proof, it suffices to
		show that  $R_{n,k} = R_{J,\alpha}^\bullet$.
		Note first that
	$
	\alpha^2 - R_{n,k}^2 = (\alpha+R_{n,k})(\alpha - R_{n,k})
	\leq 3 R_{n,k}(\alpha- R_{n,k}).
	$
	Next, note that
	if $z \in P_J^\bullet \cap \partial A$ with
	$\dist(z,\X_J) \in (R_{n,k},\alpha)$ then
	by the cosine rule
		$\alpha^2 > \dist(z,\X_J)^2 \geq 
		\|z-w\|^2+ R_{n,k}^2$ so
	$ \|z-w\| \leq \sqrt{\alpha^2-R_{n,k}^2} $
		and hence by the triangle inequality
	\begin{align*}
		\dist(z,\X_{[n] \setminus J \setminus I_0} )
		\geq M - 
	\sqrt{\alpha^2-R_{n,k}^2}
		\geq 
		M -  \sqrt{3 R_{n,k} (\alpha - R_{n,k}) } 
		> R_{n,k},
	\end{align*}
	and hence $k$-$\dist(z,\X_n) 
		> R_{n,k}$,
	contradicting the definition of $R_{n,k}$. 
		Hence there is no such $z$, so
		$R_{n,k} = R^\bullet_{J, \alpha}$ as required.
	\end{proof}

\begin{proof}[Proof of Proposition \ref{p:uniquefurth}]
	Let $k, n \in \N$ with $k \leq n$.
	By Proposition \ref{lemWn}, it suffices to prove that
	$\Pr[ \{ |\cW_{n,k}| > 1 \} 
	\cap \{ |\cW_{n,k} \cap A^o | \leq 1 \} 
	\cap \{R_{n,k} \leq \delta_5\}] = 0$, where
	$\delta_5$ is as in Lemma
	\ref{l:fromHPY}.

	  Suppose $\mathcal W_{n,k} \cap \partial A$ and 
	   $ \cW_{n,k} \cap A^o  $ are both non-empty,
	   and
	   $R_{n,k} \leq \tau(A)/9$, and $D_n \cap D'_n$ occurs.
	By Lemma \ref{l:Rdot} there exist $I \subset [n]$
	with $|I|=d$, $\alpha \in  \mathbb Q_+$
	and $\bullet \in \{+,-\}$ with
	$R_{n,k} =R^\bullet_{I,\alpha}$.
	Also  by Lemma \ref{l:I>d} there exists $J \subset [n]$
	with $|J|=d+1$ and $R_{n,k} = \dist(Y_J,\X_J)$. Therefore
	\begin{align*}
		\mathbb P[\{W_{n,k} & \cap \partial A \neq \emptyset\}
	\cap \{W_{n,k} \cap A^o \neq \emptyset \}
		\cap \{R_{n,k} \leq \delta_5\} ]
		\\
		& \leq \sum_{I,J\subset [n]: |I|= d,|J| =d +1}
		\Big(
		\sum_{\alpha \in \mathbb Q_+ , \bullet \in \{+,-\}}
		\Pr [
		\dist(Y_J,\X_J) 
			= 
			R_{I,\alpha}^\bullet 
		] \Big).
	\end{align*}
	Let $I,J \subset [n]$
	with $|I|= d$ and $|J|= d+1$. 
	Select an index
	$j \in J \setminus I$. 
	If $D_n$ occurs, the set $P_{J \setminus \{j\}}$
	is a line and there are at most two elements 
	of the set
	$$
	V := \{y \in P_{J \setminus \{j\}}: \dist (y,\X_{J \setminus \{j\}}) 
	= R_{I,\alpha}^\bullet \},
	$$
	and to have 
		$
		\dist(Y_J,\X_J) 
		= 
		R_{I,\alpha}^\bullet 
		$
		we require that $\|Y_j-u\| = R_{I,\alpha}^\bullet$
		for some $u \in V$,
		 an event of probability zero.
		 Hence
		 \begin{align}
	\mathbb P[\{W_{n,k} \cap \partial A \neq \emptyset\}
	\cap \{W_{n,k} \cap A^o \neq \emptyset \}
		\cap \{R_{n,k} \leq \delta_1\} ]
			 =0.
			 \label{e:0726a}
		 \end{align}

Now suppose $\{|\cW_{n,k} \cap \partial A| > 1\} \cap
	\{R_{n,k} \leq \delta_5 \} \cap D_n \cap D'_n$
	occurs.  Let $y,y'$ be distinct elements of
$\cW_{n,k} \cap \partial A $. 
	By Lemma \ref{l:Rdot} there exist $I,I' \subset [n]$
	with $|I|= |I'|=d$, $\alpha, \alpha' \in  \mathbb Q_+$
	and $\bullet, \bullet' \in \{+,-\}$ with
	$R_{n,k} =R^\bullet_{I,\alpha} = R^{\bullet'}_{I',\alpha'}$ 
	and $y \in P^\bullet_{I}$, $y' \in P^{\bullet'}_{I'}$,
	and moreover $\dist(y,\cX_I) = R_{n,k}
	= \dist(y',\cX_{I'})$.

	If $I = I'$ then event $H_{n,I}$, defined in Lemma \ref{l:Hn},
	occurs. Therefore
	\begin{align*}
		\mathbb P[ \{| \cW_{n,k} \cap \partial A | > 1\}
		& \cap \{R_{n,k} \leq \delta_5 \}]
		 \leq \sum_{I \subset [n]:|I|=d} \mathbb P[H_{n,I}
		 \cap \{R_{n,k} \leq \delta_5\}]
		\\
		& + \sum_{I,I' \subset [n]: |I|= |I'|= d,I \neq I'}
		\Big(
		\sum_{\bullet,\bullet' \in \{+,-\}, \alpha, \alpha'
		\in \mathbb Q_+ 
		}
		\mathbb P
		[ R^\bullet_{I,\alpha}
		= R^{\bullet'}_{I',\alpha'} \leq \delta_5 ]
		\Big).
	\end{align*}
	Let $I, I' \subset [n]$ with $|I|=|I'|=d$ and $I \neq I'$.
	Let $\alpha, \alpha' \in \mathbb Q_+$ and $\bullet, \bullet'
	\in \{+,-\}$.
	Let $j \in I' \setminus I$. Then 
	\begin{align*}
		\Pr[ R^\bullet_{I,\alpha} = R^{\bullet'}_{I',\alpha'} \leq
		\delta_5]
		\leq \mathbb P[ \dist(X_{j},\partial A)=  
		R_{I,\alpha}^\bullet \leq \delta_5],
	\end{align*}
and this probability is zero by Lemma
	\ref{l:fromHPY}.
	Also by Lemma \ref{l:Hn}, 
	$\Pr[H_{n,I} \cap \{R_{n,k} \leq \delta_5\}] =0$ for each $I \subset [n]$ with $|I|=d$.
	Hence 
		$\mathbb P[ \{|\cW_{n,k} \cap \partial A | > 1\}
		 \cap \{R_{n,k} \leq \delta_5 \}] =0$,
		and combined with \eqref{e:0726a} this
		completes the proof.
\end{proof}

\section{Proof of Theorems \ref{t:record} and \ref{Thm2}}
\allco

Recall that 
	$\cW_{n,k}$ and $W_{n,k}$  were defined at \eqref{e:cWdef},
	and that $f_0:= \inf_{x \in B} f(x)$.
\begin{lemm}
\label{lemWn2}
	Suppose $B \subset A^o$.
	Then $f(W_{n,k}) \to f_0$ as $n \to \infty$, almost surely.
\end{lemm}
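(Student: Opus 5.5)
We argue by contradiction using the sharp asymptotics of Lemma \ref{lem1} together with the localized upper bound of Lemma \ref{l:upper}. Since $W_{n,k} \in B$, we always have $f(W_{n,k}) \geq f_0$, so only $\limsup_n f(W_{n,k}) \leq f_0$ a.s. needs proof. Fix $\eps>0$ and set $D_\eps := \{x \in B: f(x) \geq f_0 + \eps\}$, which is compact because $f$ is continuous and $B$ is compact. If $D_\eps = \emptyset$ there is nothing to prove, so assume $D_\eps\neq\emptyset$. The key observation is that if $W_{n,k} \in D_\eps$, then since $W_{n,k}$ is at $k$-distance exactly $R_{n,k}$ from $\X_n$, the set $D_\eps$ is not contained in $F_{n,k,r}$ for any $r<R_{n,k}$. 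Hence $R_{n,k}(D_\eps) \geq R_{n,k}$, and since trivially $R_{n,k}(D_\eps)\le R_{n,k}$, we get $R_{n,k}(D_\eps) = R_{n,k}$ on that event.

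Next we bound $R_{n,k}(D_\eps)$ from above. Because $B \subset A^o$, the set $D_\eps$ is a compact subset of $A^o$, so $D_\eps \cap \partial A = \emptyset$ and $f_1(D_\eps) = +\infty$. With the conventions $\inf(\emptyset)=+\infty$ and $1/{+\infty}=0$, the bound \eqref{e:upper1} of Lemma \ref{l:upper} gives, almost surely,
$$
\limsup_{n\to\infty} n\theta R_{n,k}(D_\eps)^d/\log n \leq 1/f_0(D_\eps) \leq 1/(f_0+\eps).
$$
Lemma \ref{l:upper} as stated requires $\partial A \in C^{1,1}$ for \eqref{e:upper1}. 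However, when $D\cap\partial A=\emptyset$ its proof uses only Lemma \ref{lemmeta} applied to $A_r = D$ (for small $r$, $D \subset A^{(r)}$). That step needs $\kappa(D,r) = O(r^{-d})$ and $\mu(B(x,s)) \geq (f_0(D)-\eps')\theta s^d$, and both hold with no boundary regularity because $\dist(D,\partial A)>0$. Alternatively, one can apply Lemma \ref{l:Sn} directly with $A_r := D_\eps$, $a=(f_0+\eps-\eps')\theta$ and $b=d$, noting that $S_{n,k}$ then coincides with $R_{n,k}(D_\eps)$.

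On the other hand, the hypotheses of Theorem \ref{t:record}-type (namely $\lambda_d(B)>0$ and $\lambda_d(\partial B)=0$, which follow from $B=\overline{B^o}$ and $\dim(\partial B)<d$) allow us to apply \eqref{0315a}, which gives $n\theta f_0 R_{n,k}^d/\log n \to 1$ a.s. If $W_{n,k}\in D_\eps$ held for infinitely many $n$, then along those $n$ we would have
$$
1/f_0 = \lim n\theta R_{n,k}^d/\log n \leq \limsup n\theta R_{n,k}(D_\eps)^d/\log n \leq 1/(f_0+\eps),
$$
which is a contradiction. Hence, almost surely, $f(W_{n,k}) < f_0+\eps$ for all large $n$. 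Taking $\eps = 1/m$ for $m \in \N$ and intersecting the resulting countably many almost-sure events yields $f(W_{n,k}) \to f_0$ a.s.

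The main obstacle is checking that the hypotheses used are actually available in the setting of the lemma. This means confirming that the upper bound of Lemma \ref{l:upper} (or Lemma \ref{l:Sn}) applies to the interior set $D_\eps$ without a $C^{1,1}$ assumption on $\partial A$, and that \eqref{0315a} applies to $B$. If \eqref{0315a} is not available in the generality of the lemma, one would instead need a separate lower bound $\liminf n\theta f_0 R_{n,k}^d/\log n \geq 1$. The other steps are routine.
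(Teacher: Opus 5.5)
Your proof is correct and follows essentially the paper's route: you show that the $k$-coverage threshold of the high-density part $D_\eps=\{x\in B: f(x)\ge f_0+\eps\}$ is eventually strictly smaller than $R_{n,k}$, which forces $W_{n,k}$ out of $D_\eps$. The only difference is that you bound $R_{n,k}(D_\eps)$ from above via Lemma \ref{l:Sn}, whereas the paper cites the exact limit \eqref{0315a} for $B_\eps$; your version is slightly more careful, since $B_\eps$ need not satisfy $\lambda_d(B_\eps)>0$ and $\lambda_d(\partial B_\eps)=0$, and both arguments use \eqref{0315a} for $B$ itself, i.e.\ the hypotheses of Theorem \ref{t:record}.
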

\begin{proof}
	Let $\eps >0$ and
let
	$
	B_{\eps} := B \cap f^{-1}([f_0 + \eps, \infty)).
	$
If $B_\eps \neq \emptyset$, then by 
	\eqref{0315a} from
	Lemma \ref{lem1}, almost surely
$$
	\lim_{n \to \infty} n \theta R_{n,k}(B_\eps)^d
	= \frac{1}{\inf_{x \in B_\eps} f(x)} < \frac{1}{f_0} = 
	\lim_{n \to \infty} n \theta R_{n,k}(B)^d,
	$$
	so that for large enough $n$ we have
	$
	R_{n,k}(B_\eps) < R_{n,k}(B) =: R_{n,k}$ and hence
	$ \sup_{x \in B_\eps}(k$-$\dist(x,\X_n)) < R_{n,k}$
	so $B_\eps \cap  \cW_{n,k} = \emptyset$ and $W_{n,k}
	\notin B_\eps$ (this also holds if $B_\eps = \emptyset$).
	The result follows.
\end{proof}
\begin{lemm}
\label{lemEn}
	Suppose $B \subset A^o$ and $\dim(\partial B) < d$.
	Let $k \in \N$.
	 Then with probability 1,
	 ${\cal W}_{n,k} \cap \partial B = \emptyset$ 
	 for all but finitely many $n$.
\end{lemm}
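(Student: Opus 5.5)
The plan is to show that the $k$-coverage threshold of the small set $\partial B$ is eventually strictly smaller than $R_{n,k}=R_{n,k}(B)$. Then no location of $\partial B$ can attain the maximal $k$-distance, so $\cW_{n,k}\cap\partial B=\emptyset$.

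For the upper bound on $\partial B$, I would apply Lemma \ref{l:Sn} with the constant family $A_r:=\partial B$. With this choice, $S_{n,k}=\inf\{r>0:\partial B\subset F_{n,k,r}\}=\sup_{x\in\partial B}k\mbox{-}\dist(x,\X_n)$. Two hypotheses of Lemma \ref{l:Sn} need checking.

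First, the measure lower bound. Since $B\subset A^o$ is compact, there is $r_0>0$ with $B(x,r_0)\subset A$ for all $x\in B$. Given $\eps\in(0,f_0)$, continuity of $f$ and compactness of $B$ then give $\mu(B(x,s))\geq (f_0-\eps)\theta s^d$ for all $x\in\partial B$ and $s\in(0,r)$, once $r$ is small enough. So we may take $a=(f_0-\eps)\theta$.

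Second, the covering bound. Fix $b$ with $\dim(\partial B)<b<d$; we need $\kappa(\partial B,r)=O(r^{-b})$ as $r\downarrow 0$. Lemma \ref{l:Sn} then yields, almost surely,
$$\limsup_{n\to\infty} nS_{n,k}^d/\log n\leq \frac{b}{(f_0-\eps)\theta d}.$$
Since $b<d$, we can choose $\eps$ small enough that the right-hand side is strictly less than $1/(\theta f_0)$.

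For the lower bound on $R_{n,k}$, I would use \eqref{0315a} of Lemma \ref{lem1}. Its hypotheses hold here: $\lambda_d(B)>0$ because $B=\overline{B^o}\neq\emptyset$, and $\lambda_d(\partial B)=0$ because $\dim(\partial B)<d$. Hence $nR_{n,k}^d/\log n\to 1/(\theta f_0)$ almost surely. Comparing with the previous bound, almost surely $S_{n,k}<R_{n,k}$ for all large $n$. For such $n$, every $x\in\partial B$ has $k\mbox{-}\dist(x,\X_n)\leq S_{n,k}<R_{n,k}$, so $x\notin\cW_{n,k}$, which is the claim.

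The main obstacle is the covering bound $\kappa(\partial B,r)=O(r^{-b})$ with $b<d$. This is immediate if $\dim(\partial B)<d$ is read as an upper box-counting dimension bound, and that is the reading I would adopt. Hausdorff dimension alone does not in general control $\kappa(\partial B,r)$ at every scale $r$. If only Hausdorff dimension is available, I would first try to get a bound along the geometric sequence $r_n=(\alpha\log n/n)^{1/d}$ actually used in Lemma \ref{lemmeta}. If that fails, I would fall back on stating the hypothesis as a Minkowski (box) dimension condition, which covers all natural examples such as Lipschitz boundaries.
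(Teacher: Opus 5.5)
Your proof is the paper's proof: Lemma \ref{l:Sn} with $A_r:=\partial B$, $a=(f_0-\eps)\theta$ and some $b<d$, followed by comparison with \eqref{0315a}. In that comparison, as you note, $\lambda_d(B)>0$ has to be imported from the hypothesis $B=\overline{B^o}\neq\emptyset$ of Theorem \ref{t:record}.

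Your caveat is well founded and applies equally to the paper. The paper asserts $\kappa(\partial B,r)=O(r^{\delta-d})$ directly from the Hausdorff-dimension bound $\dim(\partial B)<d-\delta$, but that step needs an upper box-counting dimension bound. Restricting to the scales $r_n$ would not rescue the Hausdorff version either. For example, $\partial B=\{0\}\cup\{1/\log m: m\geq 2\}$, for $B=\{0\}\cup\bigcup_{j\geq 1}[1/\log(2j+1),1/\log(2j)]\subset\R$, has Hausdorff dimension $0$ but satisfies $\kappa(\partial B,r)\geq r^{-1+o(1)}$ at every small scale.
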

\begin{proof}
	We apply Lemma \ref{l:Sn},
	taking $A_{r} := \partial B$
	for all $r >0$.  Choose $\delta \in (0,1)$ such that
	$\dim(\partial B) < d- \delta$, and
	$\eps >0$ with $\eps < \delta f_0/d$. Then $\kappa (\partial B)
	= O(r^{\delta -d})$, and $\mu(B(x,s)) \geq (f_0 - \eps)
	\theta s^d$
	for all $x \in \partial B$ and all
	small enough $s >0$. Therefore by taking
	$a = (f_0 - \eps) \theta$ and $b= d-  \delta$
	in Lemma \ref{l:Sn} we have almost surely that
	$$
	\limsup_{n \to \infty}
	\Big( \frac{n \theta  R_{n,k}(\partial B)^d }{\log n} \Big)  \leq 
	\frac{ 1- \delta/d}{f_0 - \eps} 
	= \frac{1}{f_0}\Big(\frac{1- \delta/d}{1 - \eps/f_0} \Big)
	< \frac{1}{f_0}
	.
	$$
	Comparing with \eqref{0315a} from Lemma \ref{lem1}, we may deduce that
	$R_{n,k}(\partial B) < R_{n,k}(B) =: R_{n,k}$ for all large enough $n$,
	almost surely. But if $R_{n,k}(\partial B) < R_{n,k}$,
	then $\cW_{n,k} \cap \partial B = \emptyset$,
	and the result follows.
\end{proof}

\begin{proof}[Proof of Theorem \ref{t:record}]
	Fix $k \in \N$.
	Set $I_k := 1$, and for each $n \geq k+1$,
	define the random variable 
	$I_n:= {\bf 1}_{\{R_{n,k} < R_{n-1,k}\}}$
	(so that $N_{n,k} = \sum_{i = k}^n I_i$).
	For each $n \geq k$,
	set $U_n := {\bf 1}_{ \{|\cW_{n,k}|=1\}}$, and
	$Y_n:= \mu(B(W_{n,k},R_{n,k}))$, where $W_{n,k} $ was defined
	just after \eqref{e:cWdef}.
	For each $n \in \N$, let $\F_n := \sigma(X_1,\ldots,X_n)$,
	  the $\sigma$-algebra generated by $(X_1,\ldots,X_n)$.

  Then, for each $n \geq k$, the random variables $I_n $, $U_n $ and
	$Y_n$ are  $\F_n$-measurable.
Also, for each $n \geq k+1$, we claim that almost surely
	\begin{align}
		\E [ I_n U_{n-1} | \F_{n-1} ] = Y_{n-1} U_{n-1}. 
		\label{0116c2}
	\end{align} 
	This is because if $U_{n-1} =1$, the
maximum radius open ball centred in $B$ and  
containing fewer than $k$
	points of $\X_{n-1}$ is unique, 
 and in this case $I_n = 1$ if and
only if the next point $X_n$ lands in this ball. Moreover,
	$\E[ I_n | \F_{n-1} ] \leq Y_{n-1} $ a.s.
	because
	$I_n \leq {\bf 1}_{\{X_n \in B(W_{n-1,k},R_{n-1,k}) \}}$.

	By Proposition \ref{lemWn}
	and Lemma \ref{lemEn}, we have that  $\sum_{n=k}^\infty (1-U_n)
	< \infty$, almost surely.
	By \eqref{0315a} from Lemma \ref{lem1}, Lemma \ref{lemWn2}, and the assumed continuity of
$f$ we have that, almost surely, $Y_n \sim (\log n/n)$ as $n \to \infty$.
	Also by Lemma \ref{meanlem} the
	random variables $(nY_n/\log n, n \geq k)$
are uniformly integrable.
	
	Thus all the conditions for
	Lemma \ref{l:genrec} apply here, and
	we  obtain the result using that lemma with $c=1$.
\end{proof}

\begin{lemm}
	\label{l:tWinint}
	Suppose $\partial A \in C^{1,1}$ or $A$ is convex.
	Let $k \in \N$.
	Then
	$\tcW_{n,k } \subset (A^{(R_{n,k})})^o  $ 
	for all but finitely many $n$,
	almost surely.
\end{lemm}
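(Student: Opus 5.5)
I read the statement with $\tR_{n,k}$ in place of $R_{n,k}$, since $\tcW_{n,k}$ is defined relative to $A^{(\tR_{n,k})}$. The plan is to show that, almost surely for large $n$, no location near the inner boundary of the $r$-interior can be a maximal $k$-hole. The reason is that the region near $\partial A$ has covering number only $O(r^{1-d})$, so Lemma \ref{l:Sn} gives a threshold for it with constant $(d-1)/d$ times the one for the whole of $A$. This is then compared with the exact asymptotics \eqref{0315b} of Lemma \ref{lem1}.

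First, a deterministic observation. If $x \in A^{(r)}$ and $\dist(x,\partial A) > r$, then $x \in A^o$, and $B(x',r) \subset A$ for all $x'$ in a small ball around $x$. Hence $x \in (A^{(r)})^o$. So any $x \in \tcW_{n,k} \setminus (A^{(\tR_{n,k})})^o$ satisfies $\dist(x,\partial A) \le \tR_{n,k}$, and also $k$-$\dist(x,\X_n) \ge \tR_{n,k}$.

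Next, fix $\eta>0$ and $\eps>0$ small enough that $(1+\eta)^d(d-1)/d < (1-\eps/f_0)$. Here $f_0 = \inf_A f$, which is positive by continuity and compactness. For $r>0$ set
$$
A_r := \{x \in A^{(r)} : \dist(x,\partial A) \le (1+\eta) r\},
$$
and $S_{n,k} := \inf\{r: A_r \subset F_{n,k,r}\}$. For $x \in A_r$ and $s<r$ we have $B(x,s)\subset A$, so for small $r$ we get $\mu(B(x,s)) \ge (f_0-\eps)\theta s^d$. Also $A_r \subset A \setminus A^{((1+\eta+1) r)}$ up to the obvious inclusion, so Lemma \ref{l:Covbdy}(ii) gives $\kappa(A_r,r) = O(r^{1-d})$. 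This covers both the $C^{1,1}$ and the convex cases. Lemma \ref{l:Sn} with $a=(f_0-\eps)\theta$ and $b=d-1$ then yields, almost surely,
$$
\limsup_n n S_{n,k}^d/\log n \le (d-1)/(d\theta(f_0-\eps)).
$$
By \eqref{0315b}, this is eventually smaller than $n(\tR_{n,k}/(1+\eta))^d/\log n$, so almost surely $S_{n,k} < \tR_{n,k}/(1+\eta)$ for all large $n$.

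The remaining step, which I expect to be the main point of care, is linking a bad location to $S_{n,k}$. Suppose $x \in \tcW_{n,k}$ with $\dist(x,\partial A)\le \tR_{n,k}$, and let $r \in (\tR_{n,k}/(1+\eta), \tR_{n,k})$. Then $x \in A^{(\tR_{n,k})}\subset A^{(r)}$ and $\dist(x,\partial A) \le (1+\eta)r$, so $x\in A_r$. Moreover $\X_n(B(x,r)) \le \X_n(B(x,\tR_{n,k})^o) < k$, since $k$-$\dist(x,\X_n)\ge \tR_{n,k}$. Hence $x \notin F_{n,k,r}$. This holds for every such $r$. Using the closed-ball versus open-ball subtlety in this way, and taking $r$ strictly below $\tR_{n,k}$, we conclude $S_{n,k} \ge \tR_{n,k}/(1+\eta)$. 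This contradicts the previous paragraph for large $n$, which proves the lemma.

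For $d=1$ the same argument works with $b=0$.
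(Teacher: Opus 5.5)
\textbf{Verdict: there is a genuine gap in the final linking step; the overall strategy is sound and the gap is fixable.}

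Your reading of $R_{n,k}$ as $\tR_{n,k}$ is right. Your set-up matches the paper's proof: a thin shell $A_r$ near $\partial A$ with $\kappa(A_r,r)=O(r^{1-d})$, Lemma \ref{l:Sn} with $b=d-1$, and comparison with \eqref{0315b}. The paper uses the shell $A^{(r)}\setminus A^{(3r)}$.

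\textbf{The gap.} You show that $A_r\not\subset F_{n,k,r}$ for every $r\in(\tR_{n,k}/(1+\eta),\tR_{n,k})$, and from this you conclude $S_{n,k}\ge \tR_{n,k}/(1+\eta)$. That inference would need the set $\{r: A_r\subset F_{n,k,r}\}$ to be closed upwards. But your $A_r=\{x: r\le \dist(x,\partial A)\le (1+\eta)r\}$ is a moving shell and is not monotone in $r$. The event you must exclude, $S_{n,k}<\tR_{n,k}/(1+\eta)$, says that some $r<\tR_{n,k}/(1+\eta)$ has $A_r\subset F_{n,k,r}$. For such $r$, your bad location $x$ has $\dist(x,\partial A)=\tR_{n,k}>(1+\eta)r$, so it does not lie in $A_r$ at all. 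The range of $r$ you handle is disjoint from the range that matters, so the argument as written gives no contradiction.

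\textbf{The fix.} The missing idea is the inward-transport step used in the second case ($s\le R/2$) of the paper's proof. Take $r<\tR_{n,k}/(1+\eta)$ with $A_r\subset F_{n,k,r}$, and $x\in\tcW_{n,k}$ with $\dist(x,\partial A)=\tR_{n,k}$.
\begin{itemize}
\item Let $z$ be a nearest point of $\partial A$ to $x$, and let $y\in[x,z]$ with $\|y-z\|=(1+\eta)r$.
\item Then $\dist(y,\partial A)=(1+\eta)r$, so $y\in A_r\subset F_{n,k,r}$.
\item Also $B(y,r)\subset B(x,\tR_{n,k}-\eta r)$.
\item Hence $k$-$\dist(x,\X_n)\le \tR_{n,k}-\eta r<\tR_{n,k}$, contradicting $x\in\tcW_{n,k}$.
\end{itemize}
Since you already obtain $S_{n,k}<\tR_{n,k}/(1+\eta)$ eventually, this step alone finishes the proof, and your direct argument for $r\in(\tR_{n,k}/(1+\eta),\tR_{n,k})$ becomes unnecessary. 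Alternatively, proving only $S_{n,k}<\tR_{n,k}$ (no $(1+\eta)^d$ factor needed) and combining both cases also works; this is what the paper does.

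\textbf{Minor point.} $f_0(A)>0$ is a standing assumption; it does not follow from continuity and compactness.
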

\begin{proof}
	We shall apply Lemma
	\ref{l:Sn}, taking $A_r := A^{(r)} \setminus A^{(3r)}$.
	By Lemma \ref{l:Covbdy}
	we have $\kappa(A^{(r)} \setminus A^{(3r)},r ) = O(r^{1-d}) $
	as $ r \downarrow 0$,
	and for all $r >0$, all $x \in 
	A^{(r)} \setminus A^{(3r)}$ and all $s \in (0,r)$, we have
	$\mu(B(x,s)) \geq f_0 
	\theta s^d$.

	For this proof set
	$S_{n,k}:= \inf \{r>0: A^{(r)} \setminus A^{(3r)} \subset
	F_{n,k,r}\}$. 
	 Applying Lemma \ref{l:Sn} with 
	 $a= f_0 \theta$ 
	 and 
	 $b = d-1$, 
	 and using also \eqref{0315b},
	 we obtain that almost surely
	 $$
	 \limsup_{n \to \infty} (n S_{n,k}^d/\log n)
	 \leq (1-1/d)/(f_0 \theta)
	 < \liminf_{n \to \infty} ( n \tR_{n,k}^d/\log n),
	 $$
	 so that for large enough $n$ we have $S_{n,k} < \tR_{n,k}$.

	 Suppose $S_{n,k} < \tR_{n,k}$. Write $R$
	 for $\tR_{n,k}$. Then there exists $s \in (0, R)$ such that
	 $A^{(s)} \setminus A^{(3s)} \subset F_{n,k,s}$.
	 If $R/2 < s <R$ then
	 $\partial(A^{(R)}) \subset A^{(s)} \setminus A^{(3s)}
	 \subset F_{n,k,s}$,
	 and hence $\tcW_{n,k} \cap \partial (A^{(R)}) = \emptyset$.

	 If instead $s \leq R/2$ then for each $x \in \partial(A^{(R)})$
	 we can find $y \in \partial(A^{(2s)})$
	 with $\|y-x\| = R-2s$
	 (namely, let $z $ be the closest point of $\partial A$ to $x$,
	 and let $y \in [x,z]$ with $\|y-x\| = R-2s$).
	 Then $y \in A^{(s)} \setminus A^{(3s)} \subset F_{n,k,s}$, so
	 by the triangle inequality
	 $x \in F_{n,k,s+(R-2s)} = F_{n,k,R-s}$, so that $x \notin 
	 \tcW_{n,k}$ and again
	 $\tcW_{n,k} \cap \partial (A^{(R)}) = \emptyset$.
\end{proof}
\begin{lemm}
	\label{l:fconv}
	It is the case that $f(\tW_{n,k}) \to f_0 := f_0(A)$
	as $n \to \infty$.
\end{lemm}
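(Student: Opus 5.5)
The plan is to mimic the proof of Lemma \ref{lemWn2}, now for the maximal $k$-spacing, with $f_0 = f_0(A) = \inf_{x\in A} f(x)$. I assume the hypotheses of Theorem \ref{Thm2} ($\partial A \in C^{1,1}$ or $A$ convex) are in force, and I read the claim as holding almost surely. Fix $\eps>0$ and set
\[
A_\eps := A \cap f^{-1}([f_0+\eps,\infty)).
\]
If $A_\eps=\emptyset$ there is nothing to prove, so assume $A_\eps \neq \emptyset$; it is compact because $f$ is continuous on $A$. The goal is to show that, almost surely, $\tcW_{n,k}\cap A_\eps=\emptyset$ for all large $n$. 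This forces $f(\tW_{n,k})<f_0+\eps$ eventually. Since also $f(\tW_{n,k})\ge f_0$ always, letting $\eps\downarrow 0$ along a countable sequence gives the result.

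The key tool is the localized quantity $\tR_{n,k}(D)$ defined just before Lemma \ref{l:upper}, applied with $D=A_\eps$. By \eqref{e:upper2} of that lemma, almost surely
\[
\limsup_{n\to\infty} n\theta\tR_{n,k}(A_\eps)^d/\log n \le 1/f_0(A_\eps) \le 1/(f_0+\eps).
\]
On the other hand, \eqref{0315b} of Lemma \ref{lem1} gives $n\theta\tR_{n,k}^d/\log n\to 1/f_0$ almost surely. Since $1/(f_0+\eps)<1/f_0$, we get $\tR_{n,k}(A_\eps)<\tR_{n,k}$ for all large $n$, almost surely.

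It remains to convert this into $\tcW_{n,k}\cap A_\eps=\emptyset$, and this is the step needing care. Suppose $x\in\tcW_{n,k}\cap A_\eps$, and write $R:=\tR_{n,k}$. Then $x\in A^{(R)}$ and $k$-$\dist(x,\X_n)\ge R$. Pick $r$ with $\tR_{n,k}(A_\eps)<r<R$, chosen so that $A_\eps\cap A^{(r)}\subset F_{n,k,r}$; such an $r$ exists by the definition of the infimum, although that set is not monotone in $r$, so I will just take some admissible $r$ in the interval. Since $A^{(R)}\subset A^{(r)}$, we have $x\in A_\eps\cap A^{(r)}\subset F_{n,k,r}$. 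Hence $k$-$\dist(x,\X_n)\le r<R$, a contradiction.

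The subtle point is exactly this choice of $r$. The infimum defining $\tR_{n,k}(A_\eps)$ is $<R$, so some admissible $r'<R$ exists with $A_\eps\cap A^{(r')}\subset F_{n,k,r'}$. I only need one such $r'$ with $r'<R$: then $A^{(R)}\subset A^{(r')}$, because $A^{(s)}$ is decreasing in $s$, and the argument above goes through with $r'$ in place of $r$. So monotonicity is not actually needed, and the main obstacle reduces to verifying the inclusion $A^{(R)}\subset A^{(r')}$, which is immediate from the definition of $A^{(s)}$.
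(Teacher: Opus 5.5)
Your proposal is correct and follows essentially the same route as the paper. The paper likewise bounds $\tR_{n,k}(D_\eps)$ via \eqref{e:upper2} of Lemma \ref{l:upper} and compares with \eqref{0315b} to get $\tR_{n,k}(D_\eps)<\tR_{n,k}$ eventually; it then takes a single admissible $s<\tR_{n,k}$ and uses $A^{(\tR_{n,k})}\cap D_\eps\subset A^{(s)}\cap D_\eps\subset F_{n,k,s}$ to conclude $\tcW_{n,k}\cap D_\eps=\emptyset$, which is exactly your handling of the non-monotonicity in $r$.
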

\begin{proof}
	Let $\eps >0$ and set $D_\eps:= 
	f^{-1}([f_0+  \eps,\infty))$.  If $D_\eps \neq \emptyset$,
	then 
	by 
	Lemma \ref{l:upper},
	$$
	\limsup_{n \to \infty} n \theta
	\tR_{n,k}(D_\eps)^d/\log n \leq 1/f_0(D_\eps) \leq 1/(f_0+ \eps),
	$$
	and hence comparing with \eqref{0315b},  for large enough $n$, 
	$\tR_{n,k}(D_\eps) < \tR_{n,k}$ so that there exists
	$s < \tR_{n,k}$ with
	$$
	A^{(\tR_{n,k})} \cap D_\eps \subset A^{(s)} \cap D_\eps
	\subset F_{n,k,s},
	$$
	and hence $\tcW_{n,k} \cap D_\eps = \emptyset$, so that 
	$\tW_{n,k} \notin D_\eps$. 
	The result follows.
\end{proof}

\begin{proof}[Proof of Theorem \ref{Thm2}]
	Fix $k \in \N$.
For $n \geq k+1$, define the random variables $I_n := {\bf 1}_{\{\tR_{n,k}
< \tR_{n-1,k}\}}$ (with $I_k =1$ so that $\tN_n= \sum_{j = k}^n I_j$) and
for $n \geq k$ set
	$U_n := {\bf 1}_{\{|\tcW_{n,k}|=1\}}$ and
 $Y_n := \mu(B(\tW_{n,k},\tR_{n,k}))$.

	By Lemma \ref{l:tWinint}
and Proposition \ref{lemWn}, $\sum_{n = k}^\infty (1- U_n) < \infty$ 
almost surely.
	By Lemma \ref{l:fconv}, \eqref{0315b}
	and the continuity of $f$, we have 
	$n Y_n \sim \log n$ as $n \to \infty$.
	Setting $\cF_n := \sigma(X_1,\ldots,X_n)$,
	we also have \eqref{0116c} and \eqref{e:squeeze} for all $n$.
	The random variables $(nY_n/\log n, n \geq k)$
	are uniformly integrable by \eqref{eqmeanlem2} from Lemma \ref{meanlem}.

	Thus Lemma \ref{l:genrec} is applicable (with $c=1$) and
	gives us the result.
\end{proof}

\section{The case $B=A$: proof of Theorem \ref{Thm3}}
\allco
We first consider the case with $B=A$ and with 
\begin{align}
(2 -2/d)/ f_1 > 1/f_0.
	\label{e:bdydom}
\end{align}
For example, this includes the uniform case if $d \geq 3$.

\begin{lemm}
	\label{l:Wtobdy}
	Suppose $B=A$, $\partial A \in C^{1,1}$
	and \eqref{e:bdydom} holds. Then almost surely as $n \to \infty$,
	$$
	(n/(\log n))^{1/d}
	\dist(W_{n,k},\partial A) \to 0.
	$$
\end{lemm}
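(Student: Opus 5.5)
We follow the pattern of the proofs of Lemmas \ref{lemWn2} and \ref{l:tWinint}: exhibit a region whose own coverage threshold is asymptotically strictly smaller than $R_{n,k}$, so that $\cW_{n,k}$ cannot meet it. Fix $K>0$, set $r_n:=((\log n)/n)^{1/d}$, and let $D_n := A^{(K r_n)}$, the set of locations at distance at least $K r_n$ from $\partial A$. It suffices to show that, almost surely, $\sup_{x \in D_n} k$-$\dist(x,\X_n) < R_{n,k}$ for all large $n$. Then $W_{n,k} \notin D_n$, so $\dist(W_{n,k},\partial A) < K r_n$ eventually. Since $K$ is arbitrary, this gives the claim.

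By \eqref{0315c} and \eqref{e:bdydom}, almost surely $n\theta R_{n,k}^d/\log n \to (2-2/d)/f_1 =: \beta_1$. So we need an almost sure upper bound $\limsup_n n\theta S_{n,k}^d/\log n < \beta_1$, where $S_{n,k} := \inf\{r: A^{(Kr)} \subset F_{n,k,r}\}$. The relevant comparison is that $\sup_{x\in D_n}k$-$\dist(x,\X_n)\le S_{n,k}$ whenever $S_{n,k}\le r_n$, up to a harmless adjustment. We will apply Lemma \ref{l:Sn}, but the naive choice $A_r = A^{(Kr)}$ with $a \approx f_0\theta$ and $b=d$ only yields the bound $1/f_0$. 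That bound is indeed $<\beta_1$ by \eqref{e:bdydom}, but it needs the interior density to be controlled for balls of radius up to $r$, and $B(x,s)\subset A$ requires $s \le Kr$ rather than $s<r$. We handle this by splitting. On $A^{(r)}$ we have $\mu(B(x,s)) \ge (f_0-\eps)\theta s^d$ for $s<r$, so Lemma \ref{l:Sn} with $b=d$ gives the limit bound $1/((f_0-\eps)\theta)$. On the shell $A^{(Kr)}\setminus A^{(r)}$, which is only nonempty when $K>1$, we use Lemma \ref{l:Covbdy}(ii), giving $\kappa = O(r^{1-d})$, together with Lemma \ref{l:bdyballs}(i). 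The latter shows that for $x\in A^{(\eps' s)}$, $\mu(B(x,s)) \ge (1+\eps'/2)(f_1-\eps)\theta s^d/2$. Here $x\in A^{(Kr)}$ lies in $A^{(\eps' s)}$ for all $s<r$ once $\eps'\le K$, so we can take $\eps' = \min(K,1/2)$. Lemma \ref{l:Sn} with $b=d-1$ and $a=(1+\eps'/2)(f_1-\eps)\theta/2$ then gives the limit bound $(2-2/d)/((1+\eps'/2)(f_1-\eps)\theta)$. Choosing $\eps$ small makes this strictly less than $\beta_1/\theta\cdot\theta$, that is, strictly below $\beta_1$ after multiplying by $\theta$.

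Combining the two pieces via $S_{n,k}\le\max$ of the two thresholds, we get $\limsup n\theta S_{n,k}^d/\log n < \beta_1$ almost surely. Hence $S_{n,k} < R_{n,k}$ eventually. Finally, if $s := S_{n,k} < R_{n,k}$ and $s \le r_n$ (which holds eventually since $S_{n,k}=O(r_n)$), then every $x \in A^{(K r_n)} \subset A^{(Ks)}$ satisfies $k$-$\dist(x,\X_n) \le s < R_{n,k}$. So $x\notin \cW_{n,k}$. Here we use that $A^{(t)}$ is decreasing in $t$. This closes the argument.

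The main obstacle is the shell estimate. A point at distance between $r$ and $Kr$ from $\partial A$ behaves like a boundary point with a margin, and the gain factor $1+\eps'/2$ from Lemma \ref{l:bdyballs}(i) is exactly what pushes the threshold strictly below the boundary rate $\beta_1$. Matching the parameters of Lemma \ref{l:bdyballs}(i), in particular the requirement $x\in A^{(\eps' s)}$ for all $s<r$, to the hypotheses of Lemma \ref{l:Sn} needs some care. Alternatively, one can interpolate between the regimes of Lemma \ref{l:bdyballs}(i) and (ii).
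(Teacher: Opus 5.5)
Your plan follows the paper's proof. The paper takes $K=2\eps$ and the same $S_{n,k}$. It splits $A^{(2\eps r)}$ the same way: into $A^{(r)}$, with $a=f_0\theta$ and $b=d$, and the shell $A^{(2\eps r)}\setminus A^{(r)}$, with $b=d-1$ and the gain factor from Lemma~\ref{l:bdyballs}(i). But your step combining the two pieces does not work as written.

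You apply Lemma~\ref{l:Sn} to each family separately, which bounds $S^{(1)}_{n,k}:=\inf\{r: A^{(r)}\subset F_{n,k,r}\}$ and $S^{(2)}_{n,k}:=\inf\{r: A^{(Kr)}\setminus A^{(r)}\subset F_{n,k,r}\}$. You then assert $S_{n,k}\le\max(S^{(1)}_{n,k},S^{(2)}_{n,k})$. That inequality needs both conditions to hold at a common radius, and nothing gives you this. The condition defining $S^{(1)}_{n,k}$ is monotone in $r$, but the shell condition is not. As $r$ grows, the shell moves inward and picks up locations at distance in $[r',r)$ from $\partial A$ that lay in neither piece at the smaller radius $r'$.

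For instance, take a single location at distance $t$ from $\partial A$ with $k$-$\dist$ equal to $\rho>t$. It forces $S_{n,k}\ge\min(\rho,t/K)$. Yet it only forces $S^{(1)}_{n,k}\ge t$, and it does not obstruct the shell condition at radii below $t$. Lemma~\ref{l:Sn} controls only the infima, so it cannot be combined this way.

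The fix, which is what the paper does, is to work one level down:
\begin{itemize}
\item Pick $\alpha$ strictly between $\max\{1/((f_0-\eps)\theta),\,(2-2/d)/((1+\eps'/2)(f_1-\eps)\theta)\}$ and $(2-2/d)/(f_1\theta)$.
\item Apply Lemma~\ref{lemmeta} to both families at the same radius $r_n=(\alpha(\log n)/n)^{1/d}$.
\item Let $\delta''$ be the smaller of the two exponents (legitimate since $F_{n,j,r}$ decreases in $j$). A union bound then gives $\Pr[S_{n,\lfloor\delta''\log n\rfloor}>r_n]=O(n^{-\delta''})$.
\item Only then apply Lemma~\ref{lemtrick} to $S_{n,k}$ itself, which is nonincreasing in $n$ and nondecreasing in $k$.
\end{itemize}

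There are also two smaller slips. First, the shell $A^{(Kr)}\setminus A^{(r)}$ is nonempty only when $K<1$, not $K>1$, since $A^{(t)}$ decreases in $t$. $K<1$ is exactly the regime you need, and your choice $\eps'=\min(K,1/2)$ handles it, so this is only a misstatement.

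Second, $S_{n,k}=O(r_n)$ does not give $S_{n,k}\le r_n$ for $r_n=((\log n)/n)^{1/d}$. What you have is $\limsup_n n\theta S_{n,k}^d/\log n<(2-2/d)/f_1$, which can exceed $\theta$. The paper avoids this by stating the conclusion relative to $R_{n,k}$. If $s\in(0,R_{n,k})$ satisfies $A^{(Ks)}\subset F_{n,k,s}$, then $A^{(KR_{n,k})}\subset A^{(Ks)}\subset F_{n,k,s}$. Hence $W_{n,k}\notin A^{(KR_{n,k})}$ and $\dist(W_{n,k},\partial A)\le KR_{n,k}$, while $R_{n,k}=O(((\log n)/n)^{1/d})$ almost surely by \eqref{0315c}.
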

\begin{proof}
	Let $\eps \in (0,\frac12)$. By Lemma \ref{l:bdyballs}(i)
	there exists $\delta >0$
	such that for all $r \in (0,\delta)$
	and all $x \in A^{(2 \eps r)}$
	we have $\lambda_d(B(x,r)) \geq (1+\eps) \theta r^d/2$.
	If $0 < s < r < \delta$, then $A^{(2 \eps r)} \subset A^{(2 \eps s)}$
	so for $x \in A^{(2 \eps r)}$ we have
	$\lambda_d(B(x,s)) \geq (1+\eps) \theta s^d/2$.

	Choose $f_1^- \in (0,f_1)$ such that $(1+ \eps) f_1^- > f_1$,
	and then (using \eqref{e:bdydom})
	choose $\alpha < (2-2/d)/(f_1 \theta)$ such that
	$\alpha \theta (1+ \eps) f_1^- > 2- 2/d$
	and $\alpha > 1/(f_0 \theta)$. Then set 
	$r_n = (\alpha (\log n)/n)^{1/d}$.

	We shall apply Lemma \ref{lemmeta} taking 
	 $A_r = A^{(2 \eps r)} \setminus A^{(r)}$. 
	For all  sufficiently small $r >0$  we have
	$$
	\mu(B(x,s)) > (1+ \eps) f_1^- \theta s^d/2,  ~~~~ x \in
	 A^{(2 \eps r)} \setminus A^{(r)}, ~~s \in (0,r).  
	$$
	Also by Lemma \ref{l:Covbdy}, 
	$\kappa (A^{(2 \eps r)} \setminus A^{(r)},r)
	\leq \kappa(A \setminus A^{(r)},r) 
	= O(r^{1-d})$
	as $r \downarrow 0$.  By Lemma \ref{lemmeta} with
	$a= (1+ \eps) f_1^- \theta/2$
	and $b=d-1$  so that $b/(ad) = (2-2/d) / ((1+\eps) f_1^- \theta)
	< \alpha$, there exists $\delta' >0$
	such that
	\begin{align}
		\Pr[ \{ A^{(2 \eps r_n)} \setminus A^{(r_n)}
	\subset F_{n,\lfloor \delta' \log n \rfloor, r_n} \}^c]
	= O (n^{-\delta'}) ~~ {\rm as}~~ n \to \infty.
		\label{e:covshell}
	\end{align}

	Also $\kappa(A^{(r)},r) = O(r^{-d})$ as $r \downarrow 0$ and
	$\mu(B(x,r)) \geq f_0 \theta r^d$ for all $x \in A^{(r)}$ so that
	applying  Lemma \ref{lemmeta} to $A^{(r)}$, now
	taking $a = f_0 \theta$ and $b = d$ so
	that $b/(ad) = 1/ (f_0 \theta) < \alpha$, we can find
	 $\delta'' \in(0,\delta')$ such that
	\begin{align}
		\Pr[  \{ A^{(r_n)}
	\subset F_{n,\lfloor \delta'' \log n \rfloor, r_n} \}^c]
	= O (n^{-\delta''}) ~~ {\rm as}~~ n \to \infty.
		\label{e:covint}
	\end{align}
	Let $S_{n,k}:= \inf \{r >0:A^{(2 \eps r)}
	\subset F_{n,k,r} \}$. This is nonincreasing in $n$ and nondecreasing
	in $k$, and by \eqref{e:covshell}, \eqref{e:covint} 
	and the union bound
	\begin{align*}
	\Pr[  n  S_{n,\lfloor \delta'' \log n\rfloor}^d/\log n > \alpha ] 
	= \Pr[ S_{n,\lfloor \delta'' \log n\rfloor}  > r_n] \leq
		\Pr[ \{A^{(2 \eps 
		r_n)} \subset F_{n,\lfloor \delta'' \log n\rfloor,r_n} \}^c]
		\\
	= O(n^{-\delta''}),
		\end{align*}
	so by Lemma \ref{lemtrick}, with probability 1 we have
	$\limsup_{ n \to \infty} n  S_{n,k}^d/\log n \leq \alpha
	$
	so that by \eqref{0315c} and \eqref{e:bdydom}, 
	$S_{n,k} < R_{n,k}$ for all large enough $n$. 
	 
	 But if $S_{n,k} < R_{n,k}$ then there exists $s \in (0, R_{n,k})
	 $ with $A^{(2 \eps R_{n,k})} \subset A^{(2 \eps s)} \subset F_{n,k,s}$,
	 so that
%
$W_{n,k} \notin A^{(2 \eps r_n)}$.
	Since $\eps$ can be arbitrarily small, the result follows.
\end{proof}

\begin{lemm}
	\label{l:ftof1}
	Suppose $B=A$, $\partial A \in C^{1,1}$,
	and \eqref{e:bdydom} holds.
	Then
	$f(W_{n,k}) \to f_1$ as $n \to \infty$, almost surely.
\end{lemm}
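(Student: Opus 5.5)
We follow the pattern of Lemmas \ref{lemWn2} and \ref{l:fconv}: we show that the furthest location $W_{n,k}$ eventually avoids every region where $f$ exceeds $f_1$ near the boundary, and also avoids the interior. Fix $\eps>0$. By Lemma \ref{l:Wtobdy}, $\dist(W_{n,k},\partial A)\to 0$ almost surely. By uniform continuity of $f$ on the compact set $A$, there is $\eta>0$ such that $f(x)\ge f_1-\eps$ whenever $\dist(x,\partial A)\le\eta$. Hence $\liminf f(W_{n,k})\ge f_1-\eps$ almost surely.

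For the upper bound, consider the compact set
$$D_\eps:=\{x\in A:\dist(x,\partial A)\le \eta'\}\cap f^{-1}([f_1+\eps,\infty)).$$
We take $\eta'>0$ small enough that, by continuity, $f\ge f_1+\eps/2$ on $\partial A\cap B(x,\eta')$ for $x\in D_\eps$. In particular $D_\eps\cap\partial A\subset f^{-1}([f_1+\eps/2,\infty))$, so $f_1(D_\eps)\ge f_1+\eps/2$. Also $f_0(D_\eps)\ge f_0$, trivially.

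Applying \eqref{e:upper1} of Lemma \ref{l:upper} with $D=D_\eps$ gives, almost surely,
$$\limsup_{n\to\infty} n\theta R_{n,k}(D_\eps)^d/\log n\le \max\bigl(1/f_0,(2-2/d)/(f_1+\eps/2)\bigr).$$
By \eqref{e:bdydom}, for small $\eps$ the right side is strictly less than $(2-2/d)/f_1$. By \eqref{0315c}, $(2-2/d)/f_1$ is the almost sure limit of $n\theta R_{n,k}^d/\log n$. Hence $R_{n,k}(D_\eps)<R_{n,k}$ for all large $n$. This gives $\sup_{x\in D_\eps}k\text{-}\dist(x,\X_n)<R_{n,k}$, so $W_{n,k}\notin D_\eps$ eventually. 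If $D_\eps=\emptyset$ there is nothing to prove.

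Combining the two steps: eventually $\dist(W_{n,k},\partial A)\le\eta'$, by Lemma \ref{l:Wtobdy}, and $W_{n,k}\notin D_\eps$. Therefore $f(W_{n,k})<f_1+\eps$. Letting $\eps\downarrow0$ along a countable sequence gives $f(W_{n,k})\to f_1$ almost surely.

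The main obstacle is choosing $\eta'$ so that $D_\eps$ is compact and the bound $f_1(D_\eps)>f_1$ is strict. This is handled by intersecting $f^{-1}([f_1+\eps,\infty))$ with a thin closed boundary shell, and using uniform continuity to control $f$ on $D_\eps\cap\partial A$; in fact $f\ge f_1+\eps$ holds there directly. We must also make sure that Lemma \ref{l:upper} applies to this $D$: it only needs $D$ compact with $f_0(D)>0$, which holds since $f_0>0$.
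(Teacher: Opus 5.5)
Your proof is correct and follows the paper's argument: apply \eqref{e:upper1} of Lemma \ref{l:upper} to a superlevel set of $f$, compare with \eqref{0315c} to exclude $W_{n,k}$ from that set eventually, and use Lemma \ref{l:Wtobdy} with continuity for the lower bound. The boundary shell and the choice of $\eta'$ are unnecessary. The paper takes $D=f^{-1}([f_1+\eps,\infty))$ directly, notes $f_0(D),f_1(D)\ge f_1+\eps$ and $2-2/d\ge 1$ (since \eqref{e:bdydom} forces $d\ge 2$), and so gets the bound $(2-2/d)/(f_1+\eps)<(2-2/d)/f_1$.
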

\begin{proof}
	Let $\eps > 0$.
	Set $D := f^{-1}([f_1+ \eps, \infty))$,
	and suppose $D \neq \emptyset$.
	Then 
	by Lemma \ref{l:upper},
	$$
	\limsup_{n \to \infty} \Big(\frac{n \theta R_{n,k}(D)^d}{\log n} \Big)
	\leq \max \Big( \frac{1}{f_1 +  \eps},
	\frac{2-2/d}{f_1 +  \eps} \Big)
	= \frac{2-2/d}{f_1+  \eps},
	$$
	where the second inequality is because the condition
	\eqref{e:bdydom} implies $d \geq 2$.

	Therefore using \eqref{0315c} and \eqref{e:bdydom}, 
	almost surely for $n$ large enough we have
	$R_{n,k}(D) < R_{n,k}$, and hence
	$W_{n,k} \notin D$ (this conclusion also holds
	if $D = \emptyset$). Therefore almost surely
	$\limsup_{n \to \infty} f(W_{n,k}) \leq f_1$,
	but also $\liminf_{n \to \infty} f(W_{n,k}) \geq f_1$ 
	by Lemma \ref{l:Wtobdy} and the continuity of $f$,
	and hence the result.
\end{proof}
We now turn to the case where $B=A$ and
\begin{align}
(2 -2/d)/ f_1 < 1/f_0.
	\label{e:intdom}
\end{align}
For example, the  case with $d=1$ is included in this case.
\begin{lemm}
	\label{l:Wtoint}
	Suppose $B=A$, $\partial A \in C^{1,1}$
	and \eqref{e:intdom} holds. Then 
	almost surely,
	\begin{align}
		W_{n,k} \in A^{(R_{n,k})}
	~~~~~~\mbox{\rm for all but finitely many } n.
		\label{e:Wint}
	\end{align}
\end{lemm}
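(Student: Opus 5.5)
Under \eqref{e:intdom} the largest hole should be centred at distance at least $R_{n,k}$ from $\partial A$. We show that the boundary layer $A \setminus A^{(R_{n,k})}$ is eventually $k$-covered at a radius strictly smaller than $R_{n,k}$. By \eqref{0315c} and \eqref{e:intdom}, almost surely $n\theta R_{n,k}^d/\log n \to 1/f_0$. We therefore fix $\alpha$ with
$$
\frac{2-2/d}{f_1 \theta} < \alpha < \frac{1}{f_0\theta},
$$
and compare $R_{n,k}$ with the threshold for covering a boundary region of width proportional to $r$.

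\textbf{Step 1 (covering the layer).} Set $A_r := A \setminus A^{(r)}$, and let
$$
S_{n,k} := \inf\{r>0 : A \setminus A^{(r)} \subset F_{n,k,r}\}.
$$
By Lemma \ref{l:Covbdy}(ii) with $K=1$, $\kappa(A_r,r)=O(r^{1-d})$. By Lemma \ref{l:bdyballs}(iii) and the continuity of $f$ near $\partial A$, for any $\eps>0$, all small $r$, all $x\in A_r$ and all $s\in(0,r)$, we have
$$
\mu(B(x,s)) \geq \tfrac12 (f_1-\eps)\theta s^d.
$$
Here we use that points of $A_r$ lie within $r$ of $\partial A$, so $f(x)\ge f_1-\eps$ there. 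Applying Lemma \ref{l:Sn} with $a=(f_1-\eps)\theta/2$ and $b=d-1$ gives, almost surely,
$$
\limsup_{n\to\infty} \frac{n S_{n,k}^d}{\log n} \leq \frac{2-2/d}{(f_1-\eps)\theta}.
$$
Choosing $\eps$ small enough that this bound is below $\alpha$, and comparing with \eqref{0315c}, we get $S_{n,k}<R_{n,k}$ for all large $n$, almost surely. The monotonicity of $S_{n,k}$ in $n$ and $k$ required by Lemma \ref{l:Sn} holds automatically, as noted in its proof.

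\textbf{Step 2 (deterministic conclusion).} Suppose $S_{n,k}<R_{n,k}$. Then there is $s<R_{n,k}$ with $A\setminus A^{(s)} \subset F_{n,k,s}$. Every $x \in A\setminus A^{(R_{n,k})}$ lies either in $A\setminus A^{(s)}$ or in $A^{(s)}\setminus A^{(R_{n,k})}$.

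In the first case $k$-$\dist(x,\X_n)\le s<R_{n,k}$, so $x\notin\cW_{n,k}$.

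In the second case we use the triangle-inequality trick from the proof of Lemma \ref{l:tWinint}. Let $z$ be a nearest point of $\partial A$ to $x$, and take $y\in[x,z]$ close to $z$ with $y \in A\setminus A^{(s)}$; for instance take $\dist(y,\partial A) = s/2$, so that $\|x-y\|=\dist(x,\partial A)-s/2<R_{n,k}-s/2$. Then $k$-$\dist(x,\X_n) \le s+\|x-y\|$. This yields only $k$-$\dist(x,\X_n) < R_{n,k}+s/2$, which is not strong enough.

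To fix this we cover a wider layer instead. Take $A_r:=A\setminus A^{(Kr)}$ with $K$ large, which is still allowed by Lemma \ref{l:Covbdy}(ii). If $A\setminus A^{(Ks)}\subset F_{n,k,s}$ with $Ks \ge R_{n,k}$, then the whole of $A\setminus A^{(R_{n,k})}$ lies in $F_{n,k,s}$ directly, so no triangle argument is needed.

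Since $S_{n,k}\asymp R_{n,k}$, any fixed $K\ge 2$ works once $S_{n,k}<R_{n,k}$ with room to spare. Concretely, we need $s\in(R_{n,k}/K, R_{n,k})$, and such an $s$ exists whenever $S_{n,k}<R_{n,k}$, because the defining set of $r$ values is an interval upward. The upward-interval property is the one point to check: $A\setminus A^{(Kr)}$ grows with $r$ but so does $F_{n,k,r}$. To avoid relying on it, we instead define $S_{n,k}$ with the fixed target set $A\setminus A^{(K r)}$ and use monotonicity along a discretised scale; alternatively we bound $\Pr[A\setminus A^{(K r_n)}\not\subset F_{n,\lfloor\delta\log n\rfloor,r_n/ K'}]$ directly via Lemma \ref{lemmeta} and Lemma \ref{lemtrick}.

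\textbf{Main obstacle.} The main obstacle is this bookkeeping in Step 2: making sure a single radius $s<R_{n,k}$ covers the entire layer of width $R_{n,k}$. Taking $K=2$ in Lemma \ref{l:Covbdy}(ii) together with the interval structure should suffice. Once the layer is covered, every $x \in A \setminus A^{(R_{n,k})}$ has $k$-$\dist(x,\X_n) < R_{n,k}$, so $\cW_{n,k}\subset A^{(R_{n,k})}$ and in particular $W_{n,k} \in A^{(R_{n,k})}$, which is \eqref{e:Wint}.
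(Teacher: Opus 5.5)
Your Step 1 is sound: it is the paper's argument, with the layer $A\setminus A^{(r)}$ in place of $A\setminus A^{(2r)}$. Step 2, however, is not established. This is the deterministic passage from $S_{n,k}<R_{n,k}$ to $\cW_{n,k}\subset A^{(R_{n,k})}$.

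The repair you settle on needs some $s\in[R_{n,k}/K,R_{n,k})$ with $A\setminus A^{(Ks)}\subset F_{n,k,s}$. You get it from the claim that $\{r>0: A\setminus A^{(Kr)}\subset F_{n,k,r}\}$ is upward closed. That claim is false, because the target layer grows at rate $K$ in $r$ while the coverage radius grows only at rate $1$. For example, take $d=1$, $A=[0,1]$, $k=1$, $K=2$ and sample $\{0.1,0.5,0.9\}$. The property holds at $r=0.1$, since the layer $[0,0.2)\cup(0.8,1]$ is covered. It fails at $r=0.12$, since the points of $(0.22,0.24)$ are uncovered.

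Nor can you instead get $S_{n,k}\geq R_{n,k}/K$ from $S_{n,k}\asymp R_{n,k}$, because Lemma \ref{l:Sn} gives only an upper bound on $S_{n,k}$. The discretised-scale and direct-probability alternatives are not carried out. In particular, Lemma \ref{lemtrick} needs an array monotone in $n$, which a target set $A\setminus A^{(Kr_n)}$ depending on $n$ does not directly supply.

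The missing idea is just a better choice of $y$ in your first triangle-inequality attempt. No monotonicity in $r$ is needed, and the single witness $s<R_{n,k}$ coming from $S_{n,k}<R_{n,k}$ suffices. Take $x$ with $s\le\dist(x,\partial A)<R_{n,k}$, and let $z$ be a nearest point of $\partial A$ to $x$. Choose $y\in[x,z]$ with $\|y-z\|=s-\eta$, where $\eta\in(0,s)$.

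Since $z$ is nearest to $x$, we have $\dist(y,\partial A)=s-\eta<s$. Hence $y\in A\setminus A^{(s)}\subset F_{n,k,s}$, and so $k$-$\dist(x,\X_n)\le s+\|x-y\|=\dist(x,\partial A)+\eta<R_{n,k}$ once $\eta$ is small. Choosing depth $s/2$ simply threw away $s/2$.

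The paper does the same thing slightly more cleanly. It covers the wider layer $A\setminus A^{(2r)}$ in Step 1, with the same $a$ and the same $b=d-1$ via Lemma \ref{l:Covbdy}(ii). Then the point $y$ at depth exactly $s$ lies in $A\setminus A^{(2s)}\subset F_{n,k,s}$, giving $k$-$\dist(x,\X_n)\le\|x-z\|<R_{n,k}$ directly.
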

\begin{proof} Let $f_1^- < f_1$ be such that
	$(2-2/d)/f_1^- < 1/f_0$. 

	We shall apply Lemma \ref{l:Sn} taking 
	 $A_r = A \setminus A^{(2r)} $. 
	 By Lemma \ref{l:bdyballs}(iii), for $r$ sufficiently small we have
	\begin{align*}
	\mu(B(x,s)) >  f_1^- \theta s^d/2,  ~~~~ x \in
		A \setminus A^{(2r)}, ~ s \in (0,r].  
		\end{align*}
	Also by Lemma \ref{l:Covbdy} we have
	$ \kappa ( A \setminus A^{(2r)},r  ) = O(r^{1-d}) $
		 as $r \downarrow 0$.
	Thus by Lemma \ref{l:Sn} 
	with $a=  f_1^- \theta/2$ and $b=d-1$, setting
	 $S_{n,k}:= \inf \{r >0: A \setminus A^{(2r)}
	\subset F_{n,k,r} \}$,  with probability 1 we have
	$\limsup_{ n \to \infty} nS_{n,k}^d/\log n \leq 
	(2-2/d) / ( f_1^- \theta) < 1/(f_0 \theta)$, so that 
	using  \eqref{0315c} and \eqref{e:intdom}, we have almost surely that
	for all large enough $n$, $S_{n,k} 
	< R_{n,k}$.

	Suppose $S_{n,k} < R_{n,k}$.
	Then we have for some $s  \in (0, R_{n,k})$
	that $A \setminus A^{(2s)}  \subset F_{n,k,s}$.
	We claim next that $
	\cW_{n,k} \subset 
	A^{(R_{n,k})}.$ 
	To see this, let $x \in A \setminus A^{(R_{n,k})}$.
	If $x \in A \setminus A^{(s)}$ then
	$x \in F_{n,k,s} $ so $k$-$\dist(x,\X_n) \leq s <R_{n,k}$.
	If instead $s \leq \dist(x,\partial A) < R_{n,k}$, let $u$ be the
	closest point of $\partial A$ to $x$, and let
	$y = u + \frac{s}{\|x -u\|} (x-u)$. Then
	$y \in A \setminus A^{(2s)}
	\subset F_{n,k,s}$, but then using the triangle inequality
	we have $x \in F_{n,k,\|x-u\|}$ 
	so that $k$-$\dist(x, \X_n) \leq \|x-u\| < R_{n,k}$.
	Thus in both cases $k$-$\dist(x,\X_n ) < R_{n,k}$ so
	$x \notin \cW_{n,k}$, 
	and the claim follows. This claim gives us \eqref{e:Wint}.
\end{proof}

\begin{lemm}
	\label{l:ftof0}
	Suppose $B=A$, $\partial A \in C^{1,1}$
	and \eqref{e:intdom} holds. Then
	almost surely $f(W_{n,k} ) \to f_0$ as $n \to \infty$.
\end{lemm}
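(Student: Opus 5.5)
The plan is to copy the pattern of Lemmas \ref{lemWn2} and \ref{l:fconv}. Lower bound first: since $W_{n,k} \in A = B$, we always have $f(W_{n,k}) \geq f_0$. So $\liminf_{n\to\infty} f(W_{n,k}) \geq f_0$ holds trivially, and only the upper bound needs work.

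For the upper bound, fix $\eps>0$ and set $D_\eps := A \cap f^{-1}([f_0+\eps,\infty))$. If $D_\eps = \emptyset$ there is nothing to prove, so suppose it is non-empty. By continuity of $f$ it is compact, and $f_0(D_\eps) \geq f_0+\eps$. We also have $f_1(D_\eps) \geq f_1$, since the infimum over the smaller set $D_\eps \cap \partial A$ is at least $f_1$; this holds with the convention $1/\infty = 0$ if the intersection is empty. Applying \eqref{e:upper1} of Lemma \ref{l:upper} to $D=D_\eps$ then gives, almost surely,
\begin{align*}
\limsup_{n\to\infty} \frac{n\theta R_{n,k}(D_\eps)^d}{\log n} \leq \max\Big(\frac{1}{f_0+\eps}, \frac{2-2/d}{f_1}\Big).
\end{align*}
By \eqref{e:intdom} both terms in this maximum are strictly less than $1/f_0$. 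On the other hand \eqref{0315c}, together with \eqref{e:intdom}, gives $n\theta R_{n,k}^d/\log n \to 1/f_0$ almost surely. Hence almost surely $R_{n,k}(D_\eps) < R_{n,k}$ for all large $n$. Here $R_{n,k}(D_\eps) = \sup_{x \in D_\eps} k$-$\dist(x,\X_n)$, so every $x \in D_\eps$ then has $k$-$\dist(x,\X_n) < R_{n,k}$. This gives $\cW_{n,k}\cap D_\eps = \emptyset$, and in particular $W_{n,k}\notin D_\eps$, that is $f(W_{n,k}) < f_0+\eps$.

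Intersecting these almost-sure events over rational $\eps>0$ gives $\limsup_{n\to\infty} f(W_{n,k}) \leq f_0$ almost surely, which combined with the lower bound proves the claim.

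The only delicate point is the boundary term in the maximum. Interior-only bounds are not enough, because $D_\eps$ may meet $\partial A$. That is exactly why we need the inequality $f_1(D_\eps)\geq f_1$ together with the strict inequality \eqref{e:intdom}, so that $(2-2/d)/f_1(D_\eps) < 1/f_0$. Lemma \ref{l:Wtoint} is not needed for this argument.
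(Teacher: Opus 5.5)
Your proof is correct and follows essentially the same route as the paper: apply \eqref{e:upper1} of Lemma \ref{l:upper} to $D = f^{-1}([f_0+\eps,\infty))$, use $f_1(D) \geq f_1$, compare with \eqref{0315c}, and conclude $W_{n,k} \notin D$ for large $n$. The only cosmetic difference is that the paper restricts $\eps$ so that the maximum equals $1/(f_0+\eps)$, whereas you bound each term of the maximum separately by something strictly below $1/f_0$, which works for every $\eps>0$.
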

\begin{proof}
	Using
	\eqref{e:intdom},
	let $\eps >0$ be  such that  $1/(f_0+ \eps) > (2-2/d)/f_1$.
	Define the set $D := f^{-1}([f_0 + \eps ,\infty))$. 
	If $D \neq \emptyset$, then
	by Lemma \ref{l:upper}, 
	\begin{align*}
	\limsup_{n \to \infty}
		n \theta R_{n,k}(D)^d/\log n & 
		\leq \max \Big( \frac{1}{f_0(D)},\frac{
		2-2/d}{f_1(D)} \Big)
		\\ 
		&\leq \max \Big( \frac{1}{f_0+ \eps},\frac{2-2/d}{f_1} \Big)
		= \frac{1}{f_0 + \eps},
	\end{align*}
	and comparing with \eqref{0315c},
	we obtain for all large enough $n$ that $R_{n,k}(D) < R_{n,k}$
	and  $W_{n,k} \notin D$. The result follows.
\end{proof}

\begin{proof}[Proof of Theorem \ref{Thm3}]
	If \eqref{e:bdydom} holds,
	then as $n \to \infty$,
	$n \theta R_{n,k}^d/\log n \to (2-2/d)/f_1$
	by \eqref{0315c}
	and $\dist(W_{n,k}, \partial A)/R_{n,k} \to 0$
	by Lemma \ref{l:Wtobdy}.
	If instead \eqref{e:intdom} holds, then 
	$n \theta R_{n,k}^d/\log n \to 1/f_0$
	by \eqref{0315c}.

	Then using Lemma \ref{l:ftof1} and Lemma
	\ref{l:bdyballs}(ii) in the first case, and using Lemmas \ref{l:Wtoint}
	and \ref{l:ftof0} in the second case,
	we obtain that almost surely
	$$
	\lim_{n \to \infty} \Big( \frac{n \mu(B(W_{n,k},R_{n,k}))}{
		\log n}
	\Big) = \begin{cases}
		(1-1/d) & \mbox{ if } \eqref{e:bdydom} \mbox{ holds} 
		\\
		1  & \mbox{ if } \eqref{e:intdom} \mbox{ holds.}
	\end{cases}
	$$
	
	Now set $I_n := {\bf 1}_{\{R_{n,k} < R_{n-1,k}\}}$, 
	$U_n := {\bf 1}_{\{|\cW_{n,k}|=1\}}$, $Y_n:= \mu(B(W_{n,k},
	R_{n,k}))$, 
	and $\cF_n := \sigma(X_1,\ldots,X_n)$. 
	Then by Proposition \ref{p:uniquefurth}
	and \eqref{0315c}, $\sum_{n=k}^\infty(1-U_n) < \infty $ almost surely.
	Also \eqref{0116c} and  \eqref{e:squeeze} hold as in
	the proof of Theorem \ref{t:record},
	so Lemma \ref{l:genrec} is applicable with $c= 1-1/d$
	if \eqref{e:bdydom} holds and with $c=1$ if \eqref{e:intdom} holds
	(the uniform integrability condition comes from Lemma \ref{meanlem}).
	This gives us \eqref{0720a}, \eqref{0720b} and \eqref{0720c}
	if \eqref{e:bdydom} holds and \eqref{0125a}, \eqref{0125b} and \eqref{0125c}
	if \eqref{e:intdom} holds.
\end{proof}

\section{Related quantities}
\label{s:related}
\allco

To illustrate the wider applicability of our methods, we describe
two further quantities for which they are relevant, without
presenting detailed proofs.

First, consider records for the $k$-coverage threshold $R_{n,k}$
when $B=A$ is a {\em convex  finite polytope} in $\R^d$ (finiteness
means it has finitely many faces). 
Given a convex finite
polytope $A$ in $\R^d$, let $\Phi(A)$ be the class of all faces
of $A$ (counting $A$ itself as a $d$-dimensional face).
Given a face $\varphi \in \Phi(A)$, let 
$D(\varphi)$ denote its dimension (so $0 \leq D(\varphi) \leq d$).
Let $\varphi^o$ denote the relative interior of $\varphi$.
Let ${\cal K} $ be the cone such that every $x \in \varphi^o$
has a neighbourhood $U_x$ such that $A \cap U_x =
(x + {\cal K}_\varphi) \cap U_x$.
Let $\rho_\varphi := \lambda_d({\cal K}_\varphi \cap B(o,1))$,
the so-called {\em angular volume} of face $\varphi$.

Given also a continuous probability density function
$f$ on $A$, let
$f_\varphi := \inf_{x \in \varphi} f(x)$.
Given $j \in \{0,1,\ldots,d\}$ let $\Phi_j$ be the collection
of all faces of $A$ of dimension $j$.
Suppose the maximum
$$
\max_{j \in \{1,\ldots,d\}} \Big( \frac{ j/d}{\max_{\varphi \in \Phi_j}
f_\varphi \rho_\varphi} \Big)
$$
is achieved for a single value of $j$, denoted $j_0$ say. 
By \cite[Theorem 4.2]{MPCov} this maximum is the almost sure
large-$n$ limit
of $n \theta R_{n,k}^d/\log n$. 

Then it
should be possible to prove using the methods of this paper that
in this situation
$N_{n,k} \sim (j_0/(2d)) (\log n)^2$ a.s., and
$\mathbb E[N_{n,k}] \sim (j_0/(2d)) (\log n)^2$, and
moreover
$\nu_{n,k}^{1/\sqrt{n}} \to \exp((2d/j_0)^{1/2})$ 
almost surely
as $n \to \infty$.

Next we consider a different quantity in stochastic geometry,
namely the {\em largest $k$-nearest neighbour link}, which we
denote $L_{k,n}$ and define by
$$
L_{n,k} := \max_{x \in \X_n} k{\mbox -}\dist(x,\X_n \setminus \{x\}).
$$
As before, 
this can be considered as the size of the largest
`hole' in the sample $\X_n$, provided we now 
define a `hole' to be an open ball {\em centred on one of the points of
the sample} $\X_n$ that contains fewer than $k$ other points
of the sample. The largest $k$-nearest neighbour link is of interest
in its own right in stochastic geometry, and also as an asymptotically
sharp bound for the {\em $k$-connectivity threshold}.
See \cite{Poly,PYconnect} for further discussion.

It is not hard to see that the point $X_i \in \X_n$ satisfying
$k$-$\dist(X_i, \X_n \setminus \{X_i\})= L_{n,k}$ is almost surely unique,
and moreover that we have $L_{n+1,k} < L_{n,k}$ if and only if
$X_{n+1}$ lands in the open ball of radius $L_{n,k}$ centred on this point. 
Moreover,  $L_{n,k}$ enjoys the same almost sure large-$n$ asymptotics 
as those for  $R_{n,k}$ (for $B=A$) that we gave in
\eqref{0315c} when $\partial A \in C^{1,1}$; see \cite{LNNL}
(the $C^2$ boundary condition imposed there can be relaxed to a 
$C^{1,1}$ boundary condition). Therefore if we define
$$
N^*_{n,k} := 1 + \sum_{i=k+2}^n {\bf 1}_{\{L_{i,k} < L_{i-1,k}\}},
$$
it should be possible to prove the same asymptotics for
$N_{n,k}^*$ and for $\E[N^*_{n,k}]$ as were given for
$N_{n,k}$ and $\E[N_{n,k}]$
in Theorem \ref{Thm3}. Moreover using results from
\cite{Poly}, It should be possible to show the same asymptotics
for 
$N_{n,k}^*$ and for $\E[N^*_{n,k}]$ as we have proposed for
$N_{n,k}$ and $\E[N_{n,k}]$ in the case where $B=A$ is a
convex finite polytope.

However, we would like to emphasise that $N^*_{n,k}$ is {\em not}
the number of lower records of the sequence $(L_{m,k})_{m \geq k+1}$
up to time $n$
in the conventional sense; we refer to it instead as
the number of {\em pseudo-records}.
Indeed, the sequence $(L_{n,k})_{n \geq k+1}$
is {\em not} monotone non-increasing. Considering just the case $k=1$,
note that if a new point
$X_{n+1}$ arrives outside the covered region $\cup_{i=1}^n B(X_i,L_{n,1})$
then we obtain $L_{n+1,1} > L_{n,1}$, and we would expect that this happens
infinitely often.
It is true that the general
trend of the sequence $(L_{n,k})_{n \geq k+1}$ is downwards, and it
may be that the asymptotics of the true number of records of $L_{n,k}$
are the same as for the number of pseudo-records
$N^*_{n,k}$ but it would need some work and
probably new ideas to
prove this.

\end{document}